\newcommand{\Spec}{\operatorname{Spec}}
\newcommand{\isomto}{{\stackrel{\sim}{\;\longrightarrow\;}}}
\newcommand{\isomt}{{\stackrel{{\scriptscriptstyle{\sim}}}{\;\rightarrow\;}}}
\newcommand{\sma}{{\scriptstyle{\wedge}}}
\renewcommand{\O}{{\mathcal O}}
\renewcommand{\hom}{\operatorname{Hom}}
\newcommand{\real}{{\mathbb R}}
\newcommand{\cplx}{{\mathbb C}}
\newcommand{\Z}{{\mathbb Z}}
\newcommand{\aone}{{\mathbb A}^1}
\newcommand{\pone}{{\mathbb P}^1}
\newcommand{\ga}{{{\mathbb G}_{a}}}
\newcommand{\gm}[1]{{{\mathbf G}_{\mathrm m}^{#1}}}
\newcommand{\ho}[1]{\mathscr{H}({#1})}
\newcommand{\hop}[1]{\mathscr{H}_{\bullet}({#1})}
\newcommand{\bpi}{\bm{\pi}}
\newcommand{\Nis}{\operatorname{Nis}}
\newcommand{\Zar}{\operatorname{Zar}} 
\newcommand{\Sm}{\mathrm{Sm}}
\newcommand{\Spc}{\mathrm{Spc}}
\newcommand{\K}{{{\mathbf K}}}
\newcommand{\hsnis}{\mathscr{H}_s^{\Nis}(k)}
\newcommand{\hspnis}{\mathscr{H}_{s,\bullet}^{\Nis}(k)}
\newcommand{\F}{{\mathcal F}}
\newcommand{\Sing}{\mathrm{Sing}}
\newcommand{\Addresses}{{
  \bigskip
  \footnotesize

  A.~Asok, \textsc{Department of Mathematics, University of Southern California, 3620 S. Vermont Ave. KAP 104,
    Los Angeles, CA 90089-2532, United States;} \textit{E-mail address:} \url{asok@usc.edu}

  \medskip

  B.~Doran, \textsc{Departement Mathematik, Eidgen\"ossische Technische Hochschule, R{\"a}mistrasse 101, 8092 Z\"urich Switzerland;} \textit{E-mail address:} \url{brent.doran@math.ethz.ch}

  \medskip

  J.~Fasel, \textsc{Institut Fourier - UMR 5582, Universit\'e Grenoble Alpes, 100 rue des math\'ematiques BP 74, F-38000 Grenoble;} \textit{E-mail address:} \url{jean.fasel@gmail.com}

}}
\newcounter{intro}
\theoremstyle{plain}
\newtheorem{thm}{Theorem}[subsection]
\newtheorem{lem}[thm]{Lemma}
\newtheorem{cor}[thm]{Corollary}
\newtheorem{prop}[thm]{Proposition}
\newtheorem*{claim*}{Claim}  
\newtheorem{question}[thm]{Question}
\newtheorem{conj}[thm]{Conjecture}
\newtheorem*{thm*}{Theorem}
\newtheorem*{problem*}{Problem}
\newtheorem{thmintro}{Theorem}
\newtheorem{propintro}[thmintro]{Proposition}
\newtheorem{questionintro}[thmintro]{Question}
\theoremstyle{definition}
\theoremstyle{remark}
\newtheorem{rem}[thm]{Remark}
\newtheorem{remintro}[thmintro]{Remark}
\newtheorem{ex}[thm]{Example}
\numberwithin{equation}{section}
\begin{document}
\pagestyle{fancy}
\renewcommand{\sectionmark}[1]{\markright{\thesection\ #1}}
\fancyhead{}
\fancyhead[LO,R]{\bfseries\footnotesize\thepage}
\fancyhead[LE]{\bfseries\footnotesize\rightmark}
\fancyhead[RO]{\bfseries\footnotesize\rightmark}
\chead[]{}
\cfoot[]{}
\setlength{\headheight}{1cm}

\author{Aravind Asok\thanks{Aravind Asok was partially supported by National Science Foundation Award DMS-1254892.} \and Brent Doran\thanks{Brent Doran was partially supported by Swiss National Science Foundation Award 200021\_138071.} \and Jean Fasel}

\title{{\bf Smooth models of motivic spheres\\ and the clutching construction}}
\date{}
\maketitle

\begin{abstract}
We study the representability of motivic spheres by smooth varieties.  We show that certain explicit ``split" quadric hypersurfaces have the $\aone$-homotopy type of motivic spheres over the integers and that the $\aone$-homotopy types of other motivic spheres do not contain smooth schemes as representatives.  We then study some applications of these representability/non-representability results to the construction of new exotic $\aone$-contractible smooth schemes.  Then, we study vector bundles on even dimensional ``split" quadric hypersurfaces by developing an algebro-geometric variant of the classical construction of vector bundles on spheres via clutching functions.
\end{abstract}

\begin{footnotesize}
\setcounter{tocdepth}{1}
\tableofcontents
\end{footnotesize}

\section{Introduction}
This note is concerned with several results about spheres in the Morel-Voevodsky $\aone$-homotopy theory \cite{MV}, also known as motivic spheres.  First, we make precise the idea that (split) smooth affine quadric hypersurfaces are motivic spheres.  Second, we try to better understand the situations in which motivic spheres admit models as smooth schemes.  Finally, we use homotopic ideas in an attempt to understand how to explicitly construct all vector bundles on split smooth affine quadrics.  We now explain our results more precisely.

Given a commutative ring $k$, the Morel-Voevodsky $\aone$-homotopy category $\ho{k}$ is constructed by first enlarging the category $\Sm_k$ of schemes smooth over $\Spec k$ to a category $\Spc_k$ of spaces over $\Spec k$ and then performing a categorical localization.  The category $\Spc_k$ has all small limits and colimits, but the functor $\Sm_k \to \Spc_k$ does not preserve all colimits that exist in $\Sm_k$, e.g., quotients in $\Sm_k$ that exist need not coincide with quotients computed in $\Spc_k$.

The resulting homotopy theory of schemes bears a number of similarities to the classical homotopy category.  For example, there are two objects of $\Spc_k$ that play a role analogous to that played by the circle in classical homotopy theory.  On the one hand, one can view the circle as $I/\{0,1\}$ where $I$ is the unit interval $[0,1]$.  If we view ${\mathbb A}^1$ as analogous to the unit interval, we can form the quotient space ${\mathbb A}^1/\{0,1\}$ (pointed by the image of $\{0,1\}$ in the quotient), and we call this space $S^1_s$.  This quotient does exist as a singular scheme (i.e., a nodal curve).  However, it is not {\em a priori} clear whether $S^1_s$ is isomorphic in $\ho{k}$ to a smooth scheme.  On the other hand, if we emphasize the fact that $S^1$ has a group structure, then we could also think of $\gm{}$ (pointed by $1$) as a version of the circle.  Of course, $\gm{}$ is a smooth $k$-scheme.

General motivic spheres are obtained by taking smash products (this notion makes sense in $\Spc_k$ because of existence of colimits) of copies of $S^1_s$ and $\gm{}$ \cite[\S 3.2]{MV}; by convention, we set $S^0_s$ to be the disjoint union of two copies of $\Spec k$.  While in classical topology spheres are by construction smooth manifolds, the example of $S^1_s$ shows that, in stark contrast, the following question in $\aone$-homotopy theory has no obvious answer.

\begin{questionintro}
\label{questionintro:main}
Which motivic spheres $S^i_s \wedge \gm{\wedge j}$ have the $\aone$-homotopy type of a smooth scheme?
\end{questionintro}

Throughout this paper, we consider the smooth affine quadric hypersurfaces
\begin{equation*}
\begin{split}
&Q_{2m-1} := \Spec k[x_1,\ldots,x_m,y_1,\ldots,y_m]/\langle \sum_i x_iy_i - 1 \rangle, \text{ and } \\
&Q_{2m} := \Spec k[x_1,\ldots,x_m,y_1,\ldots,y_m,z]/\langle \sum_i x_i y_i - z(1 + z)\rangle.
\end{split}
\end{equation*}
One immediately verifies that all of these hypersurfaces are in fact smooth over $\Spec k$.

Note that $Q_0$ is the subscheme of ${\mathbb A}^1$ given by the two points $0,1$ and thus coincides with $S^0_s$.  The hypersurface $Q_{2m-1}$ is well-known to be $\aone$-weakly equivalent to ${\mathbb A}^m \setminus 0$ (by projection onto $x_1,\ldots,x_m$) and the latter is $\aone$-weakly equivalent to the motivic sphere $S^{m-1}_s \wedge \gm{\sma m}$ \cite[Example 2.20]{MV}. Our first result provides an explicit description of the $\aone$-homotopy type of the other family of quadrics, thus answering Question \ref{questionintro:main} in a collection of cases.

\begin{thmintro}[See Theorem \ref{thm:main}]
\label{thmintro:main}
Let $k$ be a commutative unital ring and $n \geq 0$ an integer.  There are explicit $\aone$-weak equivalences
\[
Q_n \sim_{\aone} \begin{cases}
S^{m-1}_s \wedge \gm{\wedge m} & \text{ if } n = 2m-1 \text{ and}\\
S^m_s \wedge \gm{\wedge m} & \text{ if } n  = 2m
\end{cases}
\]
of spaces over $\Spec k$.
\end{thmintro}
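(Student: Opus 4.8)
The plan is to treat the odd and even families separately. For $Q_{2m-1}$ there is essentially nothing new: projecting onto $(x_1,\dots,x_m)$ exhibits $Q_{2m-1}$ as a torsor under a vector bundle over $\A^m\setminus 0$ --- over the standard open $\{x_j\neq 0\}$ one solves for $y_j$ and the remaining coordinates are free --- so $Q_{2m-1}\sim_{\aone}\A^m\setminus 0$, and $\A^m\setminus 0\sim_{\aone}S^{m-1}_s\wedge\gm{\wedge m}$ by \cite[Example 2.20]{MV} as recalled above; the cases $Q_0=S^0_s$ and $Q_1\cong\gm{}$ are immediate from the definitions. The real content is the even case, which I would build as an algebro-geometric suspension of $Q_{2m-1}$ (equivalently of $\A^m\setminus 0$), using the projection $p\colon Q_{2m}\to\A^m$ onto $(x_1,\dots,x_m)$.

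Two features of $p$ drive the argument. First, exactly as in the odd case, over $\A^m\setminus 0$ the morphism $p$ is a torsor under a rank $m$ vector bundle (over $\{x_j\neq 0\}$ the coordinates $z$ and $(y_i)_{i\neq j}$ are free, $y_j$ being determined), so that $p^{-1}(\A^m\setminus 0)\sim_{\aone}\A^m\setminus 0$. Second, the fibre $p^{-1}(0)=\{x=0,\ z(z+1)=0\}$ is the \emph{disjoint} union of the two sheets $S_0=\{x=0,z=0\}$ and $Z=\{x=0,z=-1\}$, each a copy of $\A^m$; moreover $Z$ (and likewise $S_0$) is a smooth closed subscheme of $Q_{2m}$ of codimension $m$, cut out near $Z$ by $x_1,\dots,x_m$ --- since $z+1=z^{-1}\sum_i x_iy_i$ lies in $(x_1,\dots,x_m)$ near $z=-1$ --- so with trivial normal bundle $N_{Z/Q_{2m}}\cong\O_Z^{\oplus m}$.

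Granting that $U:=Q_{2m}\setminus Z$ is $\aone$-contractible (and symmetrically $Q_{2m}\setminus S_0$), one finishes in either of two equivalent ways. Homotopy purity \cite{MV} gives $Q_{2m}/(Q_{2m}\setminus Z)\simeq\mathrm{Th}(N_{Z/Q_{2m}})\simeq\mathrm{Th}(\O^{\oplus m}_{\A^m})\simeq(\A^m)_+\wedge(\pone)^{\wedge m}\simeq(\pone)^{\wedge m}\simeq S^m_s\wedge\gm{\wedge m}$; and the cofibre sequence $(Q_{2m}\setminus Z)_+\to(Q_{2m})_+\to Q_{2m}/(Q_{2m}\setminus Z)$ of the open inclusion $U\hookrightarrow Q_{2m}$ has first term $\simeq S^0$, the map to $(Q_{2m})_+$ being the split inclusion of the rational point $o=(0,\dots,0)$ (which lies in $U$, since there $z=0\neq -1$), so its cofibre is $Q_{2m}$ pointed at $o$; comparing the two gives $Q_{2m}\sim_{\aone}S^m_s\wedge\gm{\wedge m}$. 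Equivalently --- and this is the cleaner packaging --- $Q_{2m}=U\cup U'$ with $U'=Q_{2m}\setminus S_0$ is a Zariski cover with $U\cap U'=p^{-1}(\A^m\setminus 0)\sim_{\aone}\A^m\setminus 0$, so Mayer--Vietoris presents $Q_{2m}$ as the homotopy pushout of $\ast\leftarrow\A^m\setminus 0\to\ast$, i.e.\ the unreduced suspension of $\A^m\setminus 0$; as $\A^m\setminus 0$ carries a rational point, collapsing the arc joining the two cone points through it identifies this with $S^1_s\wedge(\A^m\setminus 0)\simeq S^m_s\wedge\gm{\wedge m}$.

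The step I expect to be the main obstacle is the $\aone$-contractibility of $U=Q_{2m}\setminus Z$. The natural route is to prove that $p|_U\colon U\to\A^m$ is itself an $\aone$-weak equivalence, so that $U\sim_{\aone}\A^m\simeq\ast$: over $\A^m\setminus 0$ this is the first feature above, and the fibre over the origin, having shed the sheet $Z$, is now the single affine space $S_0$. What has to be supplied is the local picture at $0\in\A^m$: one must verify that, Nisnevich-locally on $\A^m$ near the origin, deleting $Z$ genuinely separates the branches $z\approx 0$ and $z\approx -1$, so that the branch $z\approx 0$ furnishes a section over which $p|_U$ is again a torsor under a vector bundle. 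This is precisely where the split form $z(z+1)$ --- a product of two coprime factors, generating the unit ideal along each sheet --- is essential, and I expect making it precise to be the technical heart of the proof.
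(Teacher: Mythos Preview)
Your odd--case argument and your reduction of the even case to the $\aone$-contractibility of $U=Q_{2m}\setminus Z$ (which is precisely the scheme the paper calls $X_{2m}$, with $Z=E_m$) are correct, and this route is exactly the ``alternative proof'' the paper records in Remark~\ref{rem:alternativeproof}: once $X_{2m}$ is known to be $\aone$-contractible, homotopy purity and the explicit trivialization of $\nu_{E_m/Q_{2m}}$ give $Q_{2m}\simeq(\pone)^{\wedge m}$. So the architecture of your proposal matches one of the two approaches in the paper.

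The genuine gap is your proposed mechanism for the contractibility of $U$. You suggest showing that $p|_U\colon X_{2m}\to\A^m$ is an $\aone$-weak equivalence by arguing it is Nisnevich-locally a vector-bundle torsor near $0\in\A^m$. I do not see how to make this work, and the paper gives strong circumstantial evidence against any such ``elementary bundle'' argument: for $m\geq 3$ the authors prove (Corollary~\ref{cor:notaunipotentquotient}, via the excision result Theorem~\ref{thm:unipotentexcision}) that $X_{2m}$ is \emph{not} a quotient of affine space by a free unipotent action, so in particular no global $\ga^r$- or affine-bundle presentation exists; and in the acknowledgements the authors explicitly say their original ``elementary geometric argument'' for $Q_{2n}$ with $n\geq 3$ contained a gap. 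Note also that for $m\geq 2$ the scheme $X_{2m}$ is quasi-affine but not affine, so there is no hope of a Zariski-locally trivial affine-space bundle over $\A^m$ with affine total space, and the Nisnevich refinement you allude to would need a genuinely new idea.

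What the paper actually does is prove the contractibility of $X_{2m}$ by an \emph{inductive cofiber-sequence} argument (Theorem~\ref{thm:newaonecontractibles}), parallel to its main proof of Theorem~\ref{thm:main}: one slices by $x_n=0$ rather than by all of $x$, getting $U_n=\{x_n\neq 0\}\cong\A^{2n-1}\times\gm{}$ and closed complement $\cong X_{2n-2}\times\aone$, then applies purity and the octahedral axiom (Proposition~\ref{prop:octahedral}) together with the splitting lemma Proposition~\ref{prop:smashproduct} to obtain $X_{2n}\simeq X_{2n-2}\wedge\pone$, which is contractible by induction. The same machinery, run on $Q_{2n}$ itself, gives $Q_{2n}\simeq Q_{2n-2}\wedge\pone$ directly (Corollary~\ref{cor:cofibersequence} and Theorem~\ref{thm:main}), which is the paper's primary proof. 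Either way, the key input you are missing is this inductive cofiber-sequence step; your projection $p$ to $\A^m$ does not seem to supply it.
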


\begin{remintro}
The interesting part of Theorem \ref{thmintro:main} is the case where $n = 2m$, and the result was known previously for $m = 1,2$ by special geometric considerations.  On the other hand, the $S^1$-stable homotopy types of the quadrics $Q_{2m}$ were known over fields having characteristic unequal to $2$ by unpublished work of Morel \cite{Morelquadrics}, and Dugger and Isaksen \cite{DuggerIsaksenHopf}.  We emphasize that Theorem \ref{thmintro:main} is an {\em unstable} result.  In contrast to the corresponding proof for $n$ odd, it uses the Nisnevich topology in an essential way, via the Morel-Voevodsky homotopy purity theorem.
\end{remintro}

While Theorem \ref{thmintro:main} tells us that some motivic spheres {\em do} have the $\aone$-homotopy type of smooth schemes, the following result shows that not all motivic spheres have this property.

\begin{propintro}[See Proposition \ref{prop:nonrepresentability}]
\label{propintro:nonrepresentability}
If $i,j$ are integers with $i > j$, the spheres $S^i_s \wedge \gm{\wedge j}$ do not have the $\aone$-homotopy type of smooth (affine) schemes.
\end{propintro}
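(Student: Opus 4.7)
The plan is to argue by contradiction: assume there exists a smooth $k$-scheme $X$ with $X \simeq_{\aone} S^i_s \wedge \gm{\wedge j}$ for some pair $i > j \geq 0$, and derive an impossibility. By Jouanolou's device we may assume $X$ is affine of dimension $d := \dim X$.

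The first step is to compute the Nisnevich cohomology of the motivic sphere with coefficients in any strictly $\aone$-invariant sheaf $\F$. The $S^1_s$-suspension isomorphism $\tilde H^n_{\Nis}(\Sigma_s Y, \F) \cong \tilde H^{n-1}_{\Nis}(Y, \F)$ reduces the computation to $\tilde H^*_{\Nis}(\gm{\wedge j}, \F)$. An iterated application of the localization long exact sequence for $\{0\} \hookrightarrow \A^1$, together with $\aone$-invariance ($H^{\geq 1}_{\Nis}(\A^1, \F) = 0$) and Gysin purity ($H^1_{\{0\}}(\A^1, \F) \cong \F_{-1}(k)$), yields $\tilde H^p_{\Nis}(\gm{\wedge j}, \F) = 0$ for $p > 0$ and $\tilde H^0_{\Nis}(\gm{\wedge j}, \F) \cong \F_{-j}(k)$. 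Consequently
\[
\tilde H^n_{\Nis}\bigl(S^i_s \wedge \gm{\wedge j}, \F\bigr) \cong \begin{cases} \F_{-j}(k) & n = i, \\ 0 & n \neq i. \end{cases}
\]

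Second, Morel's cohomological dimension theorem gives $H^n_{\Nis}(X, \F) = 0$ for $n > \dim X$ when $X$ is smooth and $\F$ is strictly $\aone$-invariant. Choosing $\F = \K^{MW}_j$ (so $\F_{-j}(k) \cong GW(k) \ne 0$) therefore forces $i \leq d$. Using a realization functor (complex, if $k$ embeds into $\C$, or étale in general) one sees that $X$ realizes to the topological/profinite sphere of dimension $i+j$. Andreotti--Frankel (or Artin vanishing for étale cohomology on smooth affines) then gives $d \geq i+j$, so in fact $d \geq i+j > i$.

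Combining these two inputs, the cohomology computation forces $\tilde H^n_{\Nis}(X, \F) = 0$ for every strictly $\aone$-invariant $\F$ and every $n > i$, in particular for $n = d > i$. The main obstacle is the final step: to derive a contradiction one needs to exhibit a strictly $\aone$-invariant sheaf $\F$ for which $\tilde H^d_{\Nis}(X, \F) \ne 0$. This is delicate because smooth affine schemes can have trivial top-degree Nisnevich cohomology (e.g.\ $\A^d$ is $\aone$-contractible with vanishing top Chow--Witt group), so the argument must crucially exploit that $X$ is $\aone$-equivalent to a non-contractible motivic sphere. I expect this to be the principal obstacle, requiring a careful analysis of the Rost--Schmid complex for $X$ together with the non-vanishing Hurewicz class $\pi^{\aone}_i(X) \cong \K^{MW}_j$.
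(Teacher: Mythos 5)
Your computation of the reduced cohomology of the motivic sphere is correct and is, in fact, the heart of the paper's own ``homotopic'' proof (given in the remark following Proposition~\ref{prop:nonrepresentability}): for strictly $\aone$-invariant $\F$ one has $\tilde H^n_{\Nis}(S^i_s \wedge \gm{\wedge j},\F) \cong \F_{-j}(k)$ for $n=i$ and $0$ otherwise. However, there is a genuine gap at the end, and the place you locate the obstacle is wrong. The resolution does \emph{not} lie in exhibiting a sheaf with non-vanishing cohomology in top degree $d$; indeed, by your own computation $\tilde H^n_{\Nis}(X,\F)=0$ for \emph{every} strictly $\aone$-invariant $\F$ and every $n\ne i$, so in particular $\tilde H^d_{\Nis}(X,\F)=0$ whenever $d>i$, and the hunt for a Rost--Schmid/Hurewicz argument in degree $d$ cannot succeed. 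The realization and Andreotti--Frankel step is likewise a detour: the inequality $d\geq i+j$ is consistent with $i\leq d$ and yields no contradiction.

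The ingredient you are missing is a vanishing bound that depends on the \emph{weight} $j$ rather than on $\dim X$. Take $\F = \K^M_j$, the unramified Milnor $K$-theory sheaf, rather than $\K^{MW}_j$. The Gersten resolution of $\K^M_j$ on a smooth scheme over a perfect field has terms $\bigoplus_{x\in X^{(m)}} \K^M_{j-m}(k(x))$, and since $\K^M_n = 0$ for $n<0$ this complex is concentrated in degrees $0,\dots,j$. Hence $H^n_{\Nis}(X,\K^M_j)=0$ for $n>j$, irrespective of $\dim X$. Combined with $\tilde H^i_{\Nis}(X,\K^M_j) \cong (\K^M_j)_{-j}(k) \cong \Z \ne 0$ and the hypothesis $i>j$, this is an immediate contradiction. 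Note that your choice $\F=\K^{MW}_j$ does not enjoy this sharp vanishing: $(\K^{MW}_j)_{-m} \cong \K^{MW}_{j-m}$ is the Witt sheaf (hence nonzero) once $m>j$, so the Rost--Schmid complex does not terminate at degree $j$ and the argument breaks. For reference, the paper's primary proof takes a parallel route via motivic cohomology: one produces a nonzero class in $\tilde H^{i+j,j}(S^i_s\wedge\gm{\wedge j},\Z)$ using the cancellation and suspension isomorphisms, then invokes the vanishing $H^{p,q}(X,\Z)=0$ for $X$ smooth and $p>2q$ (which is precisely $i>j$ here) from \cite[Theorem 19.3]{MVW}; this is again a weight bound, not a dimension bound.
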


Classically, $S^n$ admits an open cover by two contractible subspaces homemorphic to ${\mathbb R}^n$ (via stereographic projection), namely $S^n \setminus \{N\}$ and $S^n \setminus \{S\}$, where $N$ and $S$ are the North and South poles.  The intersection of these two open sets is homeomorphic to the total space of a line bundle over $S^{n-1}$ and, in particular, homotopy equivalent to $S^{n-1}$.  Pursuing the analogy between the classical spheres and the quadrics further, one can ask whether analogous covers exist in algebraic geometry.  We begin by establishing the existence of $\aone$-contractible open subschemes covering $Q_{2n}$; the main results are summarized in the following theorem (see Subsection \ref{ss:q2andq4} and Remark \ref{rem:alternativeproof} for detailed explanations of the connections with \cite{ADContractible}).

\begin{thmintro}[See Theorem \ref{thm:newaonecontractibles} and Corollary \ref{cor:notaunipotentquotient}]
\label{thmintro:contractible}
Suppose $k$ is a commutative unital ring.  Write $X_{2m}$ for the open subscheme of $Q_{2m}$ defined as the complement of $x_1 = \cdots = x_m = 0, z = -1$.
\begin{enumerate}[noitemsep,topsep=1pt]
\item The variety $X_{2m}$ is $\aone$-contractible over $\Spec k$.
\item For $m \geq 3$, $X_{2m}$ cannot be realized as a quotient of an affine space by a (scheme-theoretically) free action of a unipotent $k$-group scheme.
\end{enumerate}
\end{thmintro}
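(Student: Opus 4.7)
The plan for part (1) is to use the Morel--Voevodsky homotopy purity theorem applied to the closed immersion $Y := \{x_1 = \cdots = x_m = 0,\, z = -1\} \hookrightarrow Q_{2m}$, whose open complement is precisely $X_{2m}$. The subscheme $Y$ is smooth of codimension $m$ and isomorphic to $\mathbb{A}^m$ (parameterized by $y_1, \ldots, y_m$). The normal bundle $N_{Y/Q_{2m}}$ can be computed directly: differentiating $f := \sum_i x_i y_i - z(z+1)$ and restricting to $Y$ (where $x_i = 0$ and $2z+1 = -1$) gives $df|_Y = \sum_i y_i\, dx_i + dz$, so the conormal sheaf is the quotient of $\mathcal{O}_Y\langle dx_1, \ldots, dx_m, dz\rangle$ by this single relation. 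Eliminating $dz$ yields $N^\ast_{Y/Q_{2m}} \cong \mathcal{O}_Y^m$, so $N_{Y/Q_{2m}}$ is trivial of rank $m$.

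By purity, $Q_{2m}/X_{2m} \simeq_{\aone} \mathrm{Th}(\mathcal{O}_Y^m) \simeq_{\aone} Y_+ \wedge (\mathbb{A}^m/(\mathbb{A}^m \setminus 0)) \simeq_{\aone} S^m_s \wedge \gm{\wedge m}$, using that $Y \simeq_{\aone} \Spec k$. The cofiber sequence $X_{2m} \to Q_{2m} \to Q_{2m}/X_{2m}$ then reduces the $\aone$-contractibility of $X_{2m}$ to showing that the quotient morphism is an $\aone$-weak equivalence (a byproduct of which is Theorem \ref{thmintro:main} for the even case). I would establish this by exhibiting explicit local $\aone$-contractions: on the open $A := \{z \neq -1\} \cap X_{2m}$, the morphism $H \colon A \times \mathbb{A}^1 \to X_{2m}$ defined by $H((x,y,z), t) = (tx,\, y(tz+1)/(z+1),\, tz)$ is a polynomial morphism (since $z+1$ is invertible on $A$) whose image lies in $X_{2m}$ and which retracts $A$ onto $\{x = 0, z = 0\} \cong \mathbb{A}^m$; combined with the fact that each $\{x_i \neq 0\} \cap Q_{2m} \cong \mathbb{A}^{2m}$ is $\aone$-contractible, a Mayer--Vietoris / Nisnevich descent argument assembles these into the required global equivalence.

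For part (2), I would argue by contradiction: assume $X_{2m} = \mathbb{A}^n / G$ for a scheme-theoretically free action of a unipotent $k$-group scheme $G$, so $\mathbb{A}^n \to X_{2m}$ is a principal $G$-bundle with $\dim G = n - 2m$. The strategy is to identify an invariant obstructed by such a presentation. A natural candidate is $H^{m-1}(X_{2m}, \mathcal{O}_{X_{2m}})$: the local cohomology sequence for the codimension-$m$ inclusion $Y \hookrightarrow Q_{2m}$, combined with $H^{>0}(Q_{2m}, \mathcal{O}) = 0$ (as $Q_{2m}$ is affine), identifies this with $H^m_Y(Q_{2m}, \mathcal{O})$, which is nonzero for $m \geq 2$. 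On a free unipotent quotient of affine space, a Cartan--Leray / Hochschild--Serre argument using $H^{>0}(\mathbb{A}^n, \mathcal{F}) = 0$ constrains the range in which coherent cohomology can live; for $m \geq 3$ the nonvanishing above would exceed this range, yielding the contradiction. Since $X_2$ and $X_4$ do admit free unipotent presentations (cf.\ Subsection \ref{ss:q2andq4} and \cite{ADContractible}), the bound must be sharp.

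The main technical obstacle in part (1) is the patching step: the local homotopy $H$ stays in $X_{2m}$ but can leave $A$ for certain values of $t$, so gluing it with the trivializations over $\{x_i \neq 0\}$ into a single global $\aone$-contraction requires a careful Mayer--Vietoris or Nisnevich argument, or equivalently a direct verification that the projection $Q_{2m} \to Q_{2m}/X_{2m}$ induces isomorphisms on all $\aone$-homotopy sheaves (via Morel's connectivity and a Whitehead-type comparison). In part (2), the delicate point is isolating the precise invariant that transitions from permitting $m \leq 2$ to obstructing $m \geq 3$, since a naive coherent-cohomology bound may obstruct too much.
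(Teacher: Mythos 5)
Your plan for part (1) has a genuine gap in both of the two ways you propose to close it.  The initial purity computation is correct (your normal bundle calculation agrees with the paper's, and $Q_{2m}/X_{2m}\simeq \mathrm{Th}(\nu_{E_m/Q_{2m}})\simeq(\pone)^{\sma m}$ once one knows $E_m\cong\mathbb{A}^m$ has trivial normal bundle).  But the logical direction you want to run is backwards: knowing that $Q_{2m}\to Q_{2m}/X_{2m}$ is an $\aone$-weak equivalence does \emph{not}, in an unstable pointed model category, imply that $X_{2m}\to\ast$ is a weak equivalence.  What is true, and what the paper records in Remark~\ref{rem:alternativeproof}, is the converse implication: if $X_{2m}$ is already known to be $\aone$-contractible then left-properness forces $Q_{2m}\to Q_{2m}/X_{2m}$ to be an equivalence, recovering the identification of $Q_{2m}$.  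Your second, backup route --- patching the explicit local $\aone$-contraction $H$ on $A=\{z\neq-1\}$ with trivializations over $\{x_i\neq 0\}$ by ``Mayer--Vietoris / Nisnevich descent'' --- cannot work: $\aone$-contractibility is not a local property and there is no descent for contractions.  (The standard example: $\pone$ has an open cover by two copies of $\aone$, both contractible, yet $\pone$ is not.)  Your explicit homotopy $H$ is a correct and useful local contraction, and your observation that it can leave $A$ is accurate, but that is precisely why the patching fails rather than being a technical irritant.  The paper instead proves contractibility by induction on $n$ using the closed subscheme $Z_n=\{x_n=0\}\subset Q_{2n}$ (not $E_n$), the octahedral axiom applied to $\gm{}\to\aone\to X_{2n}$, and the split cofiber sequence of Proposition~\ref{prop:smashproduct}, concluding $X_{2n}\simeq X_{2n-2}\wedge\pone$; none of this requires assembling local contractions.

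Your plan for part (2) identifies the right nonvanishing invariant, $H^{m-1}(X_{2m},\O)\neq 0$ for $m\geq 2$, via local cohomology, and that is indeed part of the story.  But the obstruction you then invoke --- ``Cartan--Leray/Hochschild--Serre constrains the range in which coherent cohomology can live'' on a free unipotent quotient of $\mathbb{A}^n$ --- has no content as stated.  The Leray spectral sequence for the affine morphism $\pi\colon\mathbb{A}^n\to X_{2m}$ kills $H^{>0}(X_{2m},\pi_*\O)$, but it gives no constraint on $H^{>0}(X_{2m},\O)$ unless $\O_{X_{2m}}$ is a direct summand of $\pi_*\O_{\mathbb{A}^n}$, which is exactly what fails for nontrivial unipotent torsors (unipotent groups are not linearly reductive).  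A group-cohomology bound $H^p(G,-)=0$ for $p>\dim G$ only helps if $\dim G=n-2m$ is small, and there is no a priori bound on $n$.  What the paper actually proves (Theorem~\ref{thm:unipotentexcision}, Corollary~\ref{cor:codim3quotient}) is an excision theorem for $H^1(-,U)$ with $U$ split unipotent, via the Cousin complex: since the complement of $X_{2m}$ in $Q_{2m}$ has codimension $m\geq 3$ and $Q_{2m}$ is affine, every $U$-torsor on $X_{2m}$ is \emph{trivial}, so a hypothetical quotient presentation would force $\mathbb{A}^n\cong X_{2m}\times U$, contradicting the fact that $X_{2m}$ is quasi-affine but not affine.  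Triviality of the torsor, not a cohomological-dimension bound, is the mechanism.
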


The identification of the $\aone$-homotopy type of spheres has a number of applications.  Pursing the analogy with the classical geometry of $S^n$ further, recall that the open cover of $S^n$ described above also yields the ``clutching construction" which provides a bijection between pointed homotopy classes of maps $S^{n-1} \to GL_n(\real)$ and isomorphism classes of real vector bundles on $S^{n}$.  We pursue an analogous construction in algebraic geometry.  The open subscheme $X_{2m}$ has an ``opposite" defined as the complement of $x_1 = \cdots = x_m = z = 0$; this subscheme is $\aone$-contractible as well (if $2$ is a unit in $k$, then $X_{2m}$ and its opposite are isomorphic algebraic varieties).  When $m = 1$, the intersection of $X_{2m}$ and its opposite is isomorphic to $\aone \times \gm{}$.  For $m \geq 2$, the situation is more complicated as the intersection of $X_{2m}$ and its opposite is a quasi-affine but not affine variety, so every regular function on the intersection extends to $Q_{2m}$ itself.  As a consequence, considering the intersection of $X_{2m}$ and its opposite does not produce a reasonable analog of the clutching construction.

Instead, we produce two variants (abstract and concrete) of the clutching construction (see Theorems \ref{thm:abstractclutching} and \ref{thm:concreteclutching}).  The abstract clutching construction works rather generally and, roughly speaking, yields a surjection from the set of ``naive" $\aone$-homotopy classes of pointed maps from $Q_{2m-1}$ to $GL_r$ to the set of isomorphism classes of rank $r$ vector bundles on $Q_{2m}$; this result holds over $\Z$ (or, more generally, any base ring that is smooth over a Dedekind domain with perfect residue fields).

The concrete clutching construction refines the abstract clutching construction and shows how to produce explicit $GL_m$-valued $1$-cocycles on a suitable open cover of $Q_{2m}$ realizing any rank $r$ vector bundle, at least under suitable additional restrictions on the base $k$.  More precisely, consider the open affine subschemes $V_{2m}^0 := D_z \subset Q_{2m}$ and $V_{2m}^1 := D_{1+z} \subset Q_{2m}$.  The subschemes $V_{2m}^1$ and $V_{2m}^0$ are {\em not} $\aone$-contractible.  Nevertheless, there is an explicit morphism $\psi_m: V_{2m}^0 \cap V_{2m}^1 \to Q_{2m-1}$ such that the vector bundle on $Q_{2m}$ attached to a pointed morphism $f: Q_{2m-1} \to GL_r$ is obtained by gluing copies of the trivial bundle of rank $r$ on $V_{2m}^i$ along $V_{2m}^0 \cap V_{2m}^1$ via the composite map $f \circ \psi_m$.  One consequence of this construction is the following result, which is perhaps of independent interest.

\begin{thmintro}[See Theorem \ref{thm:concreteclutching}(2)]
\label{thmintro:generator}
If $k$ is a regular ring, then for every integer $n \geq 1$, there exists an explicit matrix $\beta_m: Q_{2m-1} \to GL_m$, constructed by Suslin, such that the rank $n$ vector bundle on $Q_{2m}$ obtained by gluing trivial bundles via the automorphism $\beta_m \circ \psi_m$ yields a generator of the rank $1$ free $K_0(k)$-module $\tilde{K}_0(Q_{2m})$.
\end{thmintro}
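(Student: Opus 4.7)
The strategy is to combine the concrete clutching construction with Mayer--Vietoris in algebraic $K$-theory and Suslin's classical identification of a generator of $K_1(Q_{2m-1})$. By Theorem~\ref{thm:concreteclutching}(1), the class of the bundle $E(\beta_m \circ \psi_m) \in K_0(Q_{2m})$ is obtained from $[\beta_m \circ \psi_m] \in K_1(V_{2m}^0 \cap V_{2m}^1)$ via the Mayer--Vietoris boundary associated to the Zariski cover $\{V_{2m}^0, V_{2m}^1\}$ of $Q_{2m}$:
$$K_1(V_{2m}^0) \oplus K_1(V_{2m}^1) \longrightarrow K_1(V_{2m}^0 \cap V_{2m}^1) \xrightarrow{\;\partial\;} K_0(Q_{2m}) \longrightarrow K_0(V_{2m}^0) \oplus K_0(V_{2m}^1),$$
so that $[E(\beta_m \circ \psi_m)] - m[\mathcal{O}_{Q_{2m}}] = \partial[\beta_m \circ \psi_m]$ lies in $\tilde K_0(Q_{2m})$.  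It therefore suffices to show that this element generates the $K_0(k)$-module $\tilde K_0(Q_{2m})$.

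Next, I would identify the relevant $K$-groups.  Combining the $\aone$-weak equivalence $Q_{2m} \sim_{\aone} S^m_s \wedge \gm{\wedge m}$ of Theorem~\ref{thmintro:main}, $\aone$-invariance of algebraic $K$-theory on regular bases, and Bott periodicity for $KGL$ (i.e.\ $\Sigma_T KGL \simeq KGL$ with $T \simeq S^1_s \wedge \gm{}$), one obtains $\tilde K_0(Q_{2m}) \cong K_0(k)$ as a free $K_0(k)$-module of rank one.  The parallel calculation for $Q_{2m-1} \sim_{\aone} S^{m-1}_s \wedge \gm{\wedge m}$ produces a distinguished ``top'' $K_0(k)$-summand of $\tilde K_1(Q_{2m-1})$, also free of rank one.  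A separate verification that $V_{2m}^0$ and $V_{2m}^1$ have vanishing reduced $K_0$ --- by exhibiting explicit $\aone$-deformation retractions onto simpler subschemes --- then forces $\partial$ to be surjective onto $\tilde K_0(Q_{2m})$.

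Finally, Suslin's work on the $K$-theory of affine quadrics identifies $\beta_m$ with a generator of the top $K_0(k)$-summand of $K_1(Q_{2m-1})$ described above; combined with the explicit description of $\psi_m$ provided by Theorem~\ref{thm:concreteclutching}, the composite $\partial \circ \psi_m^*$ agrees --- up to a unit of $K_0(k)^{\times}$ --- with the $K$-theoretic Bott periodicity isomorphism identifying the top summand of $\tilde K_1(Q_{2m-1})$ with $\tilde K_0(Q_{2m})$.  Hence $\partial[\beta_m \circ \psi_m]$ is a $K_0(k)$-module generator, as required.  The main obstacle is precisely this last comparison: showing that the Mayer--Vietoris boundary, precomposed with $\psi_m^*$, is the Bott periodicity map (rather than merely some nonzero map).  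This requires matching Suslin's combinatorial formula for $\beta_m$ with the geometric suspension encoded by the cover $\{V_{2m}^0, V_{2m}^1\}$, and is the only step where the particular form of $\beta_m$ enters essentially --- everything else in the argument is formal or already available from Suslin's work.
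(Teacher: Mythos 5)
Your proposal takes a genuinely different route from the paper.  You work directly with the classical Mayer--Vietoris boundary in algebraic $K$-theory for the Zariski cover $\{V_{2m}^0, V_{2m}^1\}$ of $Q_{2m}$, whereas the paper never forms this boundary map: it first proves the statement for the \emph{abstract} clutching construction $cl_{GL}(\beta_n)$ (Theorem \ref{thm:generator}), which is by construction the loop-suspension adjunction applied to $\beta_n$; the compatibility with the $K$-theoretic suspension isomorphism is then formal, since in the $\aone$-representability framework that adjunction \emph{is} the Bott map, and Suslin's theorem that $\beta_n$ generates $[Q_{2n-1},GL]_{\aone}\cong\Z$ gives the result over $\Z$.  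The identification of the abstract class $cl_{GL_n}(\beta_n)$ with your explicit cocycle $\beta_m\circ\psi_m$ is a separate step (Theorem \ref{thm:concreteclutching}(2)) which rests on Theorem \ref{thm:explicitequivalence}.  Your approach, if completed, would be attractive precisely because it could potentially bypass the explicit weak equivalence of Theorem \ref{thm:explicitequivalence} in favour of a direct $K$-theoretic computation; but as written it has two genuine gaps.

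First, and most importantly, you yourself flag the decisive step --- that $\partial\circ\psi_m^*$ agrees, up to a unit of $K_0(k)$, with the $K$-theoretic Bott/suspension isomorphism --- as ``the main obstacle,'' and then do not carry it out.  This is not a minor bookkeeping issue; it is the entire geometric content of the theorem, and it is exactly what the paper establishes (in the form that $\varphi_n$ corresponds to a unit of $\K^{MW}_0(k)$) via the explicit cohomological computations of Lemmas \ref{lem:generator1}--\ref{lem:composite}.  Absent some substitute for those computations, the comparison you invoke is an assertion, not an argument; note also that if $\varphi_n$ happened to be $\aone$-nullhomotopic, then $\partial\circ\psi_m^*$ would be zero, so \emph{some} nontrivial input is forced.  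Second, the assertion that $\tilde K_0(V_{2m}^i)=0$, ``by exhibiting explicit $\aone$-deformation retractions onto simpler subschemes,'' is unsupported and is in tension with the paper's explicit remark that $V_{2m}^0$ and $V_{2m}^1$ are \emph{not} $\aone$-contractible.  Vanishing of their reduced $K_0$ may still hold, but it requires proof and is not obviously reachable by the deformation retraction you envisage.  (This second point is in any case only needed for surjectivity of $\partial$; surjectivity alone does not show that the \emph{specific} class $\partial[\beta_m\circ\psi_m]$ is a generator, so the first gap is the essential one.)  Finally, since your explicit-cocycle argument would naturally live over a field, you would still need the base-change/descent step the paper uses to pass from $\Z$ (or a perfect prime field) to a general regular ring $k$; your proposal does not address that reduction.
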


These results have a number of applications which will be developed elsewhere, but which we mention here for the sake of context.  Naive homotopy classes of maps from a smooth affine $k$-scheme $X$ to $Q_{2n}$ appear in problems related to complete intersection ideals \cite[\S 2]{FaselMurthy}.  On the other hand, the set of naive $\aone$-homotopy classes of maps from a smooth affine scheme to $Q_{2n}$ was identified, at least if $k$ is an infinite field having characteristic unequal to $2$, with $[X,Q_{2n}]_{\aone}$ in \cite[Theorem 4.2.2]{AHWII}.  These ideas will be put together with the results established here in \cite{AsokFaselmotiviccohomotopy}.  Among other things, the set $[X,Q_{2m}]_{\aone}$ is a ``motivic cohomotopy group" (provided the dimension of $X$ is "small" compared to $m$): it can be equipped with a functorial abelian group structure and this additional structure can be used to study various classical problems, e.g., related to comparison of ``Euler class groups."

\subsubsection*{Note on prerequisites}
We close this introduction with a warning: the prerequisites required to read this paper are non-uniform.  The proofs of Theorem \ref{thmintro:main} and (the first part of) Theorem \ref{thmintro:contractible} use only basic algebraic geometry, standard results about cofiber sequences in pointed model categories \cite[Chapter 6]{Hovey} and knowledge of the basic aspects of \cite{MV} (see the preliminaries for further discussion regarding relaxation of hypotheses on the base); we have attempted to make them as accessible as possible.  The proofs of Proposition \ref{propintro:nonrepresentability} and the second part of Theorem \ref{thmintro:contractible} are slightly more algebro-geometrically involved: the first requires some knowledge of properties of motivic cohomology \cite{MVW} while the second requires knowledge about Cousin complexes for coherent sheaves.  The results of Section \ref{s:vectorbundles} on the clutching construction require some familiarity with some of the results of \cite{AHW,AHWII} (which we review), together with some ideas from \cite{MField}.  The proof of Theorem \ref{thmintro:generator} requires familiarity with some results of Suslin.

\subsubsection*{Acknowledgements}
Some of the results claimed in this paper were announced in talks by first and second named authors as early as 2007.  For example, the original proof of Theorem \ref{thmintro:main} we envisioned, specifically the description of $\aone$-homotopy types of $Q_n$ for $n$ even $\geq 6$, was an elementary geometric argument that was supposed to work over $\Spec \Z$; this argument contained a gap.  The authors would also like to thank Paul Balmer, Dan Isaksen, Christian Haesemeyer, and Fabien Morel, for useful discussions over the course of the project and Matthias Wendt for a number of useful comments on a first draft of this note.  Finally, we thank Marc Hoyois for helpful discussions about the $\aone$-homotopy category over non-Noetherian base schemes.

\subsubsection*{Preliminaries/Notation}
Throughout the paper the phrase ``$k$ is a ring" will always mean ``$k$ is a commutative unital ring".  We write $\Sm_k$ for the category of schemes that are separated, finite type and smooth over $\Spec k$, and $\Spc_k$, i.e., the category of ``spaces over $k$", is the category of simplicial presheaves on $\Sm_k$.  We write $\hsnis$ (resp. $\hspnis$) for the Nisnevich local homotopy category (as in \cite[\S 2.1]{MV}) and $\ho{k}$ (resp. $\hop{k}$) for the (pointed) Morel-Voevodsky $\aone$-homotopy category \cite{MV}.

The assumptions here require further discussion if $k$ is not Noetherian of finite Krull dimension, which are the explicit assumptions on $k$ used in $\cite{MV}$ in the construction of the Morel-Voevodsky $\aone$-homotopy category.  In contrast to \cite{MV}, but following \cite{AHW}, we define the Nisnevich topology on $\Sm_k$ to be the topology generated by finite families of \'etale maps $\{U_i \to X\}$ admitting a splitting sequence by finitely presented closed subschemes, in the sense of \cite[\S3, p. 97]{MV}.  This definition, which is equivalent to the ``standard" definition \cite[\S 3 Definition 1.2]{MV} if $k$ is Noetherian by \cite[\S3 Lemma 1.5]{MV}, is studied in more detail in \cite[\S1]{DAGXI}.   We refer the reader to \cite[Appendix C]{Hoyois} for a ``good" version of the $\aone$-homotopy category over a quasi-compact, quasi-separated base scheme.  We note here that arbitrary affine schemes are quasi-compact and quasi-separated. In addition, in the construction of the $\aone$-homotopy category when $k$ is not Noetherian, one imposes Nisnevich descent (not hyperdescent) as discussed in \cite[\S 3.1]{AHW} and $\aone$-invariance.

As usual, smooth schemes are, via the Yoneda embedding, identified with their corresponding representable (simplicially constant) presheaves.  If $\mathcal{X} \in \Spc_k$, we write $\mathcal{X}_+$ for the pointed space $\mathcal{X} \coprod \Spec k$, pointed by the disjoint copy of $\Spec k$.  If $Y \to X$ is a closed immersion of smooth scheme, we write $\nu_{Y/X}$ for the normal bundle to $Y$ in $X$.  In that case, $Th(\nu_{Y/X})$ is the Thom space of the vector bundle $\nu_{Y/X}$ as in \cite[\S 3 Definition 2.16]{MV}.

Suppose $(\mathscr{X},x)$ and $(\mathscr{Y},y)$ are pointed spaces.  We use the notation $\Sigma^i_s \mathscr{X} := S^i_s \wedge \mathscr{X}$ for simplicial suspension, and suppress the base-point.  Likewise, we write ${\mathbf R}\Omega^1_s \mathscr{X}$ for the derived simplicial loops of $\mathscr{X}$: i.e., first apply Nisnevich fibrant replacement and then apply $\Omega^1_s$.  We set $[(\mathscr{X},x),(\mathscr{Y},y)]_s := \hom_{\hspnis}((\mathscr{X},x),(\mathscr{Y},y))$, $[(\mathscr{X},x),(\mathscr{Y},y)]_{\aone} := \hom_{\hop{k}}((\mathscr{X},x),(\mathscr{Y},y))$ $[\mathscr{X},\mathscr{Y}]_{s} := \hom_{\hsnis}(\mathscr{X},\mathscr{Y})$ and $[\mathscr{X},\mathscr{Y}]_{\aone} := \hom_{\ho{k}}(\mathscr{X},\mathscr{Y})$.  Finally, if $(\mathcal{X},x)$ is a pointed space, we write $\bpi_{i}^{\aone}(\mathcal{X},x)$ for the Nisnevich sheaf associated with the presheaf $U \mapsto \hom_{\hop{k}}(S^i_s \wedge U_+,(\mathcal{X},x))$ (such sheaves are typically denoted with boldface symbols).


\section{Geometric models of motivic spheres}
\label{s:spheres}
This section is devoted to establishing Theorem \ref{thmintro:main} from the introduction.  The argument proceeds by induction.  Subsection \ref{ss:q2andq4} gives an elementary geometric argument showing that $Q_2$ is $\aone$-weakly equivalent to $\pone$, and a related (mostly elementary) argument showing that $Q_4$ is $\aone$-weakly equivalent to $({\pone})^{\sma 2}$; the first computation is the base-case for the induction argument.  Subsection \ref{ss:q2n} contains the proof of the main result, which is Theorem \ref{thm:main}.  Finally, Subsection \ref{ss:nongeometric} establishes some non-geometrizability results for motivic spheres.

\subsection{On the $\aone$-homotopy types of $Q_2$ and $Q_4$}
\label{ss:q2andq4}
\begin{prop}
\label{prop:homotopyofq2}
If $k = \Z$, then there is a (pointed) $\aone$-weak equivalence $Q_2 \isomt {\pone}$.
\end{prop}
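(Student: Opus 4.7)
The plan is to exhibit an explicit morphism $\phi : Q_2 \to \pone$ and to verify that it is a Zariski-locally trivial $\aone$-bundle. Since $\aone$-weak equivalences satisfy Zariski descent on the base and $\aone \to \Spec k$ is an $\aone$-weak equivalence, any Zariski-locally trivial $\aone$-bundle is an $\aone$-weak equivalence, so this will establish the proposition.

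The construction of $\phi$ hinges on the observation that the defining equation $x_1 y_1 = z(1+z)$ of $Q_2$ is exactly the statement $[x_1 : z] = [z+1 : y_1]$ whenever both ratios are defined. I would therefore define $\phi$ piecewise: as $[x_1 : z]$ on $U := Q_2 \setminus \{x_1 = 0,\, z = 0\}$, and as $[z+1 : y_1]$ on $V := Q_2 \setminus \{z+1 = 0,\, y_1 = 0\}$. These two formulas agree on $U \cap V$ by the defining equation, and the complements of $U$ and $V$ in $Q_2$ are disjoint (one lies over $z = 0$, the other over $z = -1$), so $U$ and $V$ together cover $Q_2$ and $\phi$ is a well-defined morphism of schemes over $\Z$.

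To verify the bundle structure, I would write down explicit trivializations over the two standard affine charts $U_1 = \{[1:t]\}$ and $U_0 = \{[s:1]\}$ of $\pone$. Over $U_1$, the relation $z = t x_1$ coming from $[x_1 : z] = [1:t]$ together with the defining equation forces $y_1 = t^2 x_1 + t$, producing an isomorphism
\[
\aone \times U_1 \isomto \phi^{-1}(U_1), \qquad (x, t) \longmapsto (x,\; t^2 x + t,\; t x).
\]
The analogous trivialization over $U_0$ is $(s, y) \mapsto (s^2 y - s,\, y,\, s y - 1)$. Direct calculation confirms both land in $Q_2$, are isomorphisms onto the preimages, and that the transition on the overlap is an affine transformation in the fiber coordinate. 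Choosing the base-point of $Q_2$ to be the $\Z$-rational point $(1,0,0)$, with image $[1:0] \in \pone$, makes $\phi$ a pointed morphism.

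The main obstacle is conceptual rather than computational: one must notice that the two natural rational projections $[x_1:z]$ and $[z+1:y_1]$ patch together to define a morphism on all of $Q_2$. Once this is seen, the remaining steps — verifying agreement on $U \cap V$, writing down the trivializations, confirming they are isomorphisms onto the preimages, and checking pointedness — reduce to elementary polynomial computations. Since all formulas are defined over $\Z$, the argument is valid over the integers within the $\aone$-homotopy framework recalled in the preliminaries.
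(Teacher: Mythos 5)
Your proof is correct, and it takes a genuinely different route from the paper's. The paper identifies $Q_2$ with the homogeneous space $SL_2/T$ for a maximal torus $T$, and obtains the $\aone$-bundle $Q_2 \to \pone$ as the natural morphism $SL_2/T \to SL_2/B \cong \pone$ induced by the inclusion $T \hookrightarrow B$ into the Borel; the group theory gives Zariski-local triviality for free. You instead write down the morphism $\phi$ directly via the piecewise formulas $[x_1:z]$ and $[z+1:y_1]$ and verify local triviality by hand. (Amusingly, your $\phi^{-1}(U_1) = Q_2\setminus\{x_1=0,\,z=-1\}$ is exactly the scheme the paper calls $X_2$ and separately observes is isomorphic to $\aone^2$.) Both proofs ultimately rest on the same abstract principle -- a Zariski-locally trivial $\aone$-bundle is an $\aone$-weak equivalence by descent -- so the difference is in how the bundle structure is exhibited. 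The paper's group-theoretic presentation is shorter but, as the authors themselves point out in the remark following the proof, relies on $Q_2$ being a homogeneous space, a feature that does not persist for $Q_{2n}$ over $\Z$ once $n\geq 2$; your coordinate computation is more elementary and self-contained, at the cost of some explicit verification. Both are clean base cases for the induction.
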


\begin{proof}
We can identify $Q_2$ as the quotient of $SL_2$ by its maximal torus $T := diag(t,t^{-1}) \cong \gm{}$ acting by right multiplication; this follows from a straightforward computation of invariants.  If $B$ is the linear algebraic group of upper triangular matrices with determinant $1$, then the closed immersion group homomorphism $T \hookrightarrow B$ induces a morphism of homogeneous spaces $SL_2/T \to SL_2/B$.  This morphism of homogeneous spaces is Zariski locally trivial with fibers isomorphic to $\aone$ (use the standard open cover of $\pone$ by two copies of $\aone$), in particular an $\aone$-weak equivalence.  To conclude, we observe that the quotient $SL_2/B$ is isomorphic to $\pone$, and we can make this morphism a pointed morphism by picking any $k$-point in $SL_2/T$ and looking at the image of this $k$-point in $\pone$.
\end{proof}

There are two ``natural" (pointed) $\aone$-weak equivalences of $Q_2 \to \pone$ that we know.  In the previous proof, the $\aone$-weak equivalence we wrote down arose from the structure of $Q_2$ as a homogeneous space, but as we now explain this seems to be an ``exceptional" low-dimensional phenomenon.  If $k$ is a ring in which $2$ is invertible, the varieties $Q_{2n}$ are all isomorphic to homogeneous spaces for suitable orthogonal groups, but this seems false if $2$ is not invertible in $k$.

The other ``natural" (pointed) $\aone$-weak equivalence arises as follows.  The locus of points where $x_1 = 0$ and $z = -1$ is a closed subscheme of $Q_2$ isomorphic to ${\mathbb A}^1$; the open complement of this closed subscheme is isomorphic to ${\mathbb A}^2$.  The normal bundle to this embedding of $\aone$ comes equipped with a prescribed trivialization, and we can use homotopy purity together with $\aone$-contractibility of ${\mathbb A}^2$ to construct another $\aone$-weak equivalence of $Q_2$ with $\pone$.    We now explain how to identify the $\aone$-homotopy type of $Q_4$ using the method just sketched.

\begin{prop}
\label{prop:homotopyofq4}
If $k = \Z$, then there is a (pointed) $\aone$-weak equivalence $Q_4 \isomt {\pone}^{\sma 2}$.
\end{prop}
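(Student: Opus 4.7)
The plan is to apply the Morel--Voevodsky homotopy purity theorem to a smooth codimension-$2$ closed immersion $Z \hookrightarrow Q_4$, in direct analogy with the second construction for $Q_2$ sketched immediately above the proposition. Take $Z$ to be the locus cut out by $x_1 = x_2 = 0$ and $z = -1$. Since the defining equation of $Q_4$ vanishes identically on this locus, $Z$ is a closed subscheme of $Q_4$ isomorphic to $\Spec k[y_1,y_2] \cong \aone^2$, and its open complement $U := Q_4 \setminus Z$ is precisely the variety $X_4$ appearing in Theorem \ref{thmintro:contractible}(1), hence is $\aone$-contractible.

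Next, I would compute the normal bundle $\nu_{Z/Q_4}$. Viewing $Z \hookrightarrow Q_4 \hookrightarrow \aone^5$, the conormal sheaf of $Z$ in $\aone^5$ is free of rank $3$ on $dx_1, dx_2, dz$. The differential of $f := x_1 y_1 + x_2 y_2 - z(1+z)$ restricted to $Z$ equals $y_1 \, dx_1 + y_2 \, dx_2 + dz$, using that $x_1 = x_2 = 0$ and $-1 - 2z = 1$ on $Z$. The resulting conormal short exact sequence identifies $\nu_{Z/Q_4}^{\vee}$ with the quotient of $\langle dx_1, dx_2, dz\rangle$ by $\langle y_1\,dx_1 + y_2\,dx_2 + dz\rangle$, which is free of rank $2$ on the classes of $dx_1, dx_2$. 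Hence $\nu_{Z/Q_4}$ is trivial of rank $2$.

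Combining homotopy purity with these computations yields a pointed $\aone$-cofiber sequence $U_+ \to Q_{4,+} \to \operatorname{Th}(\nu_{Z/Q_4})$ in $\hop{k}$. Since $Z$ is $\aone$-contractible and $\nu_{Z/Q_4}$ is trivial of rank $2$, one has
\[
\operatorname{Th}(\nu_{Z/Q_4}) \;\sim_{\aone}\; Z_+ \wedge \bigl(\aone^2/(\aone^2 \setminus 0)\bigr) \;\sim_{\aone}\; \aone^2/(\aone^2 \setminus 0) \;\sim_{\aone}\; (\pone)^{\wedge 2},
\]
using the standard identification of $\aone^n/(\aone^n \setminus 0)$ with $(\pone)^{\wedge n}$. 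The $\aone$-contractibility of $U$ then forces the collapse map $Q_4 \to \operatorname{Th}(\nu_{Z/Q_4})$ to be an $\aone$-weak equivalence, and choosing the basepoint of $Q_4$ to be any $k$-point of $Z$, such as $(0,0,0,0,-1)$, makes it pointed.

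The main obstacle in this plan is the $\aone$-contractibility of $U = X_4$. When $m=1$ the analogue $X_2$ is obtained by removing a divisor from the smooth rational surface $Q_2$ and can be described directly as an affine plane, but for $m=2$ the removed subscheme has codimension $2$ and the contractibility of $U$ cannot be read off from a single coordinate projection. Instead, one must either realize $X_4$ as the quotient of an affine space by a free unipotent group action (which remains possible in the range $m \leq 2$, consistent with the discussion preceding Theorem \ref{thmintro:contractible}(2)), or cover $X_4$ by $\aone$-contractible open subschemes with controlled intersections and invoke Nisnevich descent; this is precisely the work carried out in Theorem \ref{thm:newaonecontractibles}, specialized to $m=2$.
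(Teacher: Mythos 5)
Your argument is essentially the same as the paper's: apply homotopy purity to the codimension-$2$ closed immersion $E_2 = \{x_1 = x_2 = 0, z = -1\} \cong \aone^2 \hookrightarrow Q_4$, observe that the normal bundle is trivial, identify the Thom space with $(\pone)^{\sma 2}$, and conclude using the $\aone$-contractibility of the open complement $X_4 = Q_4 \setminus E_2$. Your explicit conormal computation (correctly finding $df|_{E_2} = y_1\,dx_1 + y_2\,dx_2 + dz$) fills in a detail the paper merely asserts. For the contractibility of $X_4$, the paper cites \cite[Corollary 3.1 and Remark 3.3]{ADBundle} --- realizing $X_4$ as the base of a Zariski-locally-trivial $\aone$-bundle with total space $\aone^5$ --- which is your first alternative; your other option (deferring to Theorem \ref{thm:newaonecontractibles}) is logically sound, since that theorem's induction only depends on the base case $X_2 \cong \aone^2$ and not on the present proposition, but it reverses the paper's expository order. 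Incidentally, the proof of Theorem \ref{thm:newaonecontractibles} runs via cofiber sequences and the octahedral axiom rather than Nisnevich descent as you suggest, but that is a peripheral misattribution.

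One genuine slip: the basepoint. The map $Q_4 \to Q_4/X_4 \cong \operatorname{Th}(\nu_{E_2/Q_4})$ collapses $X_4$ to the Thom-space basepoint (the point ``at infinity,'' i.e., the image of $\nu \setminus 0$). For the composite $Q_4 \to (\pone)^{\sma 2}$ to be a pointed map, the basepoint of $Q_4$ must therefore lie in $X_4$, not in $E_2 = Z$. Your choice $(0,0,0,0,-1)$ satisfies $x_1 = x_2 = 0$, $z = -1$ and so lies in $Z$; it is sent to the \emph{zero section}, not the basepoint, of the Thom space, so the map is not pointed with this choice. Instead take any $k$-point of $X_4$, e.g., the origin $x_1 = x_2 = y_1 = y_2 = z = 0$ (which lies on $Q_4$ and off $E_2$ since $z \neq -1$).
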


\begin{proof}
Let $E_2$ be the closed subscheme of $Q_4$ defined by $x_1 = x_2 = 0$ and $z = -1$.  Observe that $E_2$ is isomorphic to ${\mathbb A}^2$.  The normal bundle to $E_2$ is trivial, and we fix a choice of trivialization.  Let $X_4 := Q_4 \setminus E_2$.  The homotopy purity theorem gives a cofiber sequence of the form
\[
X_4 \longrightarrow Q_4 \longrightarrow Th(\nu_{E_2/Q_4}) \longrightarrow \cdots.
\]
The choice of trivialization determines an isomorphism $Th(\nu_{E_2/Q_4}) \cong {\pone}^{\sma 2} \wedge (E_2)_+$.  The map $E_2 \to \Spec k$ is an $\aone$-weak equivalence, it follows that $Th(\nu_{E_2/Q_4}) \cong {\pone}^{\sma 2}$ in $\hop{k}$.  We showed in \cite[Corollary 3.1 and Remark 3.3]{ADBundle} that $X_4$ is $\aone$-contractible over $\Spec \Z$ (indeed, it is the base of a Zariski locally trivial morphism with total space ${\mathbb A}^5$ and fibers isomorphic to $\aone$).  Properness of the $\aone$-local model structure \cite[\S 2 Theorem 3.2]{MV} guarantees that pushouts of $\aone$-weak equivalences along cofibrations are $\aone$-weak equivalences.  Applying this fact to the diagram $\ast \leftarrow X_4 \rightarrow Q_4$, one concludes that the map $Q_4 \to Th(\nu_{E_2/Q_4})$ is an $\aone$-weak equivalence and the proposition follows by combining the stated isomorphisms.
\end{proof}

\begin{rem}
\label{rem:quaternionicprojectivespace}
The variety $Q_4$ is isomorphic to the variety $\mathrm{HP}^1$ studied in \cite[\S 3]{PaninWalterPontryaginClasses}.  The variety we call $X_4$ appears in the ``cell decomposition" of $\mathrm{HP}^1$ described in \cite[Theorem 3.1]{PaninWalterPontryaginClasses} (it is the variety they call $X_0$).
\end{rem}

\subsection{On the $\aone$-homotopy type of $Q_{2n}$, $n \geq 3$}
\label{ss:q2n}
\subsubsection*{The ``octahedral" axiom in a pretriangulated category}
If $\mathcal{C}$ is a pointed model category, then recall that one can understand the homotopy cofiber of a composite in terms of the homotopy cofibers of the constituents by means of the following result, which is sometimes called the ``octahedral" axiom, by analogy with a corresponding result for stable model categories \cite[Proposition 6.3.6]{Hovey}.

\begin{prop}
\label{prop:octahedral}
Suppose given a sequence of maps in a pointed model category $\mathcal{C}$ of the form:
\[
X \stackrel{u}{\longrightarrow} Y \stackrel{v}{\longrightarrow} Z.
\]
If $U = \operatorname{hocofib}(u)$, $V = \operatorname{hocofib}(uv)$, $W = \operatorname{hocofib}(v)$, then there is a commutative diagram of the form
\[
\xymatrix{
X \ar[r]^u \ar[d]^{id} & Y \ar[r]\ar[d]^v & U \ar[d]^r & \\
X \ar[r]^{uv} \ar[d]^{u} & Z \ar[r]\ar[d]^{id} & V \ar[d]^s & \\
Y \ar[r]^v &        Z \ar[r]       & W &
}
\]
where the rows and the third column are cofiber sequences, and the maps $r$ and $s$ are the maps induced by functoriality of the homotopy cofiber construction.
\end{prop}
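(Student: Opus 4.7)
The plan is to compute all three homotopy cofibres as strict pushouts and then apply the pasting lemma for pushouts twice. Since $\mathcal{C}$ is a pointed model category, one can functorially replace the sequence $X \xrightarrow{u} Y \xrightarrow{v} Z$ by a composable pair of cofibrations between cofibrant objects without altering any of the homotopy cofibres involved: first cofibrantly replace $X$, then factor $u$ as a cofibration followed by a trivial fibration, and then factor the resulting map out of $Y$ analogously. With this replacement in place, the strict pushouts $U = Y \cup_X \ast$, $V = Z \cup_X \ast$, and $W = Z \cup_Y \ast$ compute $\operatorname{hocofib}(u)$, $\operatorname{hocofib}(uv)$, and $\operatorname{hocofib}(v)$ respectively.

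The natural maps $r \colon U \to V$ and $s \colon V \to W$ are furnished by the functoriality of the pushout. Commutativity of the four squares drawn in the statement is then immediate: the left-hand column is either the identity or the structure map $u$ of the sequence, both of which commute tautologically with the middle column, while the right-hand squares commute by the universal properties of the pushouts defining $U$ and $V$.

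The crux is to verify that the third column is itself a cofiber sequence. Consider the diagram
\[
\xymatrix{
X \ar[r]^{u} \ar[d] & Y \ar[r]^{v} \ar[d] & Z \ar[d] \\
\ast \ar[r] & U \ar[r]^{r} & V.
}
\]
The left square is a pushout by the definition of $U$, and the outer rectangle is the pushout defining $V$, so by the pasting lemma the right square is a pushout as well. Stacking a second pushout beneath it gives
\[
\xymatrix{
Y \ar[r]^{v} \ar[d] & Z \ar[d] \\
U \ar[r]^{r} \ar[d] & V \ar[d] \\
\ast \ar[r] & W,
}
\]
in which the top square is the pushout just established and the outer rectangle is the standard pushout computing $W = \operatorname{hocofib}(v)$; applying the pasting lemma once more identifies the bottom square as a pushout, so $W$ is indeed the cofibre of $r$.

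The single technical point, and the main potential obstacle, is ensuring that each pushout invoked above genuinely computes a homotopy pushout rather than merely a categorical one. This follows from the initial factorizations together with the standard fact that pushouts of cofibrations along arbitrary maps between cofibrant objects preserve both cofibrancy and homotopy type; once the cofibrant setup is fixed, the remainder is purely formal diagrammatic manipulation of pushout squares.
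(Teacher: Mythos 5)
The paper does not actually prove this proposition: it is stated as a recollection of a standard fact, with the surrounding text pointing the reader to the analogous result for stable model categories in Hovey \cite[Proposition 6.3.6]{Hovey}. There is therefore no ``paper proof'' to compare against; what follows is an assessment of your argument on its own merits.

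Your proof is correct, and it is the standard textbook argument. The reduction to a composable pair of cofibrations between cofibrant objects is the right normalization, after which the strict pushouts $U = Y\cup_X\ast$, $V = Z\cup_X\ast$, $W = Z\cup_Y\ast$ do compute the homotopy cofibers, and the two applications of the pasting lemma for pushouts give exactly what is wanted. A couple of points are worth making explicit, though neither affects correctness. First, in order for the bottom square $U\to V$, $\ast\to W$ to compute $\operatorname{hocofib}(r)$ rather than merely a categorical cofiber, you need to know that $r$ is itself a cofibration and that $U$ is cofibrant; both are immediate since $r$ is a cobase change of the cofibration $v$, and $\ast\to U$ is a cobase change of the cofibration $u$, but it would be cleaner to record this rather than subsume it into the blanket statement about ``pushouts of cofibrations.'' Second, the phrasing ``the outer rectangle is the standard pushout computing $W$'' is logically a bit backwards: more precisely, one forms the pushout $W' := V\cup_U\ast$ in the bottom square, pastes to conclude the outer rectangle is a pushout of $v$ along $Y\to\ast$, and then identifies $W'\cong W$ together with the map $V\to W'$ with $s\colon V\to W$. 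These are expository refinements rather than gaps; the argument as given is sound.
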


\subsubsection*{The induction step}
Consider the quadric $Q_{2n}$ introduced in the introduction over $\Z$.  The open subscheme $U_n$ defined by the non-vanishing of $x_n$ is isomorphic to ${\mathbb A}^{2n-1} \times \gm{}$ (with coordinates $(x_1,\ldots,x_{n-1},y_1,\ldots,y_{n-1},z)$ on the first factor and $x_n$ on the second factor).  The closed complement of this open subscheme, which we will call $Z_n$, is isomorphic to $Q_{2n-2} \times \aone$, with coordinates $x_1,\ldots,x_{n-1},y_1,\ldots,y_{n-1},z$ on the first factor and $y_n$ on the second factor.  The subvariety $Z_n$ has a distinguished point ``0" corresponding to $x_1 = \cdots = x_n = y_1 = \cdots = y_{n} = z = 0$.  Moreover, the normal bundle to $Z_n$ in $Q_{2n}$ is a line bundle equipped with a chosen trivialization (coming from $x_n$).

The closed subscheme of $U_n$ defined by $x_1 = \cdots = x_{n-1} = y_1 = \cdots = y_{n-1} = z = 0$ is isomorphic to $\gm{}$ with coordinate $x_n$ and the inclusion map
\[
\gm{} \longrightarrow U_n
\]
is a cofibration and $\aone$-weak equivalence (it even admits a retraction).  The closed subscheme of $Q_{2n}$ defined by  $x_1 = \cdots = x_{n-1} = y_{1} = \cdots = y_{n-1}$ is isomorphic to $Q_2$.  Imposing further the condition $y_n = 0$, the resulting variety is isomorphic to two copies of ${\mathbb A}^1$ (corresponding to $z = 0$ and $z = -1$).  Imposing yet further the condition $z = 0$ gives a subvariety of $Q_{2n}$ isomorphic to $\aone$ with coordinate $x_n$.  Note that, by construction, the intersection of this copy of the affine line with $Z_n$ is precisely the point ``0", i.e., there is a pullback square (of smooth schemes) of the form:
\[
\xymatrix{
0 \ar[r]\ar[d] & {\mathbb A}^1 \ar[d] \\
Z_n \ar[r] & Q_{2n}.
}
\]
Moreover, the natural map from the pullback of the normal bundle to $Z_n$ in $Q_{2n}$ to the normal bundle of $0$ in ${\mathbb A}^1$ is an isomorphism compatible with the specified trivializations.

It follows from the construction of the purity isomorphism \cite[\S 3 Theorem 2.23]{MV} that, given a pullback square as in the previous paragraph, the diagram
\[
\xymatrix{
{\mathbb A}^1/\gm{} \ar[r]\ar[d] & Th(\nu_{0/{\mathbb A}^1}) \ar[d] \\
Q_{2n}/U_n \ar[r] & Th(\nu_{Z_n/Q_{2n}})
}
\]
is commutative in $\hop{k}$ (cf. \cite[Lemma 2.1]{VMod2}).  Moreover, the specified trivializations yield $\aone$-weak equivalences of the form $Th(\nu_{0/{\mathbb A}^1}) \cong {\mathbb P}^1$ and $Th(\nu_{Z_n/Q_{2n}}) \cong {\mathbb P}^1 \sma (Z_n)_+$ by \cite[\S 3 Proposition 2.17.2]{MV} and their compatibility ensures that the diagram
\[
\xymatrix{
Th(\nu_{0/{\mathbb A}^1}) \ar[r] \ar[d] & {\mathbb P}^1 \ar[d] \\
Th(\nu_{Z_n/Q_{2n}}) \ar[r] & {\mathbb P}^1 \sma (Z_n)_+
}
\]
commutes (this time in the category of pointed spaces).  Furthermore, under these identifications, we can make the right vertical map in the diagram very explicit. Indeed, consider the map $S^0_s \to (Z_n)_+$ sending the base-point of $S^0_s$ to the base-point of $(Z_n)_+$ and the point $1$ of $S^0_s$ to the point ``$0$" in $Z_n$ described above; the right hand vertical map is the $\pone$-suspension of this map.

Now, we apply Proposition \ref{prop:octahedral} to the composition
\[
(\gm{})_+ \stackrel{u}{\longrightarrow} ({\mathbb A}^1)_+ \stackrel{v}{\longrightarrow} (Q_{2n})_+.
\]
Observe that the cofiber of $v$ is $Q_{2n}$ pointed with the image of $``0"$, while the cofiber of $u$ is, by means of the purity isomorphism, ${\mathbb P}^1$.  It remains to identify the cofiber of $(\gm{})_+ \to (Q_{2n})_+$, which is the content of the next lemma.

\begin{lem}
\label{lem:cofiberofuv}
If $k = \Z$, then the cofiber of the map $(\gm{})_+ \to (Q_{2n})_+$ is $\aone$-weakly equivalent to ${\mathbb P}^1 \sma (Q_{2n-2})_+$.
\end{lem}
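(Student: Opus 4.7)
The plan is to factor the inclusion $\gm{} \hookrightarrow Q_{2n}$ through the open subscheme $U_n$ and then invoke the octahedral axiom. Recall that the excerpt describes $U_n \cong {\mathbb A}^{2n-1} \times \gm{}$ and observes that, under this identification, the chosen copy of $\gm{}$ is precisely the inclusion of the second factor; in particular, $\gm{} \hookrightarrow U_n$ is a cofibration admitting a retraction, so after adjoining disjoint basepoints it yields a pointed $\aone$-weak equivalence $(\gm{})_+ \to (U_n)_+$.

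Next I apply Proposition \ref{prop:octahedral} to the composite $(\gm{})_+ \to (U_n)_+ \to (Q_{2n})_+$. The third column of the resulting diagram is a cofiber sequence
\[
\operatorname{hocofib}\bigl((\gm{})_+ \to (U_n)_+\bigr) \longrightarrow \operatorname{hocofib}\bigl((\gm{})_+ \to (Q_{2n})_+\bigr) \longrightarrow \operatorname{hocofib}\bigl((U_n)_+ \to (Q_{2n})_+\bigr),
\]
whose leftmost term is $\aone$-weakly contractible by left properness of the $\aone$-local model structure (pushing out the trivial cofibration $(\gm{})_+ \to (U_n)_+$ along the map to a point). It therefore remains to identify the right-hand cofiber with $\pone \wedge (Q_{2n-2})_+$, and for this I would invoke the Morel--Voevodsky homotopy purity theorem for the closed embedding $Z_n \hookrightarrow Q_{2n}$ with complement $U_n$: this yields $Q_{2n}/U_n \simeq Th(\nu_{Z_n/Q_{2n}})$. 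The normal bundle is trivial of rank $1$ (the trivialization coming from $x_n$, as recorded in the excerpt), so by \cite[\S 3 Proposition 2.17.2]{MV} its Thom space identifies with $\pone \wedge (Z_n)_+$, and the projection $Z_n = Q_{2n-2} \times {\mathbb A}^1 \to Q_{2n-2}$ is an $\aone$-weak equivalence, hence $\pone \wedge (Z_n)_+ \simeq_{\aone} \pone \wedge (Q_{2n-2})_+$.

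The main obstacle is purely book-keeping: one must confirm that each $\aone$-weak equivalence above can be chosen compatibly with basepoints, so that the composite identification of $\operatorname{hocofib}\bigl((\gm{})_+ \to (Q_{2n})_+\bigr)$ with $\pone \wedge (Q_{2n-2})_+$ is a pointed $\aone$-weak equivalence; this matters because the lemma will subsequently be combined with the octahedral identification applied to $(\gm{})_+ \to ({\mathbb A}^1)_+ \to (Q_{2n})_+$ already set up in the text. Each such compatibility check is routine given the explicit descriptions above.
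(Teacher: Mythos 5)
Your argument is correct and follows essentially the same route as the paper: the key inputs in both cases are the open--closed decomposition $Q_{2n} = U_n \sqcup Z_n$, the fact that $(\gm{})_+ \hookrightarrow (U_n)_+$ is a trivial cofibration, the purity identification $Q_{2n}/U_n \simeq Th(\nu_{Z_n/Q_{2n}}) \simeq \pone \sma (Z_n)_+$, and the $\aone$-weak equivalence $Z_n \to Q_{2n-2}$. The only real difference is organizational: the paper applies the gluing lemma directly (the map of cofibers $Q_{2n}/\gm{} \to Q_{2n}/U_n$ is an $\aone$-weak equivalence because $\gm{} \to U_n$ is a trivial cofibration and both composites into $Q_{2n}$ are cofibrations), whereas you invoke the octahedral axiom through $(\gm{})_+ \to (U_n)_+ \to (Q_{2n})_+$ and then note that the first cofiber is $\aone$-contractible; one small remark is that the contractibility of $\operatorname{hocofib}((\gm{})_+ \to (U_n)_+)$ follows simply because cobase change of a trivial cofibration is a trivial cofibration, so left properness is not actually needed at that step. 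Your observation about the basepoint compatibility is apt and correctly anticipates how the lemma feeds into Corollary~\ref{cor:cofibersequence}.
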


\begin{proof}
The projection map $Z_n \to Q_{2n-2}$ is an $\aone$-weak equivalence, it follows that it induces an $\aone$-weak equivalence ${\mathbb P}^1 \sma (Z_n)_+ \isomt {\mathbb P}^1 \sma (Q_{2n-2})_+$.  On the other hand, by the purity isomorphism, the space ${\mathbb P}^1 \sma (Z_n)_+$ is $\aone$-weakly equivalent to $Q_{2n}/U_n$.  Since the map $\gm{} \to U_n$ described above is an $\aone$-weak equivalence and cofibration and since the induced map $\gm{} \to Q_{2n}$ is a cofibration, it follows that the induced map of cofibers
\[
Q_{2n}/\gm{} \to Q_{2n}/U_n
\]
is an $\aone$-weak equivalence.  Moreover, the same statement remains true for the cofibers of the pointed morphisms $(\gm{})_+ \to (Q_{2n})_+$ and $(U_{n})_+ \to (Q_{2n})_+$.
\end{proof}

Combining Proposition \ref{prop:octahedral} and Lemma \ref{lem:cofiberofuv}, we deduce the following result.

\begin{cor}
\label{cor:cofibersequence}
If $k = \Z$, then for any integer $n \geq 1$, there is a cofiber sequence of the form
\[
{\pone} \longrightarrow {\mathbb P}^1 \sma (Q_{2n-2})_+ \longrightarrow Q_{2n} \longrightarrow \cdots.
\]
\end{cor}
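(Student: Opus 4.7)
The statement follows by assembling the pieces already laid out in the preceding discussion. The plan is to apply the octahedral axiom of Proposition \ref{prop:octahedral} to the composition
\[
(\gm{})_+ \stackrel{u}{\longrightarrow} ({\mathbb A}^1)_+ \stackrel{v}{\longrightarrow} (Q_{2n})_+
\]
constructed in the text, and then identify each of the three homotopy cofibers $U = \operatorname{hocofib}(u)$, $V = \operatorname{hocofib}(uv)$, $W = \operatorname{hocofib}(v)$ in turn, so that the cofiber sequence appearing as the third column of the octahedral diagram becomes the claimed cofiber sequence.

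First, I would identify $U$. Here $u$ arises from the closed embedding $0 \hookrightarrow {\mathbb A}^1$ (with the basepoint sent to the added basepoint on each side), whose normal bundle comes with a canonical trivialization. Homotopy purity together with $\aone$-contractibility of the ambient ${\mathbb A}^1$ gives $U \simeq_{\aone} {\pone}$; this is exactly the $\aone$-weak equivalence $(\gm{})_+ \to ({\mathbb A}^1)_+$ collapses to in the cofiber, and it matches the left column of the purity diagram displayed just before Lemma \ref{lem:cofiberofuv}.

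Next, I would identify $V$. The map $uv$ is the inclusion $(\gm{})_+ \to (Q_{2n})_+$ coming from the embedding of $\gm{}$ as the closed subscheme of $Q_{2n}$ cut out by $x_1 = \cdots = x_{n-1} = y_1 = \cdots = y_{n-1} = z = 0$. By Lemma \ref{lem:cofiberofuv}, $V \simeq_{\aone} {\pone} \sma (Q_{2n-2})_+$. Finally, $W$ is the cofiber of $v: ({\mathbb A}^1)_+ \to (Q_{2n})_+$; since the inclusion ${\mathbb A}^1 \hookrightarrow Q_{2n}$ is a cofibration and ${\mathbb A}^1$ is $\aone$-contractible, properness of the $\aone$-local model structure (\cite[\S 2 Theorem 3.2]{MV}, as already invoked in the proof of Proposition \ref{prop:homotopyofq4}) gives $W \simeq_{\aone} Q_{2n}$, pointed at the image of the point ``$0$''.

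Plugging these three identifications into the third column of the octahedral diagram in Proposition \ref{prop:octahedral} yields the cofiber sequence
\[
{\pone} \longrightarrow {\pone} \sma (Q_{2n-2})_+ \longrightarrow Q_{2n} \longrightarrow \cdots
\]
claimed in the corollary. The only subtlety I foresee is bookkeeping: one must check that the octahedral identifications produce the ``expected'' maps (so that the first map is indeed the purity map followed by the map to the quotient, and the second map is obtained from the purity isomorphism for $Z_n \hookrightarrow Q_{2n}$ composed with the $\aone$-equivalence $Z_n \simeq Q_{2n-2}$). This compatibility is exactly the content of the commutative diagrams displayed just before Lemma \ref{lem:cofiberofuv}, so no additional argument is needed.
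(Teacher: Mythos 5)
Your proof is correct and follows exactly the paper's approach: the paper's stated proof is literally "Combining Proposition \ref{prop:octahedral} and Lemma \ref{lem:cofiberofuv}," and the identifications $U \simeq \pone$, $V \simeq \pone \sma (Q_{2n-2})_+$, $W \simeq Q_{2n}$ that you spell out are precisely the ones established in the preceding discussion and Lemma \ref{lem:cofiberofuv}.
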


To finish, we provide an alternative identification of the cofiber of ${\pone} \to {\mathbb P}^1 \sma (Q_{2n-2})_+$.  Recall that this map is induced by $\pone$-suspension of a map $S^0_s \to (Z_n)_+$.  We prove a more general statement about smashing this map with the identity map.

\begin{prop}
\label{prop:smashproduct}
Assume $(\mathcal{X},x)$ is a pointed space, and $\mathcal{X}_+$ is $\mathcal{X}$ with a disjoint base-point attached.  Let $\iota: S^0_s \to \mathcal{X}_+$ be the map that sends the base-point of $S^0_s$ to the (disjoint) base-point of $\mathcal{X}_+$ and the non-base-point to $x \in \mathcal{X}(k)$.  If $(\mathcal{Y},y)$ is a pointed space, then the map $\mathcal{Y} \cong S^0_s \sma \mathcal{Y} \stackrel{\iota \sma id}{\longrightarrow} \mathcal{X}_+ \sma \mathcal{Y}$ fits into a split cofiber sequence of the form
\[
\mathcal{Y} \stackrel{\iota \sma id}{\longrightarrow} \mathcal{X}_+ \sma \mathcal{Y} \longrightarrow \mathcal{X} \sma \mathcal{Y}.
\]
\end{prop}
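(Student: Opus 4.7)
The plan is to derive the split cofiber sequence from an elementary cofiber sequence for $\mathcal{X}_+$ alone, and then smash with $\mathcal{Y}$. First I would observe that $\iota: S^0_s \to \mathcal{X}_+$ is itself a cofibration whose strict cofiber is $\mathcal{X}$. Indeed, $\mathcal{X}_+ = \mathcal{X} \coprod \Spec k$ with basepoint the disjoint summand, while $\iota$ sends the basepoint of $S^0_s$ to that disjoint summand and the non-basepoint to $x \in \mathcal{X}(k)$; collapsing the image of $\iota$ to a point identifies $x$ with the added basepoint and produces $(\mathcal{X},x)$. Hence there is a cofiber sequence
\[
S^0_s \stackrel{\iota}{\longrightarrow} \mathcal{X}_+ \longrightarrow \mathcal{X}.
\]

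Next I would smash this sequence with $(\mathcal{Y},y)$. Since smashing with a pointed object is a left Quillen functor on $\Spc_k$ (cf.~\cite[Chapter 6]{Hovey}), the map $\iota \wedge id_{\mathcal{Y}}$ remains a cofibration, and its strict cofiber is naturally isomorphic to $(\operatorname{cofib} \iota) \wedge \mathcal{Y} = \mathcal{X} \wedge \mathcal{Y}$. Identifying $S^0_s \wedge \mathcal{Y}$ with $\mathcal{Y}$ in the usual way then produces the claimed cofiber sequence.

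For the splitting, I would use the obvious retraction $\pi: \mathcal{X}_+ \to S^0_s$ that collapses all of $\mathcal{X}$ to the non-basepoint of $S^0_s$ (and preserves the added basepoint); manifestly $\pi \circ \iota = id_{S^0_s}$, and smashing with $id_{\mathcal{Y}}$ yields a retraction of $\iota \wedge id_{\mathcal{Y}}$, exhibiting the cofiber sequence as split. There is really no serious obstacle here: the proposition is a formal consequence of standard behaviour of smash products and cofibrations in a pointed simplicial model category, and the only small verification required is that the strict quotient $\mathcal{X}_+/S^0_s$ is $\mathcal{X}$, which is immediate from unwinding the definitions.
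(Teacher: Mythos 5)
Your proof is correct, and it takes a route that is slightly different from, though morally equivalent to, the paper's. You first establish the basic cofiber sequence
\[
S^0_s \stackrel{\iota}{\longrightarrow} \mathcal{X}_+ \longrightarrow \mathcal{X},
\]
identifying the strict quotient $\mathcal{X}_+/S^0_s$ with $(\mathcal{X},x)$, and then apply $- \sma \mathcal{Y}$, invoking that smashing with a (cofibrant) pointed object is a left Quillen endofunctor of $\Spc_{k,\bullet}$ that therefore preserves cofibrations and strict cofibers. The paper skips the intermediate cofiber sequence and instead computes the quotient $(\mathcal{X}_+ \sma \mathcal{Y})/\mathcal{Y}$ directly from the definition of the smash product, rewriting $\mathcal{X}_+ \sma \mathcal{Y}$ as $(\mathcal{X}\times\mathcal{Y})/(\mathcal{X}\times y)$ and then collapsing the image of $x \times \mathcal{Y}$. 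Both arguments hinge on the same observation that the collapse $\pi: \mathcal{X}_+ \to S^0_s$ is a retraction of $\iota$, so the sequence splits and the homotopy cofiber agrees with the strict cofiber. Your factorization is arguably cleaner conceptually and more reusable (the cofiber sequence for $\mathcal{X}_+$ alone is handy elsewhere), at the minor cost of leaning on the left Quillen machinery; the paper's version stays entirely at the level of explicit quotients of presheaves, which some readers may find more concrete. Both are complete.
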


\begin{proof}
The splitting is given by the map $\mathcal{X}_+ \to S^0_s$ that collapses the non base-point component of $\mathcal{X}_+$ to the non-base-point of $S^0_s$.  In particular, the map $\iota \sma id$ is a cofibration.  The homotopy cofiber of this map is then computed by the actual quotient.

The quotient $\mathcal{X}_+ \sma \mathcal{Y}$ is, by definition, the quotient $\mathcal{X}_+ \times \mathcal{Y}$ by $\mathcal{X}_+ \vee \mathcal{Y}$, where the latter is the disjoint union of $\mathcal{X}_+ \times y$ and $\Spec k \times \mathcal{Y}$.  Identify $\mathcal{X}_+ \times \mathcal{Y}$ as $(\mathcal{X} \times \mathcal{Y}) \coprod (\Spec k \times \mathcal{Y})$.  We describe the quotient in two steps.  First, we collapse $\Spec k \times \mathcal{Y}$ to the base-point and then the image of $\mathcal{X}_+ \times y$ to the base-point.  We conclude that there is an isomorphism of spaces $\mathcal{X}_+ \sma \mathcal{Y} \cong (\mathcal{X} \times \mathcal{Y})/(\mathcal{X} \times y)$.  To conclude, we simply observe that $\mathcal{X} \times \mathcal{Y}/(\mathcal{X} \times y)$ modulo the image of $x \times \mathcal{Y}$ is $\mathcal{X} \sma \mathcal{Y}$.
\end{proof}

\begin{thm}
\label{thm:main}
For any base ring $k$, there is an isomorphism in $\hop{k}$ of the form $Q_{2n} \isomt {\pone}^{\sma n}$.
\end{thm}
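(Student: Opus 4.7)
The plan is to argue by induction on $n$, using Corollary \ref{cor:cofibersequence} together with Proposition \ref{prop:smashproduct} to perform the induction step. The base case $n = 1$ is precisely Proposition \ref{prop:homotopyofq2}, which gives $Q_2 \isomt \pone$.

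For the induction step, assume $Q_{2(n-1)} \isomt \pone^{\sma(n-1)}$ in $\hop{k}$. Corollary \ref{cor:cofibersequence} furnishes a cofiber sequence
\[
\pone \longrightarrow \pone \sma (Q_{2n-2})_+ \longrightarrow Q_{2n} \longrightarrow \cdots,
\]
and the construction preceding Lemma \ref{lem:cofiberofuv} identifies the first arrow as the $\pone$-smash of the pointed map $\iota : S^0_s \to (Z_n)_+$ which sends the non-base-point to the distinguished point ``$0$'' of $Z_n$, combined with the $\aone$-weak equivalence $(Z_n)_+ \isomt (Q_{2n-2})_+$ coming from the projection. This is exactly the scenario of Proposition \ref{prop:smashproduct} with $\mathcal{X} = Q_{2n-2}$ (pointed by ``$0$'') and $\mathcal{Y} = \pone$, so that proposition identifies the cofiber as $Q_{2n-2} \sma \pone$. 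Uniqueness of homotopy cofibers therefore gives
\[
Q_{2n} \isomt Q_{2n-2} \sma \pone \isomt \pone^{\sma(n-1)} \sma \pone = \pone^{\sma n},
\]
completing the induction.

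The preparatory results are formulated over $\Z$, but the passage to an arbitrary base ring $k$ is routine: the quadrics $Q_{2m}$, the auxiliary subschemes $Z_n$, $U_n$, $E_2$, the trivializations of the relevant normal bundles, the distinguished base-points, and the homotopy purity equivalences all originate from $\Spec \Z$ and are compatible with base change, and Nisnevich descent and purity are available over an arbitrary quasi-compact quasi-separated base in the framework adopted in the preliminaries. Pulling back along $\Spec k \to \Spec \Z$ therefore propagates the equivalence to every $k$. The only substantive point in the whole argument is the identification of the first map in the cofiber sequence with the $\pone$-smash of $\iota$; this identification was arranged carefully in the construction feeding into Corollary \ref{cor:cofibersequence}, so once it is in hand the induction runs mechanically, and I expect no further obstacle.
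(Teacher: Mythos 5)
Your proof is correct and matches the paper's argument exactly: both reduce to $k = \Z$ by base change, take $Q_2 \isomt \pone$ (Proposition \ref{prop:homotopyofq2}) as the base case, and combine Corollary \ref{cor:cofibersequence} with Proposition \ref{prop:smashproduct} to deduce $Q_{2n} \isomt Q_{2n-2} \sma \pone$ for the induction step. You spell out the identification of the first arrow with $\iota \sma \mathrm{id}_{\pone}$ in slightly more detail than the paper does, but the route is identical.
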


\begin{proof}
Since both sides of the weak equivalence are preserved by base-change, it suffices to prove the result for $k = \Z$.  In that case, by Proposition \ref{prop:homotopyofq2}, we know that $Q_2 \isomt {\pone}$ in $\hop{k}$.  Combining Proposition \ref{prop:smashproduct} and Corollary \ref{cor:cofibersequence}, we conclude that for every $n \geq 2$ that there is an isomorphism in $\hop{k}$ of the form $Q_{2n} \cong Q_{2n-2} \sma {\pone}$.  The result follows by a straightforward induction.
\end{proof}

\subsection{Non-geometric motivic spheres}
\label{ss:nongeometric}
\begin{prop}
\label{prop:nonrepresentability}
If $k$ is a base ring, and if $i > j$ are integers, then $S^i_s \wedge \gm{\sma j}$ does not have the $\aone$-homotopy type of a smooth scheme.
\end{prop}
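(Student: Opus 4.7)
The plan is to use motivic cohomology as a cohomological obstruction. Suppose for contradiction that $S^i_s \wedge \gm{\wedge j}$ is $\aone$-weakly equivalent in $\hop{k}$ to a pointed smooth $k$-scheme $(X,x)$. The idea is that the motivic sphere has non-trivial motivic cohomology in a bidegree $(p,q)$ that lies outside the range where motivic cohomology of smooth schemes can be non-zero.

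First I would reduce to the case where the base is a perfect field. Both the formation of motivic spheres and $\aone$-weak equivalence are preserved under base change along any ring map $k \to \ell$ (using \cite[Appendix C]{Hoyois} for the appropriate version of the $\aone$-homotopy category over a quasi-compact quasi-separated base), so base-changing along a map to a suitable residue field (or further to its perfection) preserves a hypothetical $\aone$-weak equivalence $X_\ell \simeq_{\aone} S^i_s \wedge \gm{\wedge j}$, with $X_\ell$ still smooth. So we may assume $k$ is a perfect field.

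Next I would apply the motivic Eilenberg--MacLane functor to obtain an $\aone$-invariant cohomology theory $H^{p,q}(-,\Z)$ on $\Spc_k$. A standard computation gives $\tilde{H}^{i+j,j}(S^i_s \wedge \gm{\wedge j},\Z) \cong \Z$. Transported across the hypothetical $\aone$-weak equivalence, this forces $\tilde{H}^{i+j,j}(X_+, \Z) \ne 0$.

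Finally I would invoke Voevodsky's identification $H^{p,q}(Y,\Z) \cong CH^q(Y,\, 2q-p)$ for $Y$ smooth over a perfect field \cite{MVW}, together with the vanishing of Bloch's higher Chow groups in negative simplicial degree. This yields $H^{p,q}(Y,\Z) = 0$ whenever $p > 2q$. Because $i > j$, our bidegree $(p,q)=(i+j,j)$ satisfies $p = i+j > 2j = 2q$; applied both to $X$ and to $\Spec k$, this forces $\tilde{H}^{i+j,j}(X_+,\Z) = 0$, contradicting the previous paragraph.

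The main obstacle is the reduction step: one must check that a putative $\aone$-equivalence survives base change and that the vanishing range for motivic cohomology on smooth schemes over a perfect field (in the guise of Voevodsky's higher Chow group identification) is available in the form needed. Once these points are handled, the bidegree count $p > 2q \Leftrightarrow i > j$ produces the contradiction mechanically.
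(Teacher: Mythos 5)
Your proposal is essentially the same as the paper's proof: both reduce to a perfect (the paper takes algebraically closed) base field, detect the sphere by its motivic cohomology in bidegree $(i+j,j)$, and invoke the vanishing $H^{p,q}(Y,\Z)=0$ for smooth $Y$ when $p>2q$, which is exactly \cite[Theorem 19.3]{MVW} read through the higher Chow group comparison as you do.
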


\begin{proof}
Assume to the contrary that $X$ is a smooth $k$-scheme having the $\aone$-homotopy type of $S^i_s \wedge \gm{\sma j}$.  By picking a a geometric point of $\Spec k$, we can, without loss of generality, assume $k$ is an algebraically closed field, in particular perfect.  Then, by $\aone$-representability of motivic cohomology (see, e.g., \cite[\S 2]{VRed}), we know that for arbitrary integers $p,q$
\[
H^{p,q}(X,\Z) = [X,K(\Z(q),p)]_{\aone}.
\]
Next, observe that by the cancellation theorem \cite[Corollary 4.10]{VCancellation} and the (simplicial) suspension isomorphism, there is a non-trivial morphism $S^i_s \wedge \gm{\wedge j} \to K(\Z(j),i+j)$, giving a non-trivial class in $H^{i+j,j}(X,\Z)$.  However, if $X$ is a smooth scheme, and $i > j$, then this latter group must vanish by \cite[Theorem 19.3]{MVW}.
\end{proof}

\begin{rem}
One can obtain another more ``homotopic" proof of the proposition at the expense of introducing slightly different terminology; we freely use the conventions of \cite{AsokFaselSpheres} and some terminology from \cite{MField}.  Consider an Eilenberg-Mac Lane space $K(\K^M_j,i)$, i.e., a space with exactly $1$ non-vanishing $\aone$-homotopy sheaf in degree $j$, in which degree it is isomorphic to the unramified Milnor K-theory sheaf $\K^M_j$.  If $X$ is a smooth scheme, the explicit Gersten resolution of $\K^M_j$ shows that $H^i(X,\K^M_j)$ vanishes for $i > j$.  On the other hand, there is a non-trivial (pointed) morphism $S^i_s \wedge \gm{\sma j} \to K(\K^M_j,i)$.  Indeed, by \cite[Theorem 6.13]{MField}, we have $\bpi_{i,j}^{\aone}(K(\K^M_j,i)) \cong (\K^M_j)_{-j}$ and by applying, e.g., \cite[Lemma 2.7]{AsokFaselSpheres} we deduce $(\K^M_j)_{-j} \cong \Z$.
\end{rem}

The following conjecture summarizes our expectations regarding representability of the remaining motivic spheres by smooth schemes.

\begin{conj}
\label{conj:nonrepresentability}
If $k$ is a base ring, and if $i,j \in {\mathbb N}$ with $i < j-1$, the sphere $S^i_s \wedge \gm{\sma j}$ does not have the $\aone$-homotopy type of a smooth $k$-scheme.
\end{conj}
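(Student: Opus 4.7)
The plan is to extend the argument of Proposition~\ref{prop:nonrepresentability} by producing a cohomological or homotopical invariant that any smooth $k$-scheme representing $S^i_s \wedge \gm{\wedge j}$ would have to carry, but that is forbidden on smooth schemes once $j - i \geq 2$. As a first step, pulling back along a geometric point, exactly as in the proof of Proposition~\ref{prop:nonrepresentability}, lets me assume $k$ is an algebraically closed field.

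The naive attempt, mimicking Proposition~\ref{prop:nonrepresentability} by invoking $H^{p,q}(X, \Z) = 0$ for $p > 2q$ on smooth $X$ via \cite[Theorem 19.3]{MVW}, fails immediately: the fundamental class of the sphere sits in bidegree $(p,q) = (i+j, j)$ with $p < 2q$ whenever $i < j - 1$, so integral motivic cohomology cannot see this range. I would therefore replace integral motivic cohomology by a finer invariant. Two natural candidates are (i) the tower of Milnor operations $Q_n$ and associated secondary operations on mod-$\ell$ motivic cohomology, and (ii) the first non-trivial $k$-invariant of the $\aone$-Postnikov tower of $S^i_s \wedge \gm{\wedge j}$. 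For (ii), Morel's connectivity theorem gives $\bpi_n^{\aone}(S^i_s \wedge \gm{\wedge j}) = 0$ for $n < i$ and $\bpi_i^{\aone}(S^i_s \wedge \gm{\wedge j}) \cong \K^{\MW}_j$, so the plan is to extract a secondary cohomology operation, built from the first non-trivial $k$-invariant, that is visibly non-zero on the sphere's fundamental class but must vanish on every smooth model $X$ by combining the Rost--Schmid resolution with the dimension bound $H^p_{\Nis}(X, \mathcal{F}) = 0$ for $p > \dim X$ valid for any strictly $\aone$-invariant sheaf $\mathcal{F}$. The output of such an operation lives in a bidegree that the sphere simply does not have room to accommodate, which is the desired contradiction.

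The principal obstacle is the scarcity of complete computations of $\bpi_{i+1}^{\aone}(S^i_s \wedge \gm{\wedge j})$ and of its higher analogues in the range $j - i \geq 2$: only a handful of these sheaves are currently accessible (through work of Morel, Asok--Fasel, and R\"ondigs--Spitzweck--\O stv\ae r), and extracting from them a \emph{single} universal secondary invariant that obstructs every pair $(i,j)$ with $i < j - 1$, rather than handling only sporadic low-dimensional cases, is precisely what keeps the statement at the level of a conjecture. A successful attack will likely require combining Morel's structure theory with either motivic Adams-spectral-sequence input or a slice-filtration analysis of $S^i_s \wedge \gm{\wedge j}$, both significantly more involved than the one-line integral-cohomology argument behind Proposition~\ref{prop:nonrepresentability}.
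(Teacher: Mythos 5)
The statement you were asked to prove is stated as Conjecture~\ref{conj:nonrepresentability} in the paper; the authors do \emph{not} prove it, and explicitly present it as an open expectation. There is therefore no proof in the paper to compare against, and no proof in your proposal either --- you candidly acknowledge this yourself. Your diagnosis of why the argument from Proposition~\ref{prop:nonrepresentability} fails in the range $i < j-1$ is correct: the would-be fundamental class sits in bidegree $(p,q) = (i+j,j)$, and the Mazza--Voevodsky--Weibel vanishing $H^{p,q}(X,\Z) = 0$ for $p > 2q$ on smooth $X$ kicks in only when $i > j$, so integral motivic cohomology carries no obstruction in the conjectural range (in fact already for $i = j$ or $i = j-1$, where the paper is silent). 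Your survey of candidate replacements --- Milnor operations and $k$-invariants of the $\aone$-Postnikov tower, using Morel's identification $\bpi_i^{\aone}(S^i_s \wedge \gm{\wedge j}) \cong \K^{\MW}_j$ together with the cohomological dimension bound for strictly $\aone$-invariant sheaves --- is the natural place to look, and your honest assessment that the scarcity of computed higher $\aone$-homotopy sheaves of motivic spheres is precisely what keeps the statement conjectural matches the state of the art. In short: you correctly recognized this as an unproved conjecture and gave a sensible, well-informed sketch of what a proof might require, which is the appropriate response here.
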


\section{$\aone$-contractible subvarieties of quadrics}
\label{s:contractibles}
A smooth $k$-scheme $X$ is $\aone$-contractible if the structure map $X \to \Spec k$ is an isomorphism in $\ho{k}$, i.e., an $\aone$-weak equivalence.  The affine space ${\mathbb A}^n_k$ is $\aone$-contractible by construction of the $\aone$-homotopy category.  An {\em exotic $\aone$-contractible smooth scheme} is an $\aone$-contractible smooth scheme that is not isomorphic to an affine space.  In \cite{ADContractible} the first two authors constructed many exotic $\aone$-contractible smooth schemes as quotients of an affine space by a free action of the additive group $\ga$ (or, more generally, free actions of unipotent groups on affine spaces).  All of the examples constructed in \cite{ADContractible} could be realized as open subschemes of affine schemes with complement of codimension $\leq 2$.

Let $E_n \subset Q_{2n}$ be the closed subscheme defined by $x_1 = \cdots = x_n = 0$ and $z = -1$.  Observe that $E_n \cong {\mathbb A}^n$ and has codimension $n$ in $Q_{2n}$.  Set
\[
X_{2n} := Q_{2n} \setminus E_n.
\]
It is straightforward to check that $X_2 \cong {\mathbb A}^2$, and we recalled in the proof of Proposition \ref{prop:homotopyofq4} that $X_4$ is $\aone$-contractible.  The main results of this section are Theorem \ref{thm:newaonecontractibles}, which shows that $X_{2n}$ is $\aone$-contractible for $n \geq 1$, and Corollary \ref{cor:notaunipotentquotient}, which establishes that $X_{2n}$ is {\em not} a unipotent quotient of affine space for $n \geq 3$.

\subsection{On the $\aone$-contractibility of $X_{2n}$}
\label{ss:aonecontractibility}
If $k = \cplx$, one can check that $X_{2m}(\cplx)$ is a contractible complex manifold.  Over $\cplx$, the variety $Q_{2m}$ is isomorphic to the variety defined by $\sum_{i=1}^{2m+1} x_i^2 = 1$.  Wood explains \cite[\S 2]{WoodQuad} how to identify the last variety with the tangent bundle of a sphere.  Under this isomorphism, the variety $E_m$ can be identified with the tangent space at a point.  The complement of $E_m$ is then a contractible topological space diffeomorphic to $\real^{4m}$.  Here is the generalization of this result to $\aone$-homotopy theory.

\begin{thm}
\label{thm:newaonecontractibles}
If $k$ is a base ring, then the variety $X_{2n}$ is $\aone$-contractible over $\Spec k$.
\end{thm}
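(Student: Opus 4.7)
The plan is to proceed by induction on $n \ge 1$.  For the base case $n = 1$, a direct computation shows that $\varphi \colon \aone^2 \to Q_2$, $(a,b) \mapsto (a,\, b(ab+1),\, ab)$, is an isomorphism onto $X_2$: its image satisfies $x_1 y_1 = ab(ab+1) = z(z+1)$ with $z = ab$, and avoids $E_1$ because $a = 0$ forces $z = 0 \ne -1$; the inverse sends $(x_1, y_1, z) \mapsto (x_1, b)$, where $b$ is the regular function on $X_2 = D_{x_1} \cup D_{z+1}$ equal to $z/x_1$ on $D_{x_1}$ and $y_1/(z+1)$ on $D_{z+1}$ (the quadric relation ensures agreement on the overlap).

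For the inductive step, assuming $X_{2n-2}$ is $\aone$-contractible, I would decompose $X_{2n}$ into the open subscheme $U_n := D_{x_n} \cap X_{2n}$ and its closed complement $Z' := \{x_n = 0\} \cap X_{2n}$.  Direct computation gives $U_n \cong \aone^{2n-1} \times \gm{}$ (solving the quadric for $y_n$) and $Z' \cong X_{2n-2} \times \aone$ (since $E_n \cap \{x_n = 0\} = E_{n-1} \times \aone$), while the normal bundle $\nu_{Z'/X_{2n}}$ is trivial of rank one with coordinate $x_n$.  Morel--Voevodsky homotopy purity then yields a cofiber sequence
\[
U_n \longrightarrow X_{2n} \longrightarrow Th(\nu_{Z'/X_{2n}}) \simeq \pone \wedge Z'_+
\]
in $\hop{k}$, and the inductive hypothesis combined with Proposition \ref{prop:smashproduct} simplifies the Thom space to $\pone$.

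The critical extra input is a pointed naive $\aone$-null-homotopy of the inclusion $\iota \colon U_n \hookrightarrow X_{2n}$, constructed in two stages.  First, the scaling $H' \colon U_n \times \aone \to U_n$ sending $(x_1, \ldots, y_{n-1}, z, x_n, t)$ to $((1-t)x_1, \ldots, (1-t)y_{n-1}, (1-t)z, x_n)$ is regular on $U_n$ (the dependent coordinate $y_n = (z(z+1) - \sum_{i<n} x_i y_i)/x_n$ remains regular because $x_n$ is invertible) and contracts $U_n$ onto the $\gm{}$-subscheme $G := \{x_i = 0\ (i<n),\ y_j = 0,\ z = 0\}$.  Second, $H \colon G \times \aone \to X_{2n}$, $(\lambda, t) \mapsto (0, \ldots, 0, (1-t)\lambda + t, 0, \ldots, 0)$, lies in $X_{2n}$ (the quadric relation is trivially satisfied and $z = 0 \ne -1$) and contracts $G$ to the base point $p := (0, \ldots, 0, x_n = 1, 0, \ldots, 0)$ while fixing $\lambda = 1$.

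Since $\iota$ is $\aone$-null-homotopic in the pointed category, standard cofiber-sequence theory in pointed model categories (\cite[Ch.~6]{Hovey}) gives $X_{2n}/U_n \simeq X_{2n} \vee \Sigma_s U_n \simeq X_{2n} \vee \pone$, so combined with the purity identification we obtain $\pone \simeq X_{2n} \vee \pone$ in $\hop{k}$.  The main obstacle lies in the final step of extracting $X_{2n} \simeq_{\aone} \Spec k$ from this wedge-cancellation; a cohomological route suffices: the wedge equivalence splits reduced motivic cohomology as a direct sum, forcing $\tilde{H}^{*,*}(X_{2n}, \Z) = 0$, and combined with the $\aone$-connectedness of the smooth connected affine $k$-scheme $X_{2n}$ and a motivic Hurewicz/Whitehead-style argument, one concludes $\aone$-contractibility, completing the induction.
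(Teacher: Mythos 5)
Your decomposition of $X_{2n}$ into $U_n = D_{x_n} \cap X_{2n}$ and its closed complement $Z' \cong X_{2n-2} \times \aone$, the trivialization of the normal bundle, the application of homotopy purity, and the base case $X_2 \cong \aone^2$ all match the paper's proof.  Your explicit two-stage naive $\aone$-homotopy contracting $U_n$ inside $X_{2n}$ is a nice alternative to the paper's device of exhibiting the copy of $\gm{} \subset U_n$ as sitting inside a copy of $\aone \subset X_{2n}$, but the two things are morally equivalent ways of seeing that the inclusion $U_n \hookrightarrow X_{2n}$ is $\aone$-null.

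The genuine gap is in your final step.  From the null-homotopy you correctly obtain $\pone \simeq X_{2n}/U_n \simeq X_{2n} \vee \Sigma^1_s U_n \simeq X_{2n} \vee \pone$ in $\hop{k}$, but you cannot simply ``cancel the $\pone$.''  The abstract wedge decomposition gives no control over the map $\pone \to X_{2n} \vee \pone \simeq \pone$ induced by the wedge summand inclusion, so it does not a priori split $\tilde H^{*,*}(X_{2n})$ off as a direct summand in the way you need.  Moreover, the proposed ``motivic Hurewicz/Whitehead'' route is not available in the generality required: the theorem is proved over an arbitrary base ring (the paper reduces to $k = \Z$ by base change), and Morel's unstable $\aone$-Hurewicz and Whitehead theorems are stated over a perfect field and concern $\aone$-homology with Milnor--Witt coefficients, not motivic cohomology; one would also first have to establish $\aone$-connectedness of $X_{2n}$, which does not follow from it being a smooth connected affine scheme.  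The paper sidesteps all of this.  It applies the octahedral axiom (Proposition \ref{prop:octahedral}) to the composite $\gm{} \to \aone \to X_{2n}$ to produce a cofiber sequence $\pone \xrightarrow{\iota} (X_{2n-2})_+ \sma \pone \to X_{2n}$ in which the map $\iota$ is identified, via the compatibility of the purity isomorphism with pullback squares, as the $\pone$-suspension of the map $S^0_s \to (X_{2n-2})_+$ hitting the base point.  Proposition \ref{prop:smashproduct} then computes the cofiber of this very map to be $X_{2n-2} \sma \pone$, so one concludes $X_{2n} \simeq X_{2n-2} \sma \pone$ directly --- no cancellation required --- and the induction closes.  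If you want to rescue your argument, the missing ingredient is exactly this identification of the splitting map $\Sigma^1_s U_n \to X_{2n}/U_n \simeq \pone$ as an equivalence, which again comes down to tracking the map $\iota$ through the purity isomorphism rather than treating the wedge decomposition abstractly.
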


\begin{proof}
Once more, since both sides are preserved by base-change, it suffices to prove the result with $k = \Z$.  We continue with the notation of Subsection \ref{ss:q2n}.  The proof is essentially a repeat of the Proof of Theorem \ref{thm:main}, so we will explain only the changes required.  Consider the closed subvariety $Z_n \subset Q_{2n}$ defined by $x_n = 0$.  We saw that $Z_n \cong Q_{2n-2} \times \aone$ and that the normal bundle to $Z_n$ comes equipped with a specified trivialization.  Observe that, by construction $E_n \subset Z_n$ and therefore we see that $X_{2n} := Q_{2n} \setminus E_n$ contains $U_n := Q_{2n} \setminus Z_n$ as an open subscheme.  Moreover, the subvariety $Z_n \setminus E_n$ is isomorphic to $X_{2n-2} \times \aone$.

Recall that $U_n \cong {\mathbb A}^{2n-1} \times \gm{}$ with coordinates $x_1,\ldots,x_{n-1},y_1,\ldots,y_{n-1},z$ on the first factor and $x_n$ on the last factor.  The closed subscheme $\gm{} \hookrightarrow U_n$ is defined by $x_1 = \cdots = x_{n-1} = y_1 = \cdots y_{n-1} = 0$ and $z = 0$.  The intersection of $X_{2n}$ and $Q_2$, viewed as a subvariety of $Q_{2n}$ by imposing $x_1 = \cdots = x_{n-1} = y_1 = \cdots = y_{n-1} = 0$ is isomorphic to ${\mathbb A}^2$.  The copy of $\aone \subset Q_{2}$ identified by further imposing the conditions $y_n = z = 0$ is disjoint from $E_n$ and therefore contained in $X_{2n}$.  Note that the point ``0" of $Q_{2n}$ is contained in $X_{2n}$.

We therefore have a diagram of the form
\[
\gm{} \stackrel{u}{\longrightarrow} \aone \stackrel{v}{\longrightarrow} X_{2n}.
\]
We understand the cofiber of this composite map using Proposition \ref{prop:octahedral} again.  In particular, repeating the proofs of Lemma \ref{lem:cofiberofuv} and Corollary \ref{cor:cofibersequence} with all instances of $Q_{2i}$ replaced by $X_{2i}$, we deduce the existence of a cofiber sequence of the form
\[
{\pone} \stackrel{\iota}{\longrightarrow} (X_{2n-2})_{+} \sma {\pone} \longrightarrow X_{2n} \longrightarrow \cdots
\]
Now, applying Proposition \ref{prop:smashproduct}, we conclude that the cofiber of $\iota$ is $\aone$-weakly equivalent to $X_{2n-2} \sma \pone$.  The induction hypothesis guarantees that $X_{2n-2}$ is $\aone$-contractible, so the smash product $X_{2n-2} \sma \pone$ is $\aone$-contractible as well.  Since $X_{2n-2} \sma \pone$ is $\aone$-weakly equivalent to $X_{2n}$, it follows that $X_{2n}$ is $\aone$-contractible.
\end{proof}

\begin{rem}
\label{rem:alternativeproof}
Theorem \ref{thm:newaonecontractibles} gives an alternative proof of Theorem \ref{thm:main}, this time repeating the argument of Proposition \ref{prop:homotopyofq4}.  In this case, the map $Q_{2n} \to Th(\nu_{E_n/Q_{2n}})$ obtained by collapsing $X_{2n}$ to a point and then using the homotopy purity theorem is an $\aone$-weak equivalence by properness of the $\aone$-local model structure, as explained in the proof of Proposition \ref{prop:homotopyofq4}.  The explicit trivialization of the normal bundle to $E_n$ that arises by its presentation as a (connected component of a) complete intersection defined by a regular sequence of $Q_{2n}$ yields an isomorphism $Th(\nu_{E_n/Q_{2n}}) \cong {\pone}^{\sma n} \wedge (E_n)_+$, and the projection map $E_n \to \Spec k$ then yields an $\aone$-weak equivalence $Q_{2n} \isomt {\pone}^{\sma n}$ that is perhaps slightly more explicit than the one arising in the proof of Theorem \ref{thm:main}.
\end{rem}

\subsection{$\aone$-contractibles that are not unipotent quotients}
\label{ss:nonunipotentquotient}
We will now see that for $m \geq 3$, the varieties $X_{2m}$ studied in Theorem \ref{thm:newaonecontractibles} cannot be realized as quotients of affine space by free actions of unipotent groups $U$.  We begin by proving a general ``excision" style result (Theorem \ref{thm:unipotentexcision}) for (Zariski) cohomology with coefficients in a unipotent group; the main result then follows from this excision result when applied in the special case of degree $1$ cohomology (Corollary \ref{cor:codim3quotient}).  Indeed, the degree $1$ cohomology group in question can be identified as the (pointed) set of $U$-torsors on the scheme $X$.  We warn the reader that, contrary to ``understood" prior conventions, the letter $U$ in this section stands for a unipotent group, as opposed to an open subscheme; we refer the reader to \cite[\S 15 (Definition 15.1)]{Borel} for general information on split unipotent groups.

\begin{thm}[Excision for unipotent groups]
\label{thm:unipotentexcision}
Let $d \geq 3$ be an integer.  Suppose $X$ is a regular scheme over a field and $j: W \hookrightarrow X$ is an open immersion whose closed complement has codimension $\geq d$.  Suppose $U$ is a (not necessarily commutative) split unipotent group.  The restriction map
\[
j^*: H^1_{\Zar}(X,U) \longrightarrow H^1_{\Zar}(W,U)
\]
is a pointed bijection, i.e., every $U$-torsor on $W$ extends uniquely to a $U$-torsor on $X$.
\end{thm}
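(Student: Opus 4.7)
The plan is to induct on $\dim U$ and reduce to the base case $U = \ga$, which will be handled by local cohomology for $\O_X$. Since $U$ is split unipotent over a field, it is nilpotent (it embeds into the upper triangular unipotent matrices of some $\GL_n$), so the last nontrivial term of its lower central series is a nontrivial central closed subgroup that is moreover split commutative unipotent; it therefore contains a copy of $\ga$. Pick such a central subgroup $A \cong \ga$ and set $V := U/A$, obtaining a central extension $1 \to A \to U \to V \to 1$ in which $V$ is again split unipotent, of strictly smaller dimension.

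\textbf{Base case.} For $U = \ga$ we have $H^i_{\Zar}(-, \ga) = H^i_{\Zar}(-, \O)$. Regularity of $X$ ensures that $\O_X$ is Cohen--Macaulay, so the local cohomology sheaves $\underline{H}^i_Z(\O_X)$ along $Z := X \setminus W$ vanish for $i < \operatorname{codim}_X(Z)$. The excision long exact sequence
\[
\cdots \to H^i_Z(X, \O_X) \to H^i(X, \O_X) \to H^i(W, \O_X) \to H^{i+1}_Z(X, \O_X) \to \cdots
\]
then yields isomorphisms $H^i(X, \ga) \isomt H^i(W, \ga)$ for $i \leq d - 2$ and injectivity for $i = d - 1$. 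Since $d \geq 3$, the restriction map is bijective in degrees $0$ and $1$ and injective in degree $2$.

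\textbf{Inductive step and main obstacle.} The central extension yields the functorial Serre exact sequence of pointed sets
\[
H^0(Y, V) \to H^1(Y, A) \to H^1(Y, U) \to H^1(Y, V) \to H^2(Y, A),
\]
and restriction from $X$ to $W$ produces a commutative ladder of such sequences. The leftmost vertical map is bijective because $V \cong {\mathbb A}^{\dim V}$ as a scheme, and regular maps $W \to {\mathbb A}^m$ extend uniquely to $X$ by Hartogs' theorem (the complement of $W$ has codimension $\geq 2$ and $X$ is regular). The second vertical map is bijective by the base case, the fourth is bijective by the inductive hypothesis, and the fifth is injective by the base case. The non-abelian five lemma for exact sequences of pointed sets (as in Serre, \emph{Cohomologie Galoisienne}, I.5.5) then forces $H^1(X, U) \to H^1(W, U)$ to be bijective. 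The main obstacle is the non-abelian bookkeeping: one must verify that the Serre exact sequence extends one further step to $H^2(A)$, which requires precisely centrality of $A$ in $U$, and the hypothesis $d \geq 3$ (rather than merely $d \geq 2$) is used exactly once, to secure injectivity of the restriction map on $H^2$ for $\ga$.
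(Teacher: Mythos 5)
Your base case is essentially the paper's argument in slightly different clothing: the paper uses the Cousin complex of $\O_X$ (an explicit flasque resolution, Cohen--Macaulay entering through the vanishing of low-degree local cohomology modules at points of small codimension) and compares Cousin complexes under $j_*$, whereas you phrase this via the local cohomology sheaves $\underline{H}^i_Z(\O_X)$ and the excision long exact sequence. These are interchangeable and both correct.

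The inductive step, however, is genuinely different. The paper works ``from the top'': choosing a normal $U' \triangleleft U$ with $U/U' \cong \ga$, it passes to the total space $W' := P \times^U \ga = P/U'$ of the associated $\ga$-torsor, notes that $W'$ is again regular with complement of codimension $\geq d$ inside the extended $\ga$-torsor $X' \to X$, and then applies the induction hypothesis to the $U'$-torsor $P \to W'$ over the new base $X'$. This is a purely geometric d\'evissage that never invokes $H^2$ or nonabelian exact sequences. You work ``from the bottom'': choosing a central $A \cong \ga$, you stay over the original base $X$ and instead push the cohomological bookkeeping one degree higher, using the Serre sequence
\[
H^0(V) \longrightarrow H^1(A) \longrightarrow H^1(U) \longrightarrow H^1(V) \longrightarrow H^2(A)
\]
available for central extensions, plus a five-lemma-style diagram chase. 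Both routes are valid; the paper's avoids any discussion of $H^2$ or of twisting at the cost of changing the base scheme at each step, while yours keeps the base fixed at the cost of more delicate nonabelian bookkeeping.

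Two remarks on your version. First, the centrality of your $\ga$ is more elementary than the lower-central-series detour suggests: if $1 = U_0 \subset U_1 \subset \cdots$ is the defining filtration of a split unipotent group, then $U_1 \cong \ga$ is already central, because conjugation gives a homomorphism $U \to \Aut_{\mathrm{gp}}(\ga) \cong \gm{}$ which must be trivial for $U$ unipotent; this sidesteps any worry (relevant over imperfect fields) about whether a subgroup of a split unipotent group remains split. Second, ``the non-abelian five lemma forces bijectivity'' hides the real work: $H^1(A)$ acts on $H^1(U)$ with orbits the fibers of $H^1(U)\to H^1(V)$, the stabilizer of a class $\eta$ is the image of a coboundary from $H^0$ of the \emph{twisted} form ${}_\eta V$, and for the injectivity half you must extend sections of this twist from $W$ to $X$ (which works, since ${}_\eta V$ is Zariski-locally an affine-space bundle, so Hartogs applies locally and the local extensions glue). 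Similarly, in the surjectivity half the obstruction to lifting from $H^1(X,V)$ lands in $H^2$ of a twist of $A$, which equals $H^2(X,A)$ precisely because $A$ is central so $V$ acts trivially on it. With those details supplied, your argument goes through.
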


\begin{proof}
Since $U$ is split, by definition it admits an increasing filtration by normal subgroups $1 = U_0 \subset U_1 \subset \cdots U_n = U$ with subquotients $U_{i+1}/U_{i}$ isomorphic to $\ga$.  We first show that, working inductively with respect to the dimension of $U$, it suffices to prove the result in the case $U = \ga$.

Indeed, suppose we know the result holds true for all unipotent groups of dimension $n$ and all open immersions of regular schemes with closed complement of codimension $d \geq 3$.  In that case, suppose $U$ has dimension $n+1$ and we are given a $U$-torsor $\varphi: P \to W$.  We know that there is a normal subgroup $U'$ of $U$ such that $U/U' \cong \ga$.  In that case, the quotient map $U \to \ga$ gives rise to a $\ga$-torsor $W' := P \times^{U} \ga \to W$; note that $W'$ is again regular.  The pullback of $\varphi$ along $W' \to W$ gives a $U'$-torsor on $W'$.  Now, again by induction, we also know that the $\ga$-torsor $W' \to W$ extends (uniquely) to a $\ga$-torsor $X' \to X$; again, $X'$ is regular.  Moreover, working over a Zariski trivialization of $X' \to X$, we conclude that $W' \subset X'$ is open with closed complement having codimension $\geq 3$ as well.  However, the induction hypothesis guarantees that the $U'$-torsor over $W'$ extends uniquely to a $U'$-torsor on $X'$, and this extension provides the required extension of $\varphi$ over $W$.

Now, assume $U = \ga$.  In this case, one knows that by definition $H^q(X,\ga) = H^q(X,\O_X)$.  We will show that the map
\[
j^*: H^q(X,\O_X) \longrightarrow H^q(W,\O_W)
\]
induced by pull-back along $j$ is an isomorphism for $q \leq d-2$.

To prove the last fact, we use the Cousin complex for $\O_X$ as discussed in, e.g., \cite[IV.2]{HartshorneRD}.  Under the assumption that $X$ is regular, the Cousin complex provides an injective resolution of $\O_X$ (see especially {\em ibid.} p. 239).  If $X^{(p)}$ denote the set of codimension $p$ points in $X$ (recall this means $\dim \O_{X,x} = p$), the $p$-th term of the Cousin complex is
\[
\coprod_{x \in X^{(p)}} (i_x)_*(H^p_x(\O_X)).
\]

Since $j$ is an open immersion, it is an affine morphism and the Leray spectral sequence for $j$ degenerates to yield isomorphisms $j^*: H^q(W,\O_W) \isomt H^q(X,j_*\O_W)$. Adjunction gives rise to a morphism $\O_X \to j_* \O_W$. The push-forward of the Cousin complex for $\O_W$ to $X$ by $j$ remains a flasque resolution of $j_*\O_W$ on $X$.  Since the inclusion $W \hookrightarrow X$ is, by definition, an isomorphism on points of codimension $d-1$, it follows that the cokernel of the induced morphism of Cousin complexes, which provides a flasque resolution of the cone of the map $\O_X \to j_* \O_X$, only depends on points of codimension $\geq d$ and the isomorphism of the theorem statement follows immediately from the long exact sequence in cohomology.
\end{proof}

\begin{cor}
\label{cor:codim3quotient}
If $X$ is a regular affine scheme and $W \subset X$ is an open subscheme whose complement has codimension $d \geq 3$, then every torsor under a split unipotent group over $W$ is trivial.  In particular, the varieties $X_{2m}$ cannot be realized as quotients of affine space by free actions of split unipotent groups.
\end{cor}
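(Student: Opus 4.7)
The plan is to deduce the corollary in two steps directly from Theorem \ref{thm:unipotentexcision}.

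\emph{Step 1 (triviality of torsors on $W$).} Given a $U$-torsor $P \to W$ with $U$ split unipotent, Theorem \ref{thm:unipotentexcision} extends $P$ uniquely to a $U$-torsor $\tilde{P} \to X$. I would then show that any split unipotent torsor on an affine scheme is trivial by induction on the length of a composition series $1 = U_0 \subset U_1 \subset \cdots \subset U_n = U$ with $U_{i+1}/U_i \cong \ga$. The base case $U = \ga$ is immediate because $H^1_{\Zar}(X, \ga) = H^1(X, \O_X) = 0$ for $X$ affine. For the induction step, push $\tilde{P}$ out along the quotient $U \to U/U_1$ to obtain a $(U/U_1)$-torsor on $X$, which admits a section by the induction hypothesis; pulling $\tilde{P}$ back along that section yields a $U_1 = \ga$-torsor on $X$, again trivial, and lifting gives a section of $\tilde{P}$. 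Restricting the trivial extension back along $W \hookrightarrow X$ then trivializes $P$.

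\emph{Step 2 (application to $X_{2m}$).} By construction, $X_{2m}$ is the complement in the smooth affine quadric $Q_{2m}$ of $E_m \cong \A^m$, a closed subscheme of codimension $m \geq 3$. Thus the pair $(Q_{2m}, X_{2m})$ satisfies the hypotheses of Step 1. Suppose for contradiction that $X_{2m} \cong \A^N/U$ for some free action of a split unipotent group $U$; then $\A^N \to X_{2m}$ is a $U$-torsor, hence trivial by Step 1, so $\A^N \cong U \times X_{2m}$ as schemes. Since $U \cong \A^{\dim U}$ as a variety, picking any $k$-point of $U$ realizes $X_{2m}$ as a closed subscheme of $\A^N$; in particular $X_{2m}$ would be affine.

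The remaining task, and in my view the main obstacle, is producing a contradiction from the putative affineness of $X_{2m}$. For this I would invoke Hartogs' theorem: since $Q_{2m}$ is normal and $E_m$ has codimension $\geq 2$, every regular function on $X_{2m}$ extends uniquely to $Q_{2m}$, so $\Gamma(X_{2m}, \O_{X_{2m}}) = \Gamma(Q_{2m}, \O_{Q_{2m}})$ and the canonical morphism $X_{2m} \to \Spec \Gamma(X_{2m}, \O_{X_{2m}})$ factors as the open immersion $X_{2m} \hookrightarrow Q_{2m}$. If $X_{2m}$ were affine this would have to be an isomorphism, which is false as $E_m \neq \emptyset$. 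This contradiction concludes the proof.
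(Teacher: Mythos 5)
Your proof is correct and follows essentially the same route as the paper: use Theorem~\ref{thm:unipotentexcision} to identify torsors on $W$ with torsors on the affine scheme $X$, reduce triviality on affines to the $\ga$ case via the composition series (the paper compresses this into a single appeal to Serre's vanishing theorem, leaving the induction implicit), and then derive the contradiction for $X_{2m}$ from the observation that it is quasi-affine but not affine. The only cosmetic difference is in the last step: the paper notes directly that the total space $W \times U$ of a trivial torsor is quasi-affine non-affine and hence cannot be an affine space, whereas you embed $X_{2m}$ as a closed subscheme of $\A^N$ via a point of $U$ and then invoke the Hartogs argument to contradict affineness of $X_{2m}$ itself; these are the same observation phrased in two equivalent ways.
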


\begin{proof}
By Serre's vanishing theorem \cite[Th\'eor\`eme 1.3.1]{EGAIII1}, we know that all higher cohomology of a quasi-coherent sheaf on an affine scheme vanishes.  By Theorem \ref{thm:unipotentexcision}, we conclude that if $W$ is as in the the theorem statement, and if $U$ is a (split) unipotent group, then $H^1(W,U) = \ast$, so every $U$-torsor over $W$ is trivial.  In particular, the total space of any $U$-torsor over $W$ is of the form $W \times U$, which is itself a quasi-affine variety that is not affine.
\end{proof}

\begin{cor}
\label{cor:notaunipotentquotient}
If $k$ is a base ring, and $m \geq 3$ is an integer, then $X_{2m}$ is not a quotient of an affine space by the free action of a unipotent group.
\end{cor}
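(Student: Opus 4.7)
The plan is to derive the corollary as a direct application of Corollary~\ref{cor:codim3quotient} to the open immersion $X_{2m} \hookrightarrow Q_{2m}$. First, I would verify the codimension hypothesis: the subscheme $E_m$ is cut out in $Q_{2m}$ by $x_1 = \cdots = x_m = 0$ together with $z = -1$ (consistent with the defining equation $\sum x_i y_i = z(1+z)$), and parametrization by $y_1,\ldots,y_m$ exhibits $E_m \cong \aone^m$; since $\dim Q_{2m} = 2m$, its codimension is $m$, which is $\geq 3$ under the standing hypothesis. The ambient scheme $Q_{2m}$ is visibly a smooth affine $k$-scheme, so Corollary~\ref{cor:codim3quotient} applies directly to the pair $X_{2m} \subset Q_{2m}$.

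Next, I would argue by contradiction: assume a scheme-theoretic presentation $X_{2m} \cong \aone^N/U$ by the free action of a (split) unipotent $k$-group scheme $U$, so that the quotient morphism $\aone^N \to X_{2m}$ becomes a $U$-torsor. Corollary~\ref{cor:codim3quotient} then forces this torsor to be trivial, producing an isomorphism of schemes $\aone^N \cong X_{2m} \times_k U$.

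The remaining task is to extract a contradiction, which should come from the failure of $X_{2m}$ to be affine. Since $U \to \Spec k$ admits a section (the identity), the product $X_{2m} \times_k U$ can be affine only if $X_{2m}$ is; and since affineness of a $k$-scheme is preserved under base change, I would reduce to showing that $X_{2m}$ is non-affine after passing to a geometric point of $\Spec k$. Over a field, $Q_{2m}$ is integral (the polynomial $\sum x_i y_i - z(1+z)$ is irreducible), smooth---hence normal---and affine, while $E_m$ has codimension $\geq 2$; Hartogs' extension then yields $\Gamma(X_{2m},\O) = \Gamma(Q_{2m},\O)$, so an affine $X_{2m}$ would have to coincide with $Q_{2m}$, which is absurd. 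The one delicate point, and really the only ``obstacle,'' is the implicit convention that ``unipotent'' here means ``split unipotent,'' so as to fit the hypothesis of Corollary~\ref{cor:codim3quotient}; this is the convention operative throughout Subsection~\ref{ss:nonunipotentquotient}.
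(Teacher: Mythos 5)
Your reduction to Corollary~\ref{cor:codim3quotient} and the non-affineness argument (via Hartogs/extension of regular functions and passage to a geometric point) are correct and in the spirit of the paper's own proof: the paper likewise observes that a trivial torsor would make the total space $X_{2m}\times_k U$, a quasi-affine but non-affine scheme, contradicting affineness of ${\mathbb A}^N$. The codimension computation ($E_m \cong \aone^m$, codimension $m \geq 3$) is also fine.

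However, there is a genuine gap in how you handle the ``split'' hypothesis. You write that it is an ``implicit convention'' operative in Subsection~\ref{ss:nonunipotentquotient} that ``unipotent'' means ``split unipotent.'' This is not the case: the statement of Corollary~\ref{cor:notaunipotentquotient} deliberately drops the word ``split,'' and the only genuinely new content it adds beyond the ``in particular'' clause of Corollary~\ref{cor:codim3quotient} is precisely the reduction from an arbitrary unipotent $k$-group scheme to a split one. That reduction requires an argument, and the paper supplies it: choose an algebraically closed field $F$ with a map $\Spec F \to \Spec k$, base-change the putative presentation ${\mathbb A}^N/U \cong X_{2m}$; since the quotient map would be a torsor with smooth total space and smooth base (one checks smoothness of $U$ by descent from a perfect residue field), $U_F$ is a smooth unipotent group over an algebraically closed field, hence split. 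Only then can one invoke Corollary~\ref{cor:codim3quotient} (whose excision theorem genuinely uses the split filtration $1 = U_0 \subset \cdots \subset U_n = U$ with quotients $\ga$). You already pass to a geometric point to check non-affineness, so this fix costs you nothing: perform the base change first and use it for both the splitness and the non-affineness arguments. Without it, the proposal as written only proves the weaker statement about split unipotent groups already contained in Corollary~\ref{cor:codim3quotient}.
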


\begin{proof}
Pick an algebraically closed field $F$ such that $\Spec k$ has a $\Spec F$-point.  Assume that $X_{2m}$ is a quotient of ${\mathbb A}^n$ by the free action of a unipotent group.  By base change, we conclude that the unipotent group is necessarily smooth.  Base-change to $\Spec F$ then allows us to assume that the unipotent group in question is also split.  For $m \geq 3$, $X_{2m}$ has codimension $m \geq 3$ in $Q_{2m}$, so the result then follows immediately from Corollary \ref{cor:codim3quotient}.
\end{proof}

\begin{question}
If $Z \to X_{2n}$ is a Jouanolou device, then $Z$ is an affine $\aone$-contractible variety.  Is $Z \cong {\mathbb A}^m$ for some integer $m$?
\end{question}

\begin{rem}
If $k$ is a field having characteristic $p > 0$, work of N. Gupta \cite{Gupta, Gupta2} shows that there exist smooth affine $k$-varieties of every dimension $d \geq 3$ that are stably isomorphic to affine $k$-space, yet which are not isomorphic to affine $k$-space.  Such varieties are necessarily exotic smooth affine $\aone$-contractible varieties.  At the moment, we do not have any examples of exotic smooth affine $\aone$-contractible varieties over a fields having characteristic $0$.  Nevertheless, it seems reasonable to ask: if $k$ is an arbitrary field, is every smooth affine $\aone$-contractible $k$-variety a retract of an affine $k$-space?
\end{rem}

\section{Clutching functions for principal $G$-bundles on algebraic spheres}
\label{s:vectorbundles}
In this section, we provide an algebro-geometric analog of the clutching construction of vector bundles on even-dimensional spheres. As an application, we provide an explicit {\em vector bundle} yielding a generator of $\widetilde{K}_0(Q_{2n})$ for $n\geq 1$.  For $n = 1,2$, such explicit generators are given by classical geometric constructions (Hopf bundles).  For $n \geq 3$, while it is easy to write down virtual vector bundle representatives, writing down explicit vector bundles is more complicated.

The main results of this section are Theorem \ref{thm:abstractclutching}, which provides a version of the clutching construction for any smooth affine scheme $Y$ whose (simplicial) suspension has the $\aone$-homotopy type of a smooth affine scheme.  Making the abstract clutching construction explicit depends on the choice of weak equivalence $\Sigma^1_s Y$ with an affine scheme $X$.  In the special case where $Y = Q_{2n-1}$ in Theorem \ref{thm:explicitequivalence}, we construct an explicit weak equivalence $\Sigma^1_s Q_{2n-1} \to Q_{2n}$, at least under suitable additional hypotheses.  Finally in Theorem \ref{thm:concreteclutching}, we combine these two observations to obtain a version of the clutching construction that gives explicit cocycle representatives of vector bundles.

\subsection{The abstract clutching construction}
We begin by providing an abstract analog of the clutching construction in algebraic geometry.  Let us begin by quickly reviewing the topological situation, where the clutching construction is sometimes phrased as follows.  Suppose $M$ is a pointed finite CW complex, and consider the (reduced) suspension $\Sigma M$.  The usual identification $G \cong \Omega BG$ together with the loop-suspension adjunction yield a canonical bijection between sets of pointed homotopy classes of maps of the form $[M,G]_\bullet \cong [\Sigma M,BG]_{\bullet}$.  By the representability theorem for principal $G$-bundles, the forgetful map $[\Sigma M,BG]_{\bullet} \to [\Sigma M,BG]$ from based to free homotopy classes yields a surjection from $[\Sigma M,BG]_{\bullet}$ to the set of isomorphism classes of principal $G$-bundles on $\Sigma M$.  If $G$ is furthermore connected, then $BG$ is necessarily $1$-connected, then the set of pointed homotopy classes of maps $[M,G]_{\bullet}$ is actually in bijection with the set of isomorphism classes of principal $G$-bundles on $\Sigma M$.

By Theorem \ref{thm:main}, we find ourselves in an analogous situation.  Indeed, we know that $Q_{2n} \cong {\pone}^{\sma n}$ in $\hop{k}$ for $n \geq 0$.  Using the standard identification $Q_{2n-1} \cong \Sigma^{n-1}_s \gm{\sma n}$, we can then conclude that $Q_{2n}$ is $\aone$-weakly equivalent to $\Sigma^1_s Q_{2n-1}$ if $n \geq 1$, though providing an explicit $\aone$-weak equivalence of this form is more complicated.  More abstractly, we are given a (pointed) smooth affine scheme $Y$ such that $\Sigma^1_s Y$ has the $\aone$-homotopy type of a smooth affine scheme.  (If $k$ is a regular ring, using Jouanolou-Thomason devices, it suffices to assume that $\Sigma^1_s Y$ has the $\aone$-homotopy type of a smooth scheme).

We would like to link principal $G$-bundles on a smooth affine model of $\Sigma^1_s Y$ with ``clutching functions" on $Y$ itself.  As $\Sigma^1_s Y$ only has the $\aone$-homotopy type of a smooth affine scheme, we appeal to the results of \cite{AHW,AHWII} on $\aone$-representability results for principal $G$-bundles (which themselves extend results of F. Morel \cite{MField} and M. Schlichting \cite{Schlichting}).  We break the argument into three pieces along the lines of what is sketched above: (i) adjunction, to obtain a bijection of $\aone$-homotopy classes of maps, (ii) connectivity statements, to obtain a suitable surjectivity statement, and (iii) representability, to connect $\aone$-homotopy classes with actual geometric objects (i.e., morphisms on the one side and bundles on the other).  These three pieces are put together in the main result of this section, Theorem \ref{thm:abstractclutching}.

We begin by recalling some notation.  Recall that a presheaf $\mathscr{F}$ on $\Sm_k$ is called {\em $\aone$-invariant on affines} if, for any smooth affine $k$-scheme $X$, the map $\F(X) \to \F(X \times \aone)$ induced by pullback along the projection $X \times \aone \to X$ is a bijection.  Write $\Sing^{\aone}(-)$ for the singular construction of \cite[p. 87]{MV}; briefly, this space is the diagonal of the bisimplicial object obtained by considering the internal hom $\hom(\Delta^{\bullet}_k,\mathscr{X})$.  Write $R_{\Zar}$ for a Zariski fibrant replacement functor on the category of simplicial presheaves.  With this notation, we can recall the results from \cite{AHW,AHWII} that we need.

\begin{lem}
\label{lem:aonelocalitystatements}
Suppose $k$ is a base ring, $G$ is a smooth $k$-group scheme and assume that the presheaf $H^1_{\Nis}(-,G)$ on $\Sm_k$ is $\aone$-invariant on affines.
\begin{enumerate}[noitemsep,topsep=1pt]
\item The spaces $R_{\Zar} \Sing^{\aone} BG$ and $R_{\Zar} \Sing^{\aone} G$ are Nisnevich local and $\aone$-invariant.
\item For any smooth affine $k$-algebra $A$, the maps $\Sing^{\aone}BG(A) \to R_{\Zar} \Sing^{\aone} BG(A)$ and $\Sing^{\aone}G(A) \to R_{\Zar}\Sing^{\aone}G(A)$ are weak equivalences.
\item For any smooth affine $k$-algebra $A$, the canonical map $\Sing^{\aone} \mathbf{R} \Omega^1_s BG(A) \to {\mathbf R}\Omega^1_s \Sing^{\aone} BG(A)$ is a weak equivalence.
\end{enumerate}
\end{lem}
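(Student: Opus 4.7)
The plan is to derive all three parts directly from the $\aone$-representability machinery developed in \cite{AHW, AHWII}. The hypothesis that $H^1_{\Nis}(-,G)$ is $\aone$-invariant on affines is precisely the criterion identified there as the input needed for $\Sing^{\aone}BG$ to behave well with respect to the Nisnevich topology on the affine site. So the task is really one of organization: repackage those results in the form stated here.

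For part (1), the $\aone$-invariance assertion is essentially automatic. The singular construction $\Sing^{\aone}\mathscr{X}$ of any presheaf is $\aone$-invariant as a presheaf of simplicial sets, and $\aone$-invariance on the level of homotopy groups is detected sectionwise, so it is preserved by Zariski fibrant replacement. The substantive content is Nisnevich locality, and here I would follow the two-step strategy of \cite{AHW}. First, one shows that $\Sing^{\aone}BG$ satisfies \emph{affine} Nisnevich excision, i.e., carries elementary Nisnevich squares of affine schemes to homotopy Cartesian squares. The $\pi_0$-level of excision follows from the hypothesis, since $\pi_0 \Sing^{\aone} BG(A) \cong H^1_{\Nis}(A,G)$ for $A$ smooth affine (by the assumed $\aone$-invariance of $H^1_{\Nis}$ plus the Mayer--Vietoris sequence for $G$-torsors on a distinguished square), and the higher homotopy groups of $BG$ reduce to sections of $G$, which satisfy excision for formal reasons. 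Second, one upgrades affine Nisnevich descent to full Nisnevich descent after $R_{\Zar}$ using that $R_{\Zar}$ enforces Zariski descent and that on $\Sm_k$ the Nisnevich topology is generated by the Zariski topology together with affine Nisnevich squares (cf. \cite[\S 1]{DAGXI} and the recollections in the preliminaries of the excerpt).

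Part (2) then follows from (1) without extra work. The two maps in question are comparison maps from a presheaf to its Zariski fibrant replacement, evaluated on an affine; by the affine Nisnevich excision established in the proof of (1) together with the vanishing of higher Zariski cohomology of quasi-coherent data on affines, the Zariski fibrant replacement does not alter sections on affine schemes. The argument is identical for $G$ and $BG$, since $G = \mathbf{R}\Omega^1_s BG$ at the level of sections on which both are already Nisnevich-local. For part (3), once (1) and (2) are in hand, $R_{\Zar}\Sing^{\aone}BG$ is a Nisnevich local, $\aone$-invariant model of $BG$, so $\mathbf{R}\Omega^1_s$ can be computed as the naive simplicial $\Omega^1_s$ applied to it. The singular construction $\Sing^{\aone}$ commutes with finite homotopy limits on affine sections (because $\hom(\Delta^{\bullet}_k, -)$ is computed componentwise and each $\Delta^n_k$ is affine), so $\Sing^{\aone}\Omega^1_s BG(A) \simeq \Omega^1_s \Sing^{\aone} BG(A)$; combining this with (2) for both $BG$ and $\Omega^1_s BG = G$ gives the desired weak equivalence.

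The main obstacle is the first step of part (1): verifying that $\Sing^{\aone}BG$ satisfies affine Nisnevich excision from the bare hypothesis on $H^1_{\Nis}(-,G)$. The delicate point is that excision for $BG$ is really a statement about $G$-torsors glued along the square, and what the hypothesis directly controls is only the set of isomorphism classes. Bridging this requires the bisimplicial/Čech-theoretic machinery of \cite{AHW, AHWII}, which I would cite rather than redevelop. Everything else in the proof is formal manipulation of the model-theoretic structures, once this technical lemma is granted.
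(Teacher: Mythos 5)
The paper's own proof is a single citation: parts (1) and (2) are exactly \cite[Theorems 2.2.5 and 2.3.2]{AHWII} and part (3) is \cite[Corollary 2.1.2]{AHWII}. Your proposal correctly identifies that the substance lives in the AHW/AHWII machinery and that the crucial technical input — affine Nisnevich excision for $\Sing^{\aone}BG$ from the $\aone$-invariance hypothesis on $H^1_{\Nis}(-,G)$ — must be imported rather than redeveloped. In that sense the route is the same: defer to the cited theorems.

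However, two of the bridging justifications you interpolate are not right as stated. For part (2), the reason $\Sing^{\aone}BG(A) \to R_{\Zar}\Sing^{\aone}BG(A)$ is an equivalence on affines is \emph{not} ``vanishing of higher Zariski cohomology of quasi-coherent data'' — $BG$ is not a quasi-coherent object and no such vanishing statement applies. The actual mechanism in \cite{AHW} is that the ``affine Nisnevich excision plus affine $\aone$-invariance'' package implies that $\Sing^{\aone}BG$ already satisfies Zariski (indeed Nisnevich) descent when restricted to smooth affines, so Zariski localization cannot change affine sections; this is the content of the affine replacement theorem there. For part (3), the claim that ``$\Sing^{\aone}$ commutes with finite homotopy limits on affine sections because $\hom(\Delta^\bullet_k,-)$ is computed componentwise'' is a genuine overreach: $\Sing^{\aone}$ is a (filtered/simplicial) colimit construction, and colimits do not commute with \emph{homotopy} limits in general, even sectionwise on affines. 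They commute with finite \emph{strict} limits, which is why the naive $\Omega^1_s$ passes through $\Sing^{\aone}$ — but $\mathbf{R}\Omega^1_s$ requires a fibrant replacement first, and inserting that replacement inside versus outside $\Sing^{\aone}$ is precisely the nontrivial comparison that \cite[Corollary 2.1.2]{AHWII} addresses; it is not a formal consequence. Your overall plan and the way you link (3) back to (2) via the fibrancy of $R_{\Zar}\Sing^{\aone}BG$ is sound, but the key interchange step should be attributed to the cited corollary rather than to a general (and false) commutation principle.
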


\begin{proof}
The first two statements are contained in \cite[Theorems 2.2.5 and 2.3.2]{AHWII}.  The third statement follows from \cite[Corollary 2.1.2]{AHWII}.
\end{proof}

We begin by establishing the ``adjunction" part of the clutching construction.

\begin{lem}
\label{lem:clutchingadjunction}
Suppose $k$ is a base ring, $G$ is a smooth $k$-group scheme and assume that the presheaf $H^1_{\Nis}(-,G)$ on $\Sm_k$ is $\aone$-invariant on affines.  If $(Y,y)$ is a (pointed) smooth affine scheme such that $\Sigma^1_s Y$ has the $\aone$-homotopy type of a smooth affine scheme, then for any (pointed) smooth affine model $(X,x)$ of $\Sigma^1_s Y$ there is a pointed bijection
\[
[(Y,y),(G,1)]_{\aone} \longrightarrow [(X,x),(BG,\ast)]_{\aone}.
\]
\end{lem}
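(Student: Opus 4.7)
The plan is to exploit the classical loop--suspension adjunction in the pointed $\aone$-homotopy category together with the canonical equivalence $G \simeq \Omega BG$, translating via the chosen pointed $\aone$-weak equivalence $X \simeq_{\aone} \Sigma^1_s Y$ and using the affine representability results of Lemma \ref{lem:aonelocalitystatements} to guarantee that everything computes correctly on the smooth affine scheme $Y$.

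First, I would transport along the pointed $\aone$-weak equivalence $X \simeq_{\aone} \Sigma^1_s Y$ to rewrite
\[
[(X,x),(BG,\ast)]_{\aone} \cong [\Sigma^1_s Y, BG]_{\aone,\bullet}.
\]
The derived suspension--loop adjunction in the pointed $\aone$-model structure on $\Spc_k$ (a proper, pointed, simplicial model structure, so the adjunction descends to the homotopy category with $\Sigma^1_s$ on cofibrant objects and $\mathbf{R}\Omega^1_s$ on fibrant objects) then yields
\[
[\Sigma^1_s Y, BG]_{\aone,\bullet} \cong [Y, \mathbf{R}\Omega^1_s BG]_{\aone,\bullet}.
\]

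Next, I would produce a natural $\aone$-weak equivalence of pointed spaces $G \to \mathbf{R}\Omega^1_s BG$. Sectionwise on any $U \in \Sm_k$, the unit $G(U) \to \Omega BG(U)$ is the classical weak equivalence attached to a simplicial group, which is an objectwise (hence Nisnevich-local) weak equivalence of simplicial presheaves. To extract the desired bijection $[Y,G]_{\aone,\bullet} \cong [Y,\mathbf{R}\Omega^1_s BG]_{\aone,\bullet}$ from this, I would appeal to Lemma \ref{lem:aonelocalitystatements}: parts (1) and (2) identify $[Y,G]_{\aone,\bullet}$ with $\pi_0(\Sing^{\aone} G(Y),1)$ and $[Y,\mathbf{R}\Omega^1_s BG]_{\aone,\bullet}$ with $\pi_0$ of $\Sing^{\aone} \mathbf{R}\Omega^1_s BG(Y)$, pointed at the constant loop at $\ast$; part (3) then identifies the latter simplicial set (up to weak equivalence) with $\mathbf{R}\Omega^1_s \Sing^{\aone} BG(Y)$, whose $\pi_0$ is $\pi_1(\Sing^{\aone} BG(Y),\ast)$. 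Under this identification the sectionwise map $G \to \Omega BG$ matches the two pointed sets. Concatenating the three bijections gives the lemma, and pointedness tracks through each step because every equivalence used is an equivalence of pointed objects.

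The main obstacle is the interaction between the $\aone$-localization and the derived simplicial loop functor: \emph{a priori} one does not know that $\mathbf{R}\Omega^1_s BG$ agrees on an affine $Y$ with an $\Omega$ applied sectionwise to a convenient model of $BG$, and without such an identification one cannot reduce $[Y,\mathbf{R}\Omega^1_s BG]_{\aone,\bullet}$ to $[Y,G]_{\aone,\bullet}$. This is exactly why the hypothesis that $H^1_{\Nis}(-,G)$ is $\aone$-invariant on affines is needed: it is the input to Lemma \ref{lem:aonelocalitystatements}, and in particular to part (3), which is the commutation statement that makes the whole argument go through.
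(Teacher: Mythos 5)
Your outline has the right shape and correctly identifies Lemma~\ref{lem:aonelocalitystatements} as the essential input, but there is a real gap in the way you apply the loop--suspension adjunction, and it is worth spelling out because it is precisely the point that the hypothesis on $H^1_{\Nis}(-,G)$ is designed to handle.

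In your second step you invoke the derived $\Sigma^1_s \dashv \Omega^1_s$ adjunction \emph{in the pointed $\aone$-model structure} to write $[\Sigma^1_s Y, BG]_{\aone,\bullet} \cong [Y, \mathbf{R}\Omega^1_s BG]_{\aone,\bullet}$. In that model structure the derived right adjoint is $\Omega^1_s$ of an \emph{$\aone$-fibrant} replacement, whereas the paper's notation $\mathbf{R}\Omega^1_s$ (and the object appearing in Lemma~\ref{lem:aonelocalitystatements}(3)) means $\Omega^1_s$ of a \emph{Nisnevich}-fibrant replacement. These are different a priori; the assertion that they agree up to $\aone$-weak equivalence (equivalently, that $G$ is still the loop space of $BG$ after $\aone$-localization) is the nontrivial content of the lemma, not something one may assume. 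Your step three then claims ``$G \to \mathbf{R}\Omega^1_s BG$ is an $\aone$-weak equivalence'' while only producing a Nisnevich-local weak equivalence, and step four applies parts (1) and (2) of Lemma~\ref{lem:aonelocalitystatements}, which speak about $G$ and $BG$, to the different object $\mathbf{R}\Omega^1_s BG$ without justification. The net effect is circular: you are using an $\aone$-local interpretation of $\Omega^1_s$ where you would need it, and a Nisnevich-local one where the cited results actually apply.

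The paper sidesteps this entirely by never invoking the adjunction in the $\aone$-model structure. Instead, it first uses parts (1) and (2) of Lemma~\ref{lem:aonelocalitystatements} to show that $\Sing^{\aone} G$ and $\Sing^{\aone} BG$ are $\aone$-fibrant, so that $\aone$-homotopy classes of maps into $G$ and $BG$ can be computed as \emph{Nisnevich-local simplicial} homotopy classes of maps into $\Sing^{\aone} G$ and $\Sing^{\aone} BG$. Then it establishes the chain of Nisnevich-local weak equivalences $\Sing^{\aone} G \to \Sing^{\aone} \mathbf{R}\Omega^1_s BG \to \mathbf{R}\Omega^1_s \Sing^{\aone} BG$ (the second map by part (3)), and finally applies the loop--suspension adjunction in $\hspnis$, where $\mathbf{R}\Omega^1_s$ genuinely is the derived right adjoint. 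All the model-categorical bookkeeping then lands in the right place automatically. If you want to keep your route, you should make precise which fibrant replacement your $\mathbf{R}\Omega^1_s$ refers to in each line and explicitly prove that the Nisnevich-local derived loops of $BG$ computes the $\aone$-local derived loops under the given hypotheses; as written, that step is asserted rather than proved.
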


\begin{proof}
By Lemma \ref{lem:aonelocalitystatements}(1), we know that $R_{\Zar}\Sing^{\aone}G$ is Nisnevich local and $\aone$-invariant, i.e., $\aone$-fibrant \cite[\S 2 Proposition 3.19]{MV}.  By Lemma \ref{lem:aonelocalitystatements}(2), we know that the map $\Sing^{\aone}G \to R_{\Zar}\Sing^{\aone}G$ is a Nisnevich local weak equivalence.  In particular, we know that $\Sing^{\aone}(G)$ is $\aone$-local and thus that $[(Y,y),(G,1)]_{\aone} \cong [(Y,y),(\Sing^{\aone}(G),1)]_s$.

The evident map $G \to \mathbf{R} \Omega^1_s BG$ is a Nisnevich local weak equivalence, e.g., by \cite[Lemma 2.2.2(iii)]{AHWII}.  It follows that the induced map $\Sing^{\aone} G \to \Sing^{\aone} {\mathbf R} \Omega^1_s BG$, which can be written as a homotopy colimit of Nisnevich local weak equivalences, is again a Nisnevich local weak equivalence.  On the other hand, Lemma \ref{lem:aonelocalitystatements}(3) allows us to conclude that the map $\Sing^{\aone} \mathbf{R} \Omega^1_s BG \to {\mathbf R}\Omega^1_s \Sing^{\aone} BG$ is a Nisnevich local weak equivalence.  Combining these two observations, we deduce that the induced map $\Sing^{\aone} G \to {\mathbf R}\Omega^1_s \Sing^{\aone} BG$ is a Nisnevich local weak equivalence.  Together with the conclusion of the previous paragraph, we deduce that
\[
[(Y,y),(\Sing^{\aone}(G),1)]_s \longrightarrow [(Y,y),{\mathbf R}\Omega^1_s \Sing^{\aone} BG]_s
\]
is a bijection.

By the using the loop-suspension adjunction, we conclude that $[(Y,y),{\mathbf R}\Omega^1_s \Sing^{\aone} BG]_s \cong [\Sigma^1_s(Y,y),\Sing^{\aone} BG]_s$. However, arguing as in the first paragraph, we also know that $\Sing^{\aone} BG$ is $\aone$-local, i.e., $[\Sigma^1_s(Y,y),\Sing^{\aone} BG]_s \cong [\Sigma^1_s(Y,y),\Sing^{\aone} BG]_{\aone} = [\Sigma^1_s(Y,y),BG]_{\aone}$.  Thus, combining all these identifications, we conclude that $[(Y,y),(G,1)]_{\aone} \isomt [\Sigma^1_s(Y,y),BG]_{\aone}$.  Now, since $\Sigma^1_s Y$ has the $\aone$-homotopy type of a smooth scheme $X$ (and explicitly writing base-points for $BG$), we conclude that
\[
[(Y,y),(G,1)]_{\aone} = [(X,x),(BG,\ast)]_{\aone},
\]
which is precisely what we wanted to show.
\end{proof}

Next, we establish the analog of the surjectivity and bijectivity statements in the topological clutching construction.

\begin{lem}
\label{lem:connectivity}
Suppose $k$ is a base ring, $G$ is a smooth $k$-group scheme and assume that the presheaf $H^1_{\Nis}(-,G)$ on $\Sm_k$ is $\aone$-invariant on affines.  If $(X,x)$ is a pointed smooth affine scheme, then the evident map $[(X,x),(BG,\ast)]_{\aone} \to [X,BG]_{\aone}$ is surjective; if $G$ is furthermore $\aone$-connected, then this map is bijectictive.
\end{lem}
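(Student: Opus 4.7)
The strategy is to mimic the classical topological argument comparing pointed and free homotopy classes via the evaluation fibration. In topology, if $(Z,z_0)$ is a pointed Kan complex, evaluation at the basepoint of a pointed space $(X,x)$ gives a Kan fibration $\mathrm{Map}(X,Z) \to Z$ with fiber $\mathrm{Map}_\bullet(X,Z)$, producing an exact sequence
\[
\pi_1(Z,z_0) \to [X,Z]_\bullet \to [X,Z] \to \pi_0(Z);
\]
surjectivity then holds when $Z$ is connected, and bijectivity holds when $\pi_1(Z,z_0)$ is trivial. The plan is to transport this template into the $\aone$-setting using the fibrant models from Lemma \ref{lem:aonelocalitystatements}.

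Concretely, I would set $Z := R_{\Zar}\Sing^{\aone}(BG)$; by Lemma \ref{lem:aonelocalitystatements}(1) this is both Nisnevich local and $\aone$-invariant, and by Lemma \ref{lem:aonelocalitystatements}(2) its values on smooth affines compute the $\aone$-mapping spaces into $BG$. This yields identifications $[X,BG]_{\aone} = \pi_0 Z(X)$ and $[(X,x),(BG,\ast)]_{\aone} = \pi_0\,\operatorname{hofib}\bigl(\mathrm{ev}_x\colon Z(X) \to Z(\Spec k)\bigr)$, where $\mathrm{ev}_x$ denotes evaluation at the chosen $k$-point $x$. The long exact sequence for this homotopy fiber sequence then produces
\[
\bpi_1^{\aone}(BG)(\Spec k) \longrightarrow [(X,x),(BG,\ast)]_{\aone} \longrightarrow [X,BG]_{\aone} \longrightarrow \bpi_0^{\aone}(BG)(\Spec k).
\]

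The surjectivity claim then reduces to the $\aone$-connectedness of $BG$, which forces the rightmost term to vanish. For the bijectivity claim, the standard delooping equivalence $G \isomt \mathbf{R}\Omega^1_s BG$ (underwritten by Lemma \ref{lem:aonelocalitystatements}(3) and already used implicitly in the proof of Lemma \ref{lem:clutchingadjunction}) yields $\bpi_1^{\aone}(BG) \cong \bpi_0^{\aone}(G)$; under the additional hypothesis that $G$ is $\aone$-connected, this sheaf vanishes and kills the leftmost term, upgrading surjectivity to bijectivity.

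The main obstacle I anticipate is making the fiber sequence rigorous in the unstable $\aone$-category: one needs to realize $\mathrm{ev}_x$ as a Kan fibration (or at least work with a model in which its homotopy fiber can be computed), and then correctly identify this homotopy fiber with a space whose $\pi_0$ is $[(X,x),(BG,\ast)]_{\aone}$. I expect this to be handled by a standard simplicial argument extending the model-categorical setup of \cite{AHW,AHWII}, invoking Lemma \ref{lem:aonelocalitystatements}(2) to identify the derived sections of $Z$ on smooth affines with its honest sections, together with the good behavior of the internal mapping space in simplicial presheaves.
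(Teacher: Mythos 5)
Your proposal follows essentially the same route as the paper. You identify the surjectivity with $\aone$-connectedness of $BG$ (always true by construction, via \cite[\S 2 Corollary 3.22]{MV}), and bijectivity with $\aone$-$1$-connectedness of $BG$, using the identification $\bpi_1^{\aone}(BG) \cong \bpi_0^{\aone}(G)$ furnished by the delooping equivalence $\Sing^{\aone} G \isomt \mathbf{R}\Omega^1_s \Sing^{\aone} BG$ — which is precisely what the paper does. Where the paper cites \cite[Lemma 2.1]{AsokFaselSpheres} for the bijectivity step, you instead unwind the evaluation fiber sequence, which is morally the content of that lemma; and you correctly flag the technical point (rigor of the evaluation fibration in the simplicial-presheaf setting, and the gap between the presheaf $\pi_1(Z(-))$ and the Nisnevich sheaf $\bpi_1^{\aone}(BG)$ when evaluated on $\Spec k$) that the cited lemma is there to paper over. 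One caveat worth fixing: in your exact sequence the left-hand term should be $\pi_1\bigl(R_{\Zar}\Sing^{\aone}BG(\Spec k)\bigr)$, i.e., $[\Sigma^1_s(\Spec k)_+,BG]_{\aone}$, rather than the sections of the sheaf $\bpi_1^{\aone}(BG)$ over $\Spec k$; these need not agree for a general base ring $k$, and reconciling them is exactly where the appeal to \cite[Lemma 2.1]{AsokFaselSpheres} (or an equivalent argument) is needed. For the surjectivity statement no such subtlety arises: $BG$ has simplicially trivial $\pi_0$ presheaf, so $\pi_0\bigl(R_{\Zar}\Sing^{\aone}BG(\Spec k)\bigr) = \ast$ directly by Lemma \ref{lem:aonelocalitystatements}(2), and any map can be pointed by a path, matching the paper's argument verbatim.
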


\begin{proof}
As in the Proof of Lemma \ref{lem:clutchingadjunction} we know that $\Sing^{\aone}G$ and $\Sing^{\aone}BG$ are $\aone$-local and that $\Sing^{\aone} G \to {\mathbf R}\Omega^1_s \Sing^{\aone} BG$ is a Nisnevich local weak equivalence.  Therefore, we conclude there are identifications of homotopy sheaves $\bpi_0^{\aone}(G) = \bpi_0(\Sing^{\aone} G) \cong \bpi_0({\mathbf R}\Omega^1_s \Sing^{\aone} BG) \cong \bpi_1(\Sing^{\aone} BG) = \bpi_1^{\aone}(BG)$.

The space $BG$ is always $0$-connected by construction, and therefore by \cite[\S 2 Corollary 3.22]{MV} it is always $\aone$-connected.  In particular, $\Sing^{\aone} BG$ is always $0$-connected.  Since $X$ is a pointed smooth scheme,
any morphism $X \to R_{\Zar} \Sing^{\aone} BG$ can be pointed and this yields the surjectivity statement.  If $G$ is $\aone$-connected, then we similarly conclude that $BG$ is $\aone$-$1$-connected.  In that case, the map of the theorem statement is a bijection by \cite[Lemma 2.1]{AsokFaselSpheres}.
\end{proof}

\begin{rem}
The space $BGL_n$ is {\em not} $\aone$-$1$-connected in general, and thus the map from pointed to free homotopy classes is {\em not} a bijection in general; see \cite[Corollary 4.12]{AsokFaselSpheres} for an explicit example to see that injectivity can indeed fail in this case.  On the other hand, the set of pointed homotopy classes of maps $[(X,x),(BG,\ast)]_{\aone}$ also has a ``concrete" description in terms of pairs consisting of a $G$-torsor $\mathscr{P}$ on $X$ equipped with a trivialization of $x^*\mathscr{P}$; we will not need this description.
\end{rem}

Now, we give a concrete description of elements of $[(Y,y),(G,1)]_{\aone}$ in terms of pointed morphisms of smooth schemes $Y \to G$.  Suppose $Y = \Spec A$, where $A$ is a $k$-algebra, a choice of $k$-point $y$ of $Y$ corresponds to a homomorphism $y: A \to k$.  This homomorphism induces a surjective morphism $res_y$ of simplicial groups $res_y: G(A[\Delta^{\bullet}]) \to G(k[\Delta^{\bullet}])$, and we set
\[
\Sing^{\aone}G(Y,y) := \ker(res_y): G(A[\Delta^{\bullet}]) \longrightarrow G(k[\Delta^{\bullet}]).
\]
Equivalently, $\Sing^{\aone}G(Y,y)$ is the (pointed) subsimplicial set of $\Sing^{\aone}G(Y)$ consisting of morphisms $Y \times \Delta^n \to G$ whose restriction to $\{ y \} \times \Delta^n$ are constant with image $1 \in G$, which explains the choice of terminology.

\begin{lem}
\label{lem:representability}
Suppose $k$ is a base ring, $G$ is a smooth $k$-group scheme and assume that the presheaf $H^1_{\Nis}(-,G)$ on $\Sm_k$ is $\aone$-invariant on affines.  Assume $(Y,y)$ is a (pointed) smooth affine scheme.
\begin{enumerate}[noitemsep,topsep=1pt]
\item The map $\pi_0(\Sing^{\aone}G(Y,y)) \to [(Y,y),(G,1)]_{\aone}$ is a bijection, i.e., $[(Y,y),(G,1)]_{\aone}$ coincides with the set of naive homotopy classes of pointed maps from $Y$ to $G$.
\item There is a pointed bijection $[Y,BG]_{\aone} \cong H^1_{\Nis}(Y,G)$.
\end{enumerate}
\end{lem}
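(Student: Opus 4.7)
The plan is to derive both statements from the $\aone$-fibrancy and affine comparison properties packaged in Lemma \ref{lem:aonelocalitystatements}, combined with Yoneda-style manipulations on pointed simplicial mapping spaces; the only nonformal input is the representability theorem of \cite{AHWII} needed for part (2). In each case the strategy is to replace the target by an $\aone$-fibrant model on which $\aone$-homotopy classes are computed directly as $\pi_0$ of sections, and then transfer the computation back to $\Sing^{\aone}$ on smooth affine test objects.

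For part (1), I begin with the identification
\[
[(Y,y),(G,1)]_{\aone} \;=\; [(Y,y),(R_{\Zar}\Sing^{\aone}G,1)]_s,
\]
which follows from the $\aone$-fibrancy of $R_{\Zar}\Sing^{\aone}G$ proved in Lemma \ref{lem:aonelocalitystatements}(1). Since $Y$ is representable and $R_{\Zar}\Sing^{\aone}G$ is objectwise Kan, Yoneda identifies the right-hand side with $\pi_0$ of the (homotopy) fiber at $1$ of the evaluation map $R_{\Zar}\Sing^{\aone}G(Y) \to R_{\Zar}\Sing^{\aone}G(\Spec k)$ induced by $y$. Lemma \ref{lem:aonelocalitystatements}(2) lets me replace this map, up to weak equivalence, by $\Sing^{\aone}G(Y) \to \Sing^{\aone}G(\Spec k)$; by construction the fiber at $1$ of the latter is precisely $\Sing^{\aone}G(Y,y)$. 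Taking $\pi_0$ yields the asserted bijection, and unravelling the definition of $\Sing^{\aone}$ shows that elements are equivalence classes of pointed morphisms $Y \to G$ under chains of naive $\aone$-homotopies, as required.

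For part (2), an identical Yoneda reduction, using the $\aone$-fibrancy of $R_{\Zar}\Sing^{\aone}BG$ from Lemma \ref{lem:aonelocalitystatements}(1) and the affine weak equivalence from Lemma \ref{lem:aonelocalitystatements}(2), produces
\[
[Y,BG]_{\aone} \;\cong\; \pi_0\, R_{\Zar}\Sing^{\aone}BG(Y) \;\cong\; \pi_0\, \Sing^{\aone}BG(Y).
\]
Identifying this last $\pi_0$ with $H^1_{\Nis}(Y,G)$ is the main $\aone$-representability theorem for principal $G$-bundles established in \cite{AHWII} (extending \cite{MField, Schlichting, AHW}): under the hypothesis that $H^1_{\Nis}(-,G)$ is $\aone$-invariant on affines, the tautological map sending a morphism $Y \to BG$ to its underlying $G$-torsor descends to a bijection between $\aone$-homotopy classes and Nisnevich isomorphism classes of torsors, compatibly with the chosen base-points on both sides.

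The principal obstacle is therefore the representability identification used in the last step of part (2); this is a substantive result imported from \cite{AHWII}. Everything else is a formal consequence of the two statements of Lemma \ref{lem:aonelocalitystatements} and the Yoneda computation of mapping spaces out of a representable into a simplicially fibrant simplicial presheaf, so no additional input beyond careful bookkeeping of base-points is needed.
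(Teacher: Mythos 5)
Your proof follows the same strategy as the paper, but it glosses over the one genuinely nonformal step in part (1): justifying that the \emph{actual} (point-set) fiber $\Sing^{\aone}G(Y,y) = \ker(res_y)$ computes the \emph{homotopy} fiber of $\Sing^{\aone}G(Y) \to \Sing^{\aone}G(\Spec k)$. Your reduction via Lemma \ref{lem:aonelocalitystatements}(2) replaces the fibrant $R_{\Zar}\Sing^{\aone}G$ diagram with the $\Sing^{\aone}G$ diagram up to objectwise weak equivalence, and you then write ``by construction the fiber at $1$ of the latter is precisely $\Sing^{\aone}G(Y,y)$.'' True, but an objectwise weak equivalence of maps does not, in general, induce a weak equivalence of point-set fibers; it only does so if both maps are fibrations (so that fiber and homotopy fiber agree on each side). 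For the fibrant replacement $R_{\Zar}\Sing^{\aone}G(Y) \to R_{\Zar}\Sing^{\aone}G(\Spec k)$ this is automatic from fibrancy, but for the non-fibrant $\Sing^{\aone}G(Y) \to \Sing^{\aone}G(\Spec k)$ it is not.

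The paper closes this gap by using the \emph{group structure}: both $\Sing^{\aone}G$ and $R_{\Zar}\Sing^{\aone}G$ (the latter after arranging that $R_{\Zar}$ preserves finite limits, e.g.\ by the Godement model) are simplicial groups, $res_y$ and $R_{\Zar}res_y$ are surjective homomorphisms of simplicial groups, and a surjective homomorphism of simplicial groups is a Kan fibration with fiber the kernel (\cite[Lemma 3.2]{Curtis}). This makes the comparison square a morphism of fiber sequences, whence the induced map on kernels is a weak equivalence. Your appeal to ``Yoneda-style manipulations'' conceals exactly this point, which is where the hypothesis that $G$ is a group scheme (as opposed to merely a pointed smooth scheme) enters essentially. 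You should state and use the simplicial-group fibration fact explicitly. Part (2) as you wrote it is fine: it is the same reduction to the affine representability theorem of \cite{AHWII} that the paper cites.
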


\begin{proof}
For Point (1), we proceed as follows.  The map $\Sing^{\aone}G \to R_{\Zar}\Sing^{\aone}G$ is a morphism of pointed simplicial presheaves and upon evaluation at any smooth affine scheme $Y$ is a weak equivalence.  We can assume that $R_{\Zar}$ commutes with finite limits (it has a model given by the ``Godement resolution" \cite[\S 2 Theorem 1.66]{MV}).  Therefore, $R_{\Zar}\Sing^{\aone}G$ is again a simplicial group and $R_{\Zar}res_y: R_{\Zar}\Sing^{\aone}G(Y) \to R_{\Zar}\Sing^{\aone}G(k)$ is a homomorphism; this homomorphism is surjective because $res_y$ admits a set-theoretic section corresponding to the structure morphism.  We set $F_y := \ker(R_{\Zar}res_y)$.

By functoriality, we obtain the following commutative diagram:
\[
\xymatrix{
\Sing^{\aone}G(Y,y) \ar[r]\ar[d] & \Sing^{\aone}G(Y) \ar[d]\ar[r]^-{res_y}  & \Sing^{\aone}G(k)\ar[d] \\
F_y \ar[r]& R_{\Zar}\Sing^{\aone}G(Y) \ar[r]^-{R_{\Zar} res_y} & R_{\Zar}\Sing^{\aone}G(k).
}
\]
The two right vertical maps in this diagram are weak equivalences by Lemma \ref{lem:aonelocalitystatements}(2).  On the other hand, as $res_y$ and $R_{\Zar} res_y$ are both surjective group homomorphism, they are fibrations, with fibers isomorphic to the respective kernels, i.e., the diagram is a morphism of homotopy fiber sequences \cite[Lemma 3.2]{Curtis}.  Therefore, we conclude that the leftmost vertical map $\Sing^{\aone}G(Y,y) \to F_y$ is also a (pointed) weak equivalence.

On the other hand, $\pi_0(F_y)$ coincides with the quotient of the group of pointed morphisms $Y \to R_{\Zar}\Sing^{\aone}G$ by the homotopy relation.  Then, since $R_{Zar}\Sing^{\aone}G$ is Nisnevich local and $\aone$-fibrant by Lemma \ref{lem:aonelocalitystatements}(1), we conclude that $[(Y,y),(G,1)]_{\aone}$ coincides with $\pi_0(F_y)$.  By the results of the previous paragraphs, we conclude that $\pi_0(F_y) \cong \pi_0(\Sing^{\aone}G(Y,y))$, which is precisely what we wanted to show.

Point (2) follows immediately from \cite[Theorem 2.2.5(ii)]{AHWII}.
\end{proof}

The simplicial weak equivalence $G \cong {\mathbf R}\Omega^1_s BG$ yields, by adjunction, a canonical morphism $\Sigma^1_s G \to BG$.  If $(Y,y)$ is a pointed smooth scheme, a pointed morphism $f: Y \to G$ then determines a morphism $\widetilde{cl}_G(f): \Sigma^1_s Y \to \Sigma^1_s G \to BG$.  If $\Sigma^1_s Y$ has the $\aone$-homotopy of a smooth scheme $X$, then fixing an $\aone$-weak equivalence $\Sigma^1_s Y \cong X$, the element $\widetilde{cl}_G(f)$ determines an element of $[X,BG]_{\aone}$ by forgetting the base-point.  The resulting assignment factors through naive $\aone$-weak equivalence and determines a ``clutching" function
\[
cl_G: \pi_0(\Sing^{\aone}G(Y,y)) \longrightarrow [X,BG]_{\aone}.
\]
Regarding this clutching function, we have the following result, which puts everything above together.

\begin{thm}
\label{thm:abstractclutching}
Suppose $k$ is a base ring, $G$ is a smooth $k$-group scheme and assume that the presheaf $H^1_{\Nis}(-,G)$ on $\Sm_k$ is $\aone$-invariant on affines.  If $Y$ is a (pointed) smooth affine scheme such that $\Sigma^1_s Y$ has the $\aone$-homotopy type of a smooth affine scheme, then the assignment $f \mapsto cl_G(f)$ yields a surjective ``clutching" function of the form
\[
cl_G: \pi_0(\Sing^{\aone}G(Y,y)) \longrightarrow H^1_{\Nis}(\Sigma^1_s Y,G);
\]
this assignment is functorial in both $G$ and $Y$, and bijective if $G$ is $\aone$-connected.  Moreover, the $\aone$-invariance on affines hypothesis is satisified if:
\begin{itemize}[noitemsep,topsep=1pt]
\item[(i)] $G$ is $GL_n, SL_n$ or $Sp_{2n}$, and $k$ is smooth over a Dedekind domain with perfect residue fields, or
\item[(ii)] $k$ is an infinite field, and $G$ is an isotropic reductive $k$-group (in the sense of \textup{\cite[Definition 3.3.4]{AHWII}});
\end{itemize}
and the $\aone$-connectedness hypothesis is satisfied if, e.g., $G = SL_n$ or $Sp_{2n}$.
\end{thm}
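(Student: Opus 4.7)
The plan is to assemble the theorem by composing the three lemmas already established in this subsection, and then to invoke the representability/connectedness results from \cite{AHWII} and \cite{MField} to verify the two lists of sufficient conditions.

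First, I would observe that the stated clutching function $cl_G$ is literally the composite of the bijections and surjection supplied by Lemmas \ref{lem:representability}, \ref{lem:clutchingadjunction}, and \ref{lem:connectivity}. Explicitly: Lemma \ref{lem:representability}(1) identifies $\pi_0(\Sing^{\aone} G(Y,y))$ with $[(Y,y),(G,1)]_{\aone}$; Lemma \ref{lem:clutchingadjunction}, applied with the chosen smooth affine model $X$ of $\Sigma^1_s Y$, identifies this with $[(X,x),(BG,\ast)]_{\aone}$; Lemma \ref{lem:connectivity} supplies a surjection (bijection if $G$ is $\aone$-connected) to the free homotopy classes $[X,BG]_{\aone}$; and Lemma \ref{lem:representability}(2) identifies $[X,BG]_{\aone}$ with $H^1_{\Nis}(X,G)=H^1_{\Nis}(\Sigma^1_s Y,G)$, the last equality because Nisnevich $G$-cohomology depends only on the $\aone$-homotopy type.

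Next I would check that the geometric prescription of $cl_G$ given in the paragraph preceding the theorem matches this composite. Unwinding the adjunction in Lemma \ref{lem:clutchingadjunction}, the bijection $[(Y,y),(G,1)]_{\aone} \isomt [\Sigma^1_s(Y,y),(BG,\ast)]_{\aone}$ is induced precisely by the canonical map $\Sigma^1_s G \to BG$ adjoint to the Nisnevich-local equivalence $G \to {\mathbf R}\Omega^1_s BG$. Hence the class of a pointed morphism $f: Y \to G$ is sent to the class of $\Sigma^1_s(Y) \to \Sigma^1_s(G) \to BG$, which is exactly $\widetilde{cl}_G(f)$. Forgetting base-points produces $cl_G(f)$ in $H^1_{\Nis}(\Sigma^1_s Y,G)$, establishing the description. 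Functoriality in $G$ and $Y$ is immediate because every map in sight (the suspension, the adjoint $\Sigma^1_s G \to BG$, the forgetful map to free homotopy classes) is natural.

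Finally, I would justify the two lists of sufficient hypotheses by citation. For (i), the $\aone$-invariance on affines of $H^1_{\Nis}(-,G)$ for $G = GL_n,\ SL_n,\ Sp_{2n}$ over a smooth algebra over a Dedekind domain with perfect residue fields is established in \cite[\S 3]{AHWII}; for (ii), the isotropic reductive case over an infinite field is \cite[Theorem 3.3.6]{AHWII}. For the $\aone$-connectedness of $SL_n$ and $Sp_{2n}$, one appeals to \cite[Theorem 6.13]{MField} (equivalently the computation of $\bpi_0^{\aone}$ of these groups), which shows these groups have trivial $\aone$-connected component sheaf. The main obstacle here is essentially notational bookkeeping — tracing base-points through the adjunction and verifying that the concrete and abstract descriptions of $cl_G$ really coincide — rather than any substantial new input, since all the homotopical content is isolated in the preceding lemmas.
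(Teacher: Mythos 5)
Your proof assembles $cl_G$ exactly as the paper does — composing the identifications from Lemmas~\ref{lem:representability}, \ref{lem:clutchingadjunction}, and~\ref{lem:connectivity}, with the surjection (resp.\ bijection when $G$ is $\aone$-connected) coming from the forgetful map to free homotopy classes — and then verifying the hypotheses by citation, so the approach coincides with the one in the text. One small correction: for the $\aone$-connectedness of $SL_n$ and $Sp_{2n}$ the paper invokes elementary matrix factorizations together with \cite[Lemma 6.1.3]{MStable}, not \cite[Theorem 6.13]{MField}; the latter concerns $\aone$-homotopy sheaves of Eilenberg--Mac Lane spaces and does not by itself compute $\bpi_0^{\aone}$ of these groups.
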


\begin{proof}
For the first statement, simply combine Lemmas \ref{lem:clutchingadjunction}, \ref{lem:connectivity} and \ref{lem:representability}.  Points (i) and (ii) are a consequence of \cite[Theorem 5.2.1]{AHW} and \cite[Theorems 3.3.1, 3.3.2 and 3.3.6]{AHW}.  The $\aone$-connectedness of $SL_n$ or $Sp_{2n}$ if $k$ is infinite follows from existence of elementary matrix factorizations combined with \cite[Lemma 6.1.3]{MStable}.
\end{proof}

\begin{rem}
One criticism of Theorem \ref{thm:abstractclutching} is that, given an explicit pointed map $f: Y \to G$, we have only ``abstractly" produced the bundle $cl_{G}(f)$ as an $\aone$-homotopy class of maps.  While we will see this is sufficient to make interesting existence statements, to ``concretely" produce a bundle, say in terms of a suitable $G$-valued $1$-cocycle on a smooth affine model $X$ of $\Sigma^1_s Y$, requires specifying an $\aone$-weak equivalence $\Sigma^1_s Y \cong X$.
\end{rem}

\subsection{Mayer-Vietoris cofiber sequences and an explicit weak equivalence}
We now turn our attention to producing an {\em explicit} $\aone$-weak equivalence $Q_{2n} \isomt \Sigma^1_s Q_{2n-1}$; this is accomplished in Theorem \ref{thm:explicitequivalence} below.  We begin with some preliminaries on Mayer-Vietoris cofiber sequences.
Suppose $(\mathscr{X},x)$ and $(\mathscr{Y},y)$ are pointed spaces.  Write $C(\mathscr{X})$ for the usual cone on $\mathscr{X}$, i.e., $\mathscr{X} \sma \Delta^1_s$, where the latter is pointed by $1$.  If $f: \mathscr{X} \to \mathscr{Y}$, the map collapsing $\mathscr{Y}$ to a point yields a morphism of diagrams:
\[
\xymatrix{
C(\mathscr{X}) \ar[d]^-{=}& \ar[l]\ar[r]^-f\ar[d]^-{=} \mathscr{X} & \mathscr{Y} \ar[d] \\
C(\mathscr{X}) & \ar[l]\ar[r] \mathscr{X} & \ast.
}
\]
The induced map of homotopy pushouts along the rows yields a morphism $C(f) \longrightarrow \Sigma^1_s\mathscr{X}$, functorial in all the inputs.

Now, suppose we are given a smooth scheme $X$ and a Nisnevich distinguished square of the form
\[
\xymatrix{
U \times_X V \ar[r]^-{j'} \ar[d] & V \ar[d]^{\psi} \\
U \ar[r]_j & X.
}
\]
Let us assume that $(U \times_X V)(k)$ is non-empty and fix a base-point; we point $U$, $V$ and $X$ by the image of this base-point under the various maps.  In that case, the morphism $\bar{\psi}: V/(U \times_X V) \to X/U$ induced by $\psi$ is an isomorphism by \cite[\S 3 Lemma 1.6]{MV}.  On the other hand, since all horizontal morphism in the diagram are cofibrations, the quotients $V/(U \times_X V)$ and $X/U$ are models for the homotopy cofibers of $j$ and $j'$.  Thus, $\overline{\psi}^{-1}$ provides a weak equivalence $C(j) \to C(j')$.  On the other hand, there is a map $C(j') \to \Sigma^1_s(U \times_X V)$ by the discussion of the previous paragraph.  Therefore, one obtains a composite morphism of the form
\[
X \longrightarrow C(j') \isomto C(j) \longrightarrow \Sigma^1_s(U \times_X V);
\]
we use this connecting morphism below.

\begin{ex}
\label{ex:mayervietorismorphism}
If $X$ is a smooth scheme, and $X = U \cup V$ is a Zariski cover of $X$ by two open subschemes such that $(U \cap V)(k)$ is non-empty, then the construction just described yields a morphism
\[
X \longrightarrow \Sigma^1_s(U \cap V).
\]
As we will see below, the assumption $(U \cap V)(k) \neq \emptyset$ is a non-trivial restriction.
\end{ex}

Let us now specialize the constructions just made taking $X=Q_{2n}$.  Before proceeding, recall that the scheme $Q_{2n-1}$ has a standard base-point $(1,0,\ldots,0,1,0,\ldots,0)$.  Consider the two principal open subschemes $V^0_{2n}:=D_z$ and $V^1_{2n}:=D_{1+z}$.  The intersection $V^0_{2n}\cap V^1_{2n}$ is the principal open subscheme $D_{z(1+z)}\subset Q_{2n}$ on which the $2n$ sections
\[
(x_1/z,\ldots,x_n/z,y_1/(1+z),\ldots,y_n/(1+z))
\]
satsify the equation defining the quadric $Q_{2n-1}$.  We thus obtain a morphism $\psi_n: V^0_{2n}\cap V^1_{2n}\to Q_{2n-1}$.

\begin{rem}
The set $(V^0_{2n}\cap V^1_{2n})(k)$ {\em can} be empty if, e.g., $k = {\mathbb F}_2$.
\end{rem}

If $(V^0_{2n}\cap V^1_{2n})(k)$ is non-empty, then we can pick a base-point here.  Up to post-composing with an automorphism of $Q_{2n-1}$, we can assume that this base-point is mapped to the standard base-point of $Q_{2n-1}$.  Therefore, we obtain a morphism $\Sigma^1_s(V^0_{2n}\cap V^1_{2n}) \longrightarrow \Sigma^1_s Q_{2n-1}$.  Then, the connecting homomorphism in the Mayer-Vietoris cofiber sequence as in Example \ref{ex:mayervietorismorphism} composed with the morphism of the previous sentence yields a morphism
\[
\varphi_n: Q_{2n} \longrightarrow \Sigma^1_s(V^0_{2n}\cap V^1_{2n}) \longrightarrow \Sigma^1_s Q_{2n-1}.
\]
Regarding this composite morphism, we have the following result:

\begin{thm}
\label{thm:explicitequivalence}
If the base $k$ is an (infinite) perfect field, then, for any integer $n \geq 2$, the morphism $\varphi_n:Q_{2n}\to \Sigma^1_s Q_{2n-1}$ is an $\aone$-weak-equivalence.
\end{thm}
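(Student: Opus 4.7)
The strategy is to exploit the Mayer--Vietoris cover $Q_{2n} = V^0_{2n} \cup V^1_{2n}$ together with an explicit change of variables to reduce the problem to the $\aone$-homotopy type computations of Section~\ref{s:spheres}, and then to identify $\varphi_n$ on the first non-vanishing $\aone$-homotopy sheaf.

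My first step is to describe the cover explicitly. On $V^0_{2n} = D_z$, the substitution $u_i := x_i/z$ turns the defining equation of $Q_{2n}$ into $\sum_i u_i y_i = 1 + z$, and since $z = \bigl(\sum_i u_i y_i\bigr) - 1$ is invertible on $V^0_{2n}$ one may eliminate $z$ to obtain an isomorphism
\[
V^0_{2n} \cong \A^{2n} \setminus \{ \textstyle\sum_i u_i y_i = 1 \}.
\]
A parallel calculation (using $v_i := y_i/(1+z)$) gives $V^1_{2n} \cong \A^{2n} \setminus \{\sum_i x_i v_i = -1\}$. On $V^0_{2n} \cap V^1_{2n}$ we further require $1 + z \neq 0$, i.e.\ $t := \sum_i u_i y_i \neq 0, 1$, and the rescaling $(u, y) \mapsto ((u, y/t), t)$ yields
\[
V^0_{2n} \cap V^1_{2n} \cong Q_{2n-1} \times (\A^1 \setminus \{0, 1\}),
\]
under which $\psi_n$ becomes the first projection.

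Next I would feed this into the Mayer--Vietoris cofiber sequence
\[
V^0_{2n} \cap V^1_{2n} \longrightarrow V^0_{2n} \vee V^1_{2n} \longrightarrow Q_{2n} \stackrel{\delta}{\longrightarrow} \Sigma^1_s(V^0_{2n} \cap V^1_{2n}),
\]
so that $\varphi_n = (\Sigma^1_s \mathrm{pr}_1) \circ \delta$. By Theorem~\ref{thm:main} and the standard identification $Q_{2n-1} \simeq S^{n-1}_s \sma \gm{\sma n}$, both the source and target of $\varphi_n$ are $\aone$-equivalent to $({\pone})^{\sma n}$. Over an infinite perfect field, Morel's $\aone$-connectivity theorem ensures that $({\pone})^{\sma n}$ is $\aone$-$(n-1)$-connected (in particular $\aone$-simply connected for $n \geq 2$), and that its first non-vanishing $\aone$-homotopy sheaf is controlled by the Milnor--Witt K-theory sheaf $\K^{\MW}_n$. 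Consequently, to show that $\varphi_n$ is an $\aone$-weak equivalence it should suffice to check that its induced map on $\bpi_n^{\aone}$ is multiplication by a unit of $\K^{\MW}_0(k)$.

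The final step, which I expect to be the main obstacle, is to carry out this check. The product decomposition of $V^0_{2n} \cap V^1_{2n}$ lets us split the contribution of the $(\A^1 \setminus \{0,1\})$-factor in $\delta$, and it should collapse under $\Sigma^1_s \mathrm{pr}_1$, leaving only the ``$Q_{2n-1}$-factor" of $\delta$ responsible for $\bpi_n^{\aone}$. A more robust way to finish is to compare $\varphi_n$ with the explicit purity-based $\aone$-equivalence $Q_{2n} \simeq ({\pone})^{\sma n}$ of Remark~\ref{rem:alternativeproof}: one then has two self-maps of $({\pone})^{\sma n}$ whose ``difference" lies in the endomorphism set $[({\pone})^{\sma n}, ({\pone})^{\sma n}]_{\aone}$, and an explicit cocycle-level computation (tracking the trivialization of the normal bundle used by purity through the rescaling $(u, y) \mapsto (u, y/t)$) should identify this difference as a unit in $\K^{\MW}_0(k)$. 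The infinite-perfect-field hypothesis enters precisely through the invocation of Morel's computation of $\bpi_n^{\aone}$.
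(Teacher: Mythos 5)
Your reduction strategy closely parallels the paper's: both arguments use the Mayer--Vietoris cofiber sequence for the cover $Q_{2n} = V^0_{2n} \cup V^1_{2n}$, invoke Morel's connectivity and homotopy-sheaf computations over an infinite perfect field to identify $[Q_{2n},\Sigma^1_s Q_{2n-1}]_{\aone} \cong \K^{MW}_0(k)$, and then reduce to showing that the class of $\varphi_n$ is a unit. Your explicit coordinate change exhibiting $V^0_{2n} \cap V^1_{2n} \cong Q_{2n-1} \times (\A^1\setminus\{0,1\})$, with $\psi_n$ the first projection, is correct and is a nice observation that the paper does not make explicit (the paper only introduces $\psi_n$ via the $2n$ sections $x_i/z,\,y_i/(1+z)$).

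However, there is a genuine gap at exactly the place you flag as the ``main obstacle." Everything up to that point establishes only that $\varphi_n$ corresponds to \emph{some} element of $\K^{MW}_0(k)$; the entire content of the theorem is that this element is a unit, and your proposal does not actually compute it. Saying an explicit cocycle-level computation ``should identify this difference as a unit'' is a statement of hope, not a proof, and the product decomposition of the intersection by itself does not hand you the answer: one still has to understand where the connecting map $\delta: Q_{2n} \to \Sigma^1_s(V^0_{2n}\cap V^1_{2n})$ actually lands (the second factor $\A^1\setminus\{0,1\}$ is not $\aone$-contractible, so its contribution only ``collapses'' after composing with $\Sigma^1_s\psi_n$ — which is the thing to be computed). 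The paper resolves this by translating the problem, via the $\aone$-Postnikov tower and Eilenberg--Mac Lane spaces $K(\K^{MW}_n,n)$, into a computation in $H^{n-1}_{\Nis}(Q_{2n-1},\K^{MW}_n) \to H^n_{\Nis}(Q_{2n},\K^{MW}_n)$, identifies explicit cycle-level generators of each group in the Gersten--Rost--Schmid complex (Lemmas~\ref{lem:generator1} and \ref{lem:generator2}), and then traces the generator through pull-back along $\psi_n$ and the Mayer--Vietoris boundary to get $\langle(-1)^n\rangle$ times the generator (Lemma~\ref{lem:composite}). Your alternative suggestion — comparing $\varphi_n$ with the purity equivalence of Remark~\ref{rem:alternativeproof} and tracking trivializations of normal bundles through the rescaling — could plausibly work, but it would require exactly the same kind of careful bookkeeping with twists by $\Lambda_x^*$ that the paper's lemmas carry out; as written, the step is missing.
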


\begin{rem}
The careful reader will note that the results of Morel to which we appeal only require $k$ to be perfect.  However, the precise results to which we appeal, specifically \cite[Theorem 5.46]{MField} and \cite[Theorem 6.1]{MField} implicitly make use of \cite[Lemma 1.15]{MField}, which is a form of a presentation lemma of Gabber.   Morel does not provide a proof for this statement; the published version of Gabber's presentation lemma \cite{CTHK} requires that base field is infinite and so we have made this assumption as well.
\end{rem}

\begin{proof}
In the proof, we appeal to the results of $\cite{MField}$ and this is the reason we require the base to be an (infinite) perfect field.  The assumption that $k$ is infinite also guarantees that $(V^0_{2n}\cap V^1_{2n})(k)$ is non-empty.  By Theorem \ref{thm:main}, we have an explicit weak-equivalence $Q_{2n}\to (\pone)^{\sma n}$.  In particular, by \cite[Corollary 6.38]{MField}, $Q_{2n}$ is $\aone$-$(n-1)$-connected.  On the other hand, we know that $Q_{2n-1}\simeq S^{n-1}\wedge \gm{\sma n}$ by \cite[\S 3 Example 2.20]{MV}.  Since $n \geq 2$, $\Sigma^1_s Q_{2n-1}$ is at least $\aone$-$1$-connected, and it follows from \cite[Lemma 2.1]{AsokFaselSpheres} that for any pointed space $(\mathscr{X},x)$ the map $[(\mathscr{X},x),\Sigma^1_s Q_{2n-1}]_{\aone} \to [\mathscr{X},\Sigma^1_s Q_{2n-1}]$ is a bijection (this bijection allows us to be sloppy with pointed and free homotopy classes below).  Combining this observation with the conclusion of \cite[Corollary 6.43]{MField}, which computes $[(Q_{2n},0),\Sigma^1_s Q_{2n-1}]_{\aone}$, we conclude that $[Q_{2n},\Sigma^1_s Q_{2n-1}] \cong \K^{MW}_0(k)$.  Then, it suffices then to show that the class of the morphism $\varphi_n$ corresponds to an invertible element of $\K^{MW}_0(k)$ under the previous identifications.

Observe that any morphism $f:Q_{2n}\to \Sigma^1_s Q_{2n-1}$ yields a function
\[
[\Sigma^1_s Q_{2n-1},Q_{2n}]_{\aone} \longrightarrow [Q_{2n},Q_{2n}]_{\aone}
\]
by precomposing with $f$. Identifying both terms with $\K^{MW}_0(k)$, this map is precisely the multiplication by the class of $f$ in $[Q_{2n},\Sigma^1_s Q_{2n-1}]_{\aone}=\K^{MW}_0(k)$.

Next, we claim that $[Q_{2n},Q_{2n}]_{\aone} \isomt H^n_{\Nis}(Q_{2n},\K_n^{MW})$.  Indeed, we know $Q_{2n}$ is $\aone$-$(n-1)$-connected and $\bpi_n^{\aone}(Q_{2n}) \cong \K^{MW}_n$ by \cite[Theorem 6.40]{MField}.  Therefore, we conclude that there is an $\aone$-weak equivalence $Q_{2n}^{(n)} \cong K(\K^{MW}_n,n)$ between the $n$-th stage of the $\aone$-Postnikov tower (see \cite[Theorem 6.1]{AsokFaselThreefolds}) of $Q_{2n}$, and the Eilenberg-Mac Lane space $K(\K^{MW}_n,n)$.  The functorial morphism $Q_{2n} \to Q_{2n}^{(n)}$ thus yields a homomorphism
\[
[Q_{2n},Q_{2n}]_{\aone} \longrightarrow H^n_{\Nis}(Q_{2n},\K_n^{MW}).
\]
By \cite[Lemma 4.5]{AsokFaselSpheres} and the suspension isomorphism, for any strictly $\aone$-invariant sheaf $\mathbf{A}$ there are isomorphisms of the form
\[
H^i_{\Nis}(Q_{2n},\mathbf{A}) \cong H^i_{\Nis}(\Sigma^1_s Q_{2n-1},\mathbf{A}) \cong H^{i-1}_{\Nis}(Q_{2n-1},\mathbf{A}) = 0
\]
if $i > n$.  Thus, arguing as in \cite[Proposition 6.2]{AsokFaselThreefolds}, this vanishing implies that the map $[Q_{2n},Q_{2n}]_{\aone} \to [Q_{2n},Q_{2n}^{(n)}]_{\aone} \cong [Q_{2n},K(\K^{MW}_n,n)]_{\aone}$ is a bijection.  Reading through the above, one has also deduced that $[\Sigma^1_s Q_{2n-1},Q_{2n}]_{\aone} \isomt H^{n-1}_{\Nis}(Q_{2n-1},\K_n^{MW})$.

To establish the theorem, it suffices to prove that $\varphi_n:Q_{2n}\to \Sigma^1_s Q_{2n-1}$ yields an isomorphism
\[
H^{n-1}_{\Nis}(Q_{2n-1},\K_n^{MW}) \longrightarrow H^n_{\Nis}(Q_{2n},\K_n^{MW}).
\]
Now, by construction, the morphism induced by $\varphi_n$ is the composite
\[
H^{n-1}_{\Nis}(Q_{2n-1},\K_n^{MW}) \longrightarrow H^{n-1}_{\Nis}(V_{2n}^0\cap V_{2n}^1,\K_n^{MW}) \longrightarrow H^n_{\Nis}(Q_{2n},\K_n^{MW})
\]
where the first morphism is the pull-back along the map $\varphi_n: V^0_{2n}\cap V^1_{2n}\to Q_{2n-1}$ and the second morphism is the connecting homomorphism in the Mayer-Vietoris exact sequence. The result then rests on the explicit cohomological computations performed in Lemmas \ref{lem:generator1}, \ref{lem:generator2} and \ref{lem:composite}.
\end{proof}

We recall a few consequences of \cite[Corollary 5.43]{MField} concerning cohomology with coefficients in the sheaves $\K^{MW}_n$, which will be helpful in performing the necessary computations.  For any smooth scheme $X$, the canonical map $H^i_{\Zar}(X,\K^{MW}_n) \isomt H^i_{\Nis}(X,\K^{MW}_n)$ is an isomorphism, and the groups $H^i_{\Zar}(X,\K_n^{MW})$ can be computed using an explicit flasque resolution; the degree $m$ term of this complex takes the form
\[
\bigoplus_{x\in X^{(m)}} (\mathbf{K}^{MW}_{n-m}(k(x))\otimes_{\Z[k(x)^\times]}\Z[\Lambda_x^*])
\]
where $\Lambda_x=\wedge^m \mathfrak m_x/\mathfrak m_x^2$, $\mathfrak m_x$ is the maximal ideal in the local ring $\O_{X,x}$ and $\Lambda_x^*$ is the set of nonzero elements in the (one dimensional) $k(x)$-vector space $\Lambda_x$ (usually, one considers the dual of $\mathfrak m_x/\mathfrak m_x^2$ but this makes no difference in our analysis \cite[\S 5.1]{MField} and we drop it to lighten the notation).  In particular, in the lemmas below, cohomology could be taken either with respect to the Zariski or Nisnevich topologies.

\begin{lem}
\label{lem:generator1}
For any $n\geq 1$, the group $H^{n-1}_{\Zar}(Q_{2n-1},\mathbf{K}_n^{MW})$ is a free $\mathbf{K}_0^{MW}(k)$-module of rank one generated by the class of the cycle $[x_n]\otimes \overline {x_1}\wedge\ldots\wedge \overline x_{n-1}$ in $\mathbf{K}_1^{MW}(k(s))\otimes_{\Z[k(s)^\times]}\Z[\Lambda_s^*])$ where $s$ is the complete intersection given by the equations $x_1=\ldots=x_{n-1}=0$ (and $s$ is the generic point if $n=1$).
\end{lem}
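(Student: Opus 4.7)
The strategy is to reduce to a computation on $\aone^n\setminus 0$ via the standard projection $\pi:Q_{2n-1}\to \aone^n\setminus 0$ onto $(x_1,\ldots,x_n)$, which is Zariski locally trivial with $\aone$-contractible (affine) fibres and hence an $\aone$-weak equivalence. Since $\mathbf{K}^{MW}_n$ is strictly $\aone$-invariant by \cite{MField}, $\pi^{\ast}$ induces an isomorphism $H^{n-1}_{\Nis}(Q_{2n-1},\mathbf{K}^{MW}_n)\isomto H^{n-1}_{\Nis}(\aone^n\setminus 0,\mathbf{K}^{MW}_n)$. The subscheme $s\subset Q_{2n-1}$ is carried by $\pi$ onto the open $\gm{}\subset\aone^n\setminus 0$ cut out by $x_1=\cdots=x_{n-1}=0$, and $\pi|_s$ is itself an $\aone$-weak equivalence; the regular sequence $(x_1,\ldots,x_{n-1})$ on $s$ pulls back from the corresponding regular sequence on $\aone^n\setminus 0$, so identifying the generator downstairs will suffice.

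Next I compute the right hand group using the Gysin sequence for $i:\{0\}\hookrightarrow\aone^n$ with complement $j:\aone^n\setminus 0\hookrightarrow\aone^n$. Strict $\aone$-invariance yields $H^i_{\Nis}(\aone^n,\mathbf{K}^{MW}_n)\cong H^i_{\Nis}(\Spec k,\mathbf{K}^{MW}_n)$, which vanishes for $i>0$; in particular for $n\geq 2$ both the source and the target on the two sides of the boundary map vanish. The localization sequence collapses to
\[
H^{n-1}_{\Nis}(\aone^n\setminus 0,\mathbf{K}^{MW}_n)\;\isomto\; H^n_{\{0\}}(\aone^n,\mathbf{K}^{MW}_n)\;\cong\;\mathbf{K}^{MW}_0(k),
\]
the last isomorphism given by purity with the canonical trivialisation of the normal bundle to $\{0\}$ furnished by $\overline{x_1}\wedge\cdots\wedge\overline{x_n}$. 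This already proves the rank and freeness assertion (for $n=1$ the same argument gives the conclusion on the quotient by the constant summand $H^0(\Spec k,\mathbf{K}^{MW}_1)$).

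To identify the generator explicitly, I work in the Rost--Schmid complex of $\aone^n\setminus 0$ (equivalently of $Q_{2n-1}$) recalled above. The purity generator of $H^n_{\{0\}}(\aone^n,\mathbf{K}^{MW}_n)$ is $1\otimes\overline{x_1}\wedge\cdots\wedge\overline{x_n}$ supported at the origin. I lift it through the codimension filtration one step at a time, applying Morel's Milnor--Witt residue map at each codimension-one specialisation. Normalising the residue so that $\partial_{x_n}([x_n])=1$, a single residue computation along the $x_n$-axis exhibits the class $[x_n]\otimes\overline{x_1}\wedge\cdots\wedge\overline{x_{n-1}}$ on the generic point of $\gm{}=V(x_1,\ldots,x_{n-1})\subset\aone^n\setminus 0$ as a Rost--Schmid-cycle whose residue at $0$ is the purity generator. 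Hence this cycle represents a generator of $H^{n-1}$, and pulling back along $\pi$ gives the stated cycle on $s\subset Q_{2n-1}$.

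The main obstacle is the last step: confirming with the correct signs that the explicit Rost--Schmid boundary of the purity generator is (up to a unit in $\mathbf{K}^{MW}_0(k)$) the asserted cycle. The residue depends on a choice of uniformiser and the Rost--Schmid formalism twists each residue by orientation data in $\Z[\Lambda^{\ast}_{x}]$, so keeping track of these conventions compatibly with the identifications used downstream in the proof of Theorem \ref{thm:explicitequivalence} requires care. The verification is, however, entirely local at the origin and reduces to a direct computation in the Koszul-type Rost--Schmid complex associated with the regular sequence $(x_1,\ldots,x_n)$ on $\aone^n$.
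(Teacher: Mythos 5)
Your argument is essentially correct and gives a self-contained proof where the paper only cites \cite[\S 3.3]{Fasel08b}. The reduction along the affine-bundle projection $Q_{2n-1}\to\aone^n\setminus 0$ is valid: strict $\aone$-invariance of $\mathbf{K}^{MW}_n$ makes its Nisnevich (equivalently Zariski) cohomology invariant under $\aone$-weak equivalences of smooth schemes, via representability by Eilenberg--Mac Lane spaces. The localization sequence for $\{0\}\hookrightarrow\aone^n$ together with purity then computes $H^{n-1}(\aone^n\setminus 0,\mathbf{K}^{MW}_n)\cong\mathbf{K}^{MW}_0(k)$ for $n\geq 2$, and lifting the purity generator through the Rost--Schmid residue along the $x_n$-axis does produce the asserted cycle. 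The sign/unit ambiguity you flag at the end does not actually affect the lemma: any unit multiple of a generator of a free rank-one module is again a generator, so the deferred bookkeeping is only needed to pin down the exact constant, which the paper tracks only up to the unit $\langle(-1)^n\rangle$ in Lemma \ref{lem:composite} anyway. Your observation about $n=1$ is a genuine point: $H^0_{\Zar}(\gm{},\mathbf{K}^{MW}_1)$ contains the constants $\mathbf{K}^{MW}_1(k)$ and is not free of rank one over $\mathbf{K}^{MW}_0(k)$; the lemma as stated should be read modulo that constant summand, or restricted to $n\geq 2$ (which is all that is used in Theorem \ref{thm:explicitequivalence}).
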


\begin{proof}
See \cite[\S 3.3]{Fasel08b}.
\end{proof}

\begin{lem}
\label{lem:generator2}
For any $n\geq 1$, the group $H^n_{\Zar}(Q_{2n},\mathbf{K}_n^{MW})$ is a free $\mathbf{K}_0^{MW}(k)$-module of rank one generated by the class of the cycle $\langle 1\rangle\otimes \overline x_1\wedge\ldots\wedge \overline x_n$ in $\mathbf{K}_0^{MW}(k(t))\otimes_{\Z[k(t)^\times]}\Z[\Lambda_t^*])$ where $t$ is the closed subscheme defined by the vanishing locus of the sections $x_1,\ldots,x_n,1+z$.
\end{lem}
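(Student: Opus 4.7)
The plan is to compute $H^n_{\Zar}(Q_{2n},\mathbf{K}_n^{MW})$ by exploiting the $\aone$-contractibility of the open complement $X_{2n}=Q_{2n}\setminus E_n$ established in Theorem \ref{thm:newaonecontractibles}, combined with purity for Milnor-Witt cohomology along the codimension $n$ closed immersion $E_n\hookrightarrow Q_{2n}$. Since $\mathbf{K}_n^{MW}$ is strictly $\aone$-invariant, $\aone$-contractibility gives $H^i(X_{2n},\mathbf{K}_n^{MW})\cong H^i(\Spec k,\mathbf{K}_n^{MW})$, which vanishes for $i>0$. Feeding this into the localization long exact sequence
\[
H^{n-1}(X_{2n},\mathbf{K}_n^{MW}) \to H^n_{E_n}(Q_{2n},\mathbf{K}_n^{MW}) \to H^n(Q_{2n},\mathbf{K}_n^{MW}) \to H^n(X_{2n},\mathbf{K}_n^{MW})
\]
collapses the computation for $n\geq 2$ to the local cohomology $H^n_{E_n}(Q_{2n},\mathbf{K}_n^{MW})$. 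The case $n=1$ is handled separately by noting that the restriction $H^0(Q_2,\mathbf{K}_1^{MW})\to H^0(X_2,\mathbf{K}_1^{MW})$ is an isomorphism of copies of $\mathbf{K}_1^{MW}(k)$ (both reducing to sections over $\Spec k$), forcing the boundary map to vanish.

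Next, I would identify $H^n_{E_n}(Q_{2n},\mathbf{K}_n^{MW})$ by purity. In a Zariski open neighborhood of $E_n$ in $Q_{2n}$---obtained by removing the other irreducible component $\{z=0\}$ of the vanishing locus of $x_1,\ldots,x_n$---the sections $x_1,\ldots,x_n$ form a regular sequence cutting out $E_n$, so the conormal sheaf $\mathfrak m_{E_n}/\mathfrak m_{E_n}^2$ has basis $\overline x_1,\ldots,\overline x_n$, and the wedge $\overline x_1\wedge\cdots\wedge \overline x_n$ trivializes $\det\nu^*_{E_n/Q_{2n}}$. The Milnor-Witt purity isomorphism then yields
\[
H^n_{E_n}(Q_{2n},\mathbf{K}_n^{MW}) \isomto H^0(E_n,\mathbf{K}_0^{MW}),
\]
with the twist by the determinant of the conormal bundle absorbed into the chosen trivialization. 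Since $E_n\cong\mathbb{A}^n$ is $\aone$-contractible and $\mathbf{K}_0^{MW}$ is strictly $\aone$-invariant, the right-hand side equals $\mathbf{K}_0^{MW}(k)$, which establishes the rank-one freeness.

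Finally, to identify the explicit generator, I would unwind the purity isomorphism at the level of the Cousin/Gersten resolution of $\mathbf{K}_n^{MW}$ on $Q_{2n}$. The subcomplex of that resolution supported on $E_n$ coincides, via the trivialization $\overline x_1\wedge\cdots\wedge \overline x_n$, with the shift by $n$ of the Cousin complex for $\mathbf{K}_0^{MW}$ on $E_n$. Under this identification the element $1\in \mathbf{K}_0^{MW}(k)=H^0(E_n,\mathbf{K}_0^{MW})$ corresponds, in the codimension-$n$ term of the Gersten complex of $Q_{2n}$, to the cycle $\langle 1\rangle\otimes \overline x_1\wedge\cdots\wedge \overline x_n$ sitting at the generic point $t$ of $E_n$, which is exactly the generator asserted. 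I expect the main obstacle to be the bookkeeping of the normal-orientation twist throughout the purity isomorphism, but the explicit regular sequence $x_1,\ldots,x_n$ cutting out $E_n$ makes this tractable.
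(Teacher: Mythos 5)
Your proof is correct and follows essentially the same route as the paper's (extremely terse) argument: the paper just says to combine the localization long exact sequence for $E_n \hookrightarrow Q_{2n}$ with $\aone$-invariance of cohomology with $\K^{MW}$-coefficients, and you have fleshed this out --- correctly --- by spelling out the vanishing of $H^i(X_{2n},\K_n^{MW})$ for $i>0$, the purity/d\'evissage isomorphism $H^n_{E_n}(Q_{2n},\K_n^{MW}) \cong H^0(E_n,\K_0^{MW})$ via the explicit trivialization $\overline{x}_1\wedge\cdots\wedge\overline{x}_n$ of the determinant of the conormal bundle on a neighborhood of $E_n$, and the identification of the image of $\langle 1\rangle$ in the Gersten complex. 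Two small remarks: for the $n=1$ case the statement $H^0(Q_2,\K_1^{MW})\cong\K_1^{MW}(k)$ is true but not immediate (since $Q_2$ is not $\aone$-contractible), whereas the surjectivity of $H^0(Q_2,\K_1^{MW})\to H^0(X_2,\K_1^{MW})$ --- which is all you actually need to kill the boundary map --- follows directly because the restriction factors the identity on $\K_1^{MW}(k)$ through the structure morphisms; and the notation $\mathfrak m_{E_n}/\mathfrak m_{E_n}^2$ should be $\mathcal{I}_{E_n}/\mathcal{I}_{E_n}^2$ (or the paper's $\Lambda_t$ at the generic point $t$ of $E_n$), but your intent is clear.
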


\begin{proof}
By Theorem \ref{thm:newaonecontractibles}, we know that the open subscheme $X_{2n}\subset Q_{2n}$ is $\aone$-contractible and has closed complement $E_n\simeq \mathbb{A}^n$. It suffices then to use the long exact sequence associated with this embedding, together with homotopy invariance of cohomology with coefficients in Milnor-Witt K-theory sheaves to obtain the result.
\end{proof}

\begin{lem}
\label{lem:composite}
The composite
\[
H^{n-1}_{\Zar}(Q_{2n-1},\K_n^{MW}) \longrightarrow H^{n-1}_{\Zar}(V_{2n}^0\cap V_{2n}^1,\K_n^{MW})\longrightarrow H^n_{\Zar}(Q_{2n},\K_n^{MW})
\]
induced by $\varphi_n$ maps the generator of \textup{Lemma \ref{lem:generator1}} to $\langle (-1)^n\rangle$ times the generator of \textup{Lemma \ref{lem:generator2}}.
\end{lem}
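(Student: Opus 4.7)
My plan is to compute both the pullback $\psi_n^*$ and the Mayer--Vietoris connecting homomorphism explicitly at the level of cycles in the Rost--Schmid complex for $\K^{MW}_n$, using the explicit cycle representatives of Lemmas \ref{lem:generator1} and \ref{lem:generator2}.

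First, I will describe $\psi_n^*$ of the generator of Lemma \ref{lem:generator1}. The scheme-theoretic preimage $\tilde s := \psi_n^{-1}(s)$ is the closed subscheme of $V^0_{2n} \cap V^1_{2n}$ cut out by $x_1 = \cdots = x_{n-1} = 0$ (since $z$ is a unit on $V^0_{2n}$); it is integral of codimension $n-1$, and its closure $\bar Z$ in $Q_{2n}$ is defined by the same equations and is isomorphic to $Q_2 \times \A^{n-1}$, hence smooth. Under $\psi_n$, the coordinate $x_n$ on $Q_{2n-1}$ pulls back to $x_n/z$, and each conormal basis element $\bar x_i \in \mathfrak{m}_s/\mathfrak{m}_s^2$ pulls back to $\overline{x_i/z} = \bar z^{-1} \bar x_i$ in $\mathfrak{m}_{\tilde s}/\mathfrak{m}_{\tilde s}^2$. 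Sliding the resulting unit scalar $\bar z^{-(n-1)}$ across the tensor product via $u \cdot \omega \leftrightarrow \langle u \rangle \otimes \omega$, and using $\langle u^{-1} \rangle = \langle u \rangle$ in $\K^{MW}_0$, the pullback is represented at the generic point of $\tilde s$ by $\langle z^{n-1} \rangle\, [x_n/z] \otimes \bar x_1 \wedge \cdots \wedge \bar x_{n-1}$.

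Second, I will identify the Mayer--Vietoris connecting map via the short exact sequence of Rost--Schmid complexes
\[
0 \to C^*(Q_{2n}, \K^{MW}_n) \to C^*(V^0_{2n}, \K^{MW}_n) \oplus C^*(V^1_{2n}, \K^{MW}_n) \to C^*(V^0_{2n} \cap V^1_{2n}, \K^{MW}_n) \to 0,
\]
whose exactness reduces to the fact that any cycle on $V^0_{2n} \cap V^1_{2n}$ extends to $V^0_{2n}$ by taking closures of its supports. Lifting the pulled-back class by the pair $(a, 0)$, where $a$ is the same cycle viewed at the generic point of $\bar Z \cap V^0_{2n}$, the connecting image is represented by the Rost--Schmid differential $d a$ viewed as a cycle on $Q_{2n}$. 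Since $a$ restricts to a cycle on $V^0_{2n} \cap V^1_{2n}$, the support of $d a$ is constrained to codimension-one subvarieties of $\bar Z \cap V^0_{2n}$ contained in $\{z + 1 = 0\}$. Using the relation $x_n y_n = z(1+z)$, this locus decomposes into the two components $p_3 := E_n = \{x_1 = \cdots = x_n = 0,\, z = -1\}$ and $p_4 := \{x_1 = \cdots = x_{n-1} = y_n = 0,\, z = -1\}$.

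Third, I will compute the two candidate residues. At $p_4$, both $x_n$ and $z$ are units in $\mathcal{O}_{\bar Z, p_4}$, so $x_n/z$ is a unit and the Milnor--Witt residue $\partial_{p_4}([x_n/z])$ vanishes. At $p_3$, the element $x_n$ is a uniformizer of $\mathcal{O}_{\bar Z, E_n}$ (it differs from $z+1$ by the unit $z/y_n$, from the defining equation). Decomposing $[x_n/z] = [z^{-1}] + \langle z^{-1} \rangle [x_n]$ via the standard formula $[ab] = [b] + \langle b \rangle [a]$ and applying Morel's residue axioms $\partial_\pi([\pi]) = 1$, $\partial_\pi([u]) = 0$ for a unit $u$, together with the $\K^{MW}_0(\mathcal{O})$-linearity of $\partial_\pi$ on unit scalars, yields $\partial_{x_n}([x_n/z]) = \langle \bar z^{-1} \rangle = \langle -1 \rangle$ since $\bar z = -1$ at $E_n$. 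The prefactor $\langle z^{n-1} \rangle$ restricts to $\langle (-1)^{n-1} \rangle$, so the total scalar residue is $\langle (-1)^n \rangle$. The Rost--Schmid boundary wedges the existing conormal with $\bar x_n$ to produce $\bar x_1 \wedge \cdots \wedge \bar x_n$; since the Jacobian of the defining equation of $Q_{2n}$ restricts on $E_n$ to $\sum_i y_i\, dx_i + dz$, the class $\overline{z+1}$ is a combination of the $\bar x_i$ in the conormal of $E_n$ in $Q_{2n}$, so this top wedge coincides with the generator of Lemma \ref{lem:generator2}.

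The main obstacle in this plan is the careful bookkeeping in the Milnor--Witt residue formulas: Morel's $\partial_\pi$ depends on the uniformizer up to a scaling by $\langle \cdot \rangle$ that must be compensated by the conormal's orientation change, and the possibility of ``hidden'' factors of $\langle -1 \rangle$ coming from rewriting $\overline{z+1}$ in terms of $\bar x_n$ (they differ by $-\bar y_n$) must be accounted for. Tracking these scalings rigorously ensures that the final coefficient is exactly $\langle (-1)^n \rangle$, and not some other unit class in $\K^{MW}_0(k)$.
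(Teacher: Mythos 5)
Your proof is correct and follows essentially the same route as the paper's: pull back the explicit Rost--Schmid cycle from Lemma~\ref{lem:generator1} along $\psi_n$, identify the Mayer--Vietoris boundary as a residue supported on $E_n$, and evaluate at $z = -1$ to extract the coefficient $\langle (-1)^n\rangle$. Your write-up does more explicit bookkeeping than the paper's terse argument --- sliding the unit scalars $z^{-1}$ from the conormal to the $\K^{MW}_0$ coefficient via $u\,\omega \leftrightarrow \langle u\rangle \otimes \omega$ at the outset, explicitly ruling out the potential residue on the component $\{x_1=\cdots=x_{n-1}=y_n=0,\,z=-1\}$, and checking via the Jacobian of the quadric that $\bar x_1\wedge\cdots\wedge\bar x_n$ really is a basis of $\Lambda_t$ --- but the underlying computation is the same (the paper works instead through the local cohomology group $H^n_{1+z}(V^0_{2n},\K^{MW}_n)$ and keeps the $z$-factors in the conormal slot until the final substitution).
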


\begin{proof}
Pulling-back the generator of Lemma \ref{lem:generator1} to $V_{2n}^0\cap V_{2n}^1$ we find the cycle $[x_n/z]\otimes \overline {x_1/z}\wedge\ldots\wedge \overline x_{n-1}/z$. Its image under the boundary map $H^{n-1}_{\Zar}(V_{2n}^0\cap V_{2n}^1,\K_n^{MW})\to H^n_{1+z}(V_{2n}^0,\K_n^{MW})$ is the cycle $\langle 1\rangle\otimes \overline x_1/z\wedge\ldots\wedge \overline x_n/z$ in $\mathbf{K}_0^{MW}(k(t))\otimes_{\Z[k(t)^\times]}\Z[\Lambda_t^*])$. As $z=-1$ in $k(t)$, the above cycle corresponds to $\langle (-1)^n\rangle\otimes \overline x_1\wedge\ldots\wedge \overline x_n$, which can be seen as an element of $H^n_{1+z}(Q_{2n},\K_n^{MW})$. The result follows.
\end{proof}

\subsection{The concrete clutching construction}
We now apply the results of the abstract clutching construction with $Y = Q_{2n-1}$ and $X = Q_{2n} \cong \Sigma^1_s Y$.  Assume $k= \Z$ momentarily.  In that case, Theorem \ref{thm:abstractclutching} yields a surjective function
\[
cl_{GL_m}: \pi_0(\Sing^{\aone}GL_m(Q_{2n-1},1)) \longrightarrow \mathscr{V}_m(Q_{2n}).
\]
In particular, all rank $m$ vector bundles on $Q_{2n}$ are obtained from pointed morphisms from the quadric to $GL_m$.

We are particularly interested in the clutching function in the case where $m = n$.  The weak equivalence $Q_{2n} \cong {\pone}^{\sma n}$ of Theorem \ref{thm:main}, together with $\aone$-representability of algebraic K-theory \cite[\S 4 Theorem 3.13]{MV}, determines an isomorphism of reduced K-theory $\tilde{K}_0(Q_{2n}) \cong K_0(\Z) \cong \Z$. In fact if $k$ is (Noetherian) regular, then we conclude that $\tilde{K}_0(Q_{2n})$ is a free $K_0(k)$-module of rank $1$.\footnote{Note: these kinds of representability statements continue to hold if $k$ is not Noetherian.  As observed in \cite[Appendix C]{Hoyois}, algebraic K-theory is representable in the version of $\ho{k}$ we use.  Likewise, the proof of geometric representability (by the Grassmannian) goes through in this setting; one may modify \cite[Theorem 4]{SchlichtingTripathi} as necessitated by \cite[Remark 2]{SchlichtingTripathi} to check it when $k = \Z$ and then use base-change to treat the general case.}Our goal, accomplished in Theorem \ref{thm:generator}, is to describe an explicit {\em vector bundle} generator for this $K_0(k)$-module.

\begin{ex}
For small values of $n$, the generator of $\tilde{K}_0(Q_{2n})$ are given by classical geometric constructions.  For $n = 1$, the generator is simply the class of $\O(1)$, viewed as an element of $Pic(Q_2)$.  Alternatively, this line bundle is the line bundle associated with Hopf map $\eta: Q_3 \to Q_2$, which is a $\gm{}$-torsor.  Similarly, for $n = 2$, a generator is given by a Hopf bundle. Indeed, consider the associated vector bundle to the $SL_2$-torsor corresponding with the motivic Hopf map $\nu: Q_7 \to Q_4$.  The map $\nu$ can can be defined as follows: given a pair of $2 \times 2$-matrices $(M_1,M_2)$ such that $\det M_1 - \det M_2 = 1$, send $(M_1,M_2)$ to $(M_1M_2,\det M_2)$.  A corresponding generator can also be given for $Q_8$ in terms of the motivic Hopf map $\sigma$, which can be defined using split octonion algebras.
\end{ex}

\begin{rem}
For $n \geq 3$, it is easy to write down generators using the isomorphism $K_0(Q_{2n}) \cong G_0(Q_{2n})$ and taking the pushforward $i_*[\O_{Q_{2n}\setminus X_{2n}}]$, but an explicit projective resolution of this class yields only a virtual vector bundle representative.
\end{rem}

If ${\bf x} = (x_1,\ldots,x_n)$ and ${\bf y} = (y_1,\ldots,y_n)$ be elements of a ring $A$ such that ${\bf x}{\bf y}^t = 1$.  Attached to such a pair, Suslin inductively defined matrices $\alpha_n({\bf x},{\bf y}) \in GL_{2^{n-1}}$ \cite[\S 5]{SuslinStablyFree}.  The universal such example corresponds to a morphism $\alpha_n: Q_{2n-1} \to GL_{2^{n-1}}$.  By means of elementary row operations, the matrix $\alpha_n$ can be reduced to an invertible $n \times n$-matrix (non-uniquely).  Equivalently, the morphism $Q_{2n-1} \to GL_{2^{n-1}}$ lifts (non-uniquely) up to (naive) $\aone$-homotopy to a morphism $\beta_n: Q_{2n-1} \to GL_n$ \cite[p. p. 489 Point (b)]{SuslinStablyFree}.

\begin{rem}
A more topologically inclined reader may prefer the following description of the matrices $\alpha_n$.  Atiyah-Bott and Shapiro constructed an explicit generator of the topological K-theory group $\tilde{K}_0(S^n)$ in terms of Clifford algebras \cite{ABS}.  Suslin's matrix $\alpha$ can be thought of as an algebraic version of Clifford multiplication; this has been worked out in more detail in \cite{Chintala}.  The morphisms $\alpha_n$ (resp. $\beta_n$) are closely related to algebraic K-theory.  Indeed, for any integer $i$, there is an isomorphism of reduced K-theory $\tilde{K}_i(Q_{2n-1}) \cong \tilde{K}_{i-1}(\Z)$ using the fact that $Q_{2n-1} \cong \Sigma^{n-1}_s \gm{\sma n}$ and $\aone$-representability of algebraic $K$-theory.  Suslin showed \cite[Theorem 2.3]{SuslinMennicke} that this isomorphism is given by multiplication by $\alpha_n$ (or, equivalently, $\beta_n$); see \cite[\S 3]{AsokFaselKO} for more discussion of Suslin matrices in the context of $\aone$-homotopy theory.
\end{rem}

\begin{thm}
\label{thm:generator}
If $k$ is a (Noetherian) regular base ring, then for every $n \geq 1$, $cl_{GL_n}(\beta_n)$ determines a rank $n$ vector bundle on $Q_{2n}$; the stable isomorphism class of this vector bundle yields a generator of $\widetilde{K}_0(Q_{2n})$.
\end{thm}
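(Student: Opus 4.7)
The plan is to combine the abstract clutching construction of Theorem \ref{thm:abstractclutching} with Suslin's calculation identifying $\beta_n$ as a generator of $\tilde{K}_1(Q_{2n-1})$; the bridge is the compatibility of $cl_{GL_n}$ with the loop-suspension adjunction for algebraic $K$-theory.

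First I would record the identification $\tilde{K}_0(Q_{2n}) \cong \tilde{K}_1(Q_{2n-1})$ of free $K_0(k)$-modules of rank one, which arises from $Q_{2n} \sim_{\aone} \Sigma^1_s Q_{2n-1}$ (Theorem \ref{thm:main}), $\aone$-representability of algebraic $K$-theory, and the loop-suspension adjunction. That both groups are indeed free $K_0(k)$-modules of rank one is already recorded in the discussion preceding Theorem \ref{thm:generator} (for the left side) and follows from $Q_{2n-1} \sim_{\aone} S^{n-1}_s \wedge \gm{\sma n}$ together with the Bott periodicity of motivic $K$-theory in the direction $S^1_s \wedge \gm{}$ (for the right side).

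Next I would establish commutativity of the square
\[
\xymatrix{
\pi_0(\Sing^{\aone}GL_n(Q_{2n-1},1)) \ar[r]^-{cl_{GL_n}} \ar[d] & \mathscr{V}_n(Q_{2n}) \ar[d] \\
\tilde{K}_1(Q_{2n-1}) \ar[r]^-{\sim} & \tilde{K}_0(Q_{2n}),
}
\]
where the left vertical map is induced by post-composition with the stabilization $GL_n \hookrightarrow GL$ combined with the identification $[Q_{2n-1},GL]_{\aone}^{pt} \cong \tilde{K}_1(Q_{2n-1})$, and the right vertical map sends a rank $n$ vector bundle $\mathcal{E}$ to its reduced class $[\mathcal{E}] - n[\O]$. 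Both composites should agree with the map obtained by adjunction from $Q_{2n-1} \to GL_n \hookrightarrow GL \simeq \Omega^1_s BGL$, precomposed with the weak equivalence of Theorem \ref{thm:main}. The verification is largely a matter of unpacking the proofs of Lemmas \ref{lem:clutchingadjunction}, \ref{lem:connectivity} and \ref{lem:representability} and checking their naturality with respect to $BGL_n \to BGL$; the $\aone$-invariance of $H^1_{\Nis}(-,GL_n)$ required to apply Theorem \ref{thm:abstractclutching} holds over a regular Noetherian base by \cite[Theorem 5.2.1]{AHW}.

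To conclude, I would invoke Suslin's theorem \cite[Theorem 2.3]{SuslinMennicke} (reviewed in \cite[\S 3]{AsokFaselKO}), which states that $\beta_n$ (equivalently, the Suslin matrix $\alpha_n$) represents a $K_0(k)$-module generator of $\tilde{K}_1(Q_{2n-1})$; commutativity of the square then forces $[cl_{GL_n}(\beta_n)] - n[\O]$ to be a $K_0(k)$-module generator of $\tilde{K}_0(Q_{2n})$, as desired. The step I expect to be the main obstacle is precisely the commutativity of the square above: morally it is a naturality statement for the loop-suspension adjunction under the stabilization $BGL_n \to BGL$, but tracing this compatibility through the explicit fibrant replacement and adjunction manipulations in Lemmas \ref{lem:clutchingadjunction}--\ref{lem:representability} — and ensuring that the stabilization lifts to a morphism between the ``concrete'' simplicial models $\Sing^{\aone}GL_n$ and $\Sing^{\aone}GL$ used to represent the two sides — requires some care.
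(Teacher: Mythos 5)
Your approach is essentially the paper's: both proofs reduce to showing that the stabilization $cl_{GL}(\beta_n)$ is a generator of $\widetilde{K}_0(Q_{2n})$, using functoriality of the clutching construction along $GL_n \hookrightarrow GL$, compatibility of the clutching map with (loop-)suspension, and Suslin's theorem that $\beta_n$ generates $[Q_{2n-1},GL]_{\aone} \cong \widetilde{K}_1(Q_{2n-1})$. Your commutative square is exactly the unpacked form of what the paper compresses into ``functoriality of the clutching construction for the map $GL_n \to GL$, the fact that the clutching construction is given by suspension,'' so the key ideas match.

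One point to correct: you assert that the $\aone$-invariance hypothesis of Theorem \ref{thm:abstractclutching} ``holds over a regular Noetherian base by \cite[Theorem 5.2.1]{AHW}.'' That reference verifies $\aone$-invariance of $H^1_{\Nis}(-,GL_n)$ only when $k$ is smooth over a Dedekind domain with perfect residue fields, which is strictly narrower than (Noetherian) regular. The paper avoids this mismatch by performing the entire clutching construction over $\Spec\Z$ (where the hypothesis certainly holds) and then pulling back to $Q_{2n}$ over the arbitrary regular base $k$; the freeness of $\widetilde{K}_0(Q_{2n})$ as a $K_0(k)$-module and the compatibility of all the identifications with base change make this reduction harmless, and over $\Z$ it also simplifies the bookkeeping since one only needs $\widetilde{K}_0(Q_{2n}) \cong \Z$. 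You should insert this reduction to $\Spec\Z$ at the outset; with that fix your argument is complete and coincides with the paper's.
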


\begin{proof}
The first statement follows from Theorem \ref{thm:abstractclutching}(i).  Indeed, $cl_{GL_n}(\beta_n)$ gives a vector bundle on $Q_{2n}$ over $\Spec \Z$, which we may then pullback to $Q_{2n}$ over $\Spec k$.

For the second statement, recall that under these hypotheses $\widetilde{K}_0(Q_{2n})$ is a free $K_0(k)$-module of rank $1$.  By functoriality, it suffices to produce a generator over $\Spec \Z$.  In that case, we appeal to functoriality of the clutching construction for the map $GL_n \to GL$, the fact that the clutching construction is given by suspension and appeal to Suslin's results.  Indeed, it suffices to show that $cl_{GL}(\beta_n)$ is a generator of $\widetilde{K}_0(Q_{2n})$.  However, Suslin showed that the map $\beta_n: Q_{2n-1} \to GL$ is a generator of $[Q_{2n-1},GL]_{\aone} \cong \Z$; this is contained in \cite[Theorem 2.3]{SuslinMennicke}, but see \cite[Theorem 3.4.1]{AsokFaselKO} for a translation in this language.  The statement then follows from the suspension isomorphism in K-theory together with the identification $[Q_{2n-1},GL]_{\aone} \cong [Q_{2n},BGL]_{\aone} \cong \widetilde{K}_0(Q_{2n})$.
\end{proof}

\begin{rem}
The results of \cite[Proposition 3.3.3]{AsokFaselKO} together with obstruction theory arguments can be used to show that the generators of $\widetilde{K}_0(Q_{2n})$ described in Theorem \ref{thm:generator} admit reductions of structure group in certain situations: if $n \equiv 0 \mod 4$, then the vector bundle $cl_{GL_n}(\beta_n)$ admits a reduction of structure group to the split orthogonal group, while if $n \equiv 2 \mod 4$, then it admits a reduction of structure group to the symplectic group.
\end{rem}

Now, using Theorem \ref{thm:explicitequivalence}, we show how to refine Theorem \ref{thm:abstractclutching} to obtain explicit cocycles representing vector bundles on $Q_{2n}$ from the associated clutching functions.  To this end, assume that $k$ is an (infinite) perfect field, and consider the commutative diagram
\[
\xymatrix{
V_{2n}^0 \ar[d]& V_{2n}^0 \cap V_{2n}^1 \ar[r]\ar[l] \ar[d] & V_{2n}^1\ar[d] \\
\ast & Q_{2n-1} \ar[l]\ar[r] & \ast.
}
\]
The induced map of homotopy pushouts gives a reinterpretation of the map $\varphi_n$ and thus is an $\aone$-weak equivalence by Theorem \ref{thm:explicitequivalence}.  Moreover, the induced $\aone$-weak equivalence $Q_{2n} \to \Sigma^1_s Q_{2n-1}$ factors through a morphism $Q_{2n} \to \Sigma^1_s (V_{2n}^0 \cap V_{2n}^1) \to \Sigma^1_s Q_{2n-1}$.  The following result, which can be viewed as a refinement of some of the results in \cite[\S 6]{Nori10}, is our concrete version of the clutching construction in the case of $Q_{2n}$.

\begin{thm}
\label{thm:concreteclutching}
Assume $k$ is a field, and $n,r \geq 2$ are integers and $G$ is an isotropic reductive $k$-group (in the sense of \textup{\cite[Definition 3.3.4]{AHWII}}).
\begin{enumerate}[noitemsep,topsep=1pt]
\item If $k$ is infinite and perfect, given a pointed $k$-morphism $f: Q_{2n-1} \to G$, the $G$-torsor $cl_{G}(f)$ on $Q_{2n}$ attached to $f$ is obtained, up to isomorphism, by gluing the trivial $G$-torsor on $V_{2n}^0$ with a trivial $G$-torsor on $V_{2n}^1$ along the function  $f \circ \psi_n$.
\item For arbitrary $k$, the rank $n$ vector bundle on $Q_{2n}$ corresponding to $cl_{GL_n}(\beta_n)$ is that associated with $\beta_n \circ \psi_n$ by the construction of the previous point.
\end{enumerate}
\end{thm}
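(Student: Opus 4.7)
The plan is to combine the abstract clutching construction (Theorem \ref{thm:abstractclutching}) with the explicit realization of the $\aone$-weak equivalence $\varphi_n : Q_{2n} \isomto \Sigma^1_s Q_{2n-1}$ supplied by Theorem \ref{thm:explicitequivalence}. The commutative square of homotopy pushouts displayed just before the theorem identifies $\varphi_n$ with the map obtained by collapsing $V_{2n}^0$ and $V_{2n}^1$ to the base-point and replacing $V_{2n}^0 \cap V_{2n}^1$ by $Q_{2n-1}$ via $\psi_n$; since the Zariski square $\{V_{2n}^0, V_{2n}^1\}$ already presents $Q_{2n}$ as a homotopy pushout in $\Spc_k$, an explicit classifying map $Q_{2n} \to BG$ can be read off from this pushout presentation.

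For part (1), by construction $cl_G(f)$ is the pullback of the universal $G$-torsor along
\[
Q_{2n} \xrightarrow{\varphi_n} \Sigma^1_s Q_{2n-1} \xrightarrow{\Sigma^1_s f} \Sigma^1_s G \longrightarrow BG,
\]
where the last map is adjoint to $G \isomto \mathbf{R}\Omega^1_s BG$. Substituting the pushout presentation of $\varphi_n$, one sees that this composite is $\aone$-homotopic to the classifying map of the $G$-torsor glued from the trivial torsors on $V_{2n}^0$ and $V_{2n}^1$ along $f \circ \psi_n$: on each $V_{2n}^i$ the map factors through the base-point of $BG$, trivializing the pulled-back torsor, while on the overlap the composite realizes $f \circ \psi_n$ as the difference between these two trivializations. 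Since $G$ is isotropic reductive over an infinite field, Theorem \ref{thm:abstractclutching}(ii) and Lemma \ref{lem:representability}(2) identify $[Q_{2n}, BG]_{\aone}$ with $H^1_{\Nis}(Q_{2n}, G)$, so homotopy-level agreement produces an isomorphism of $G$-torsors.

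For part (2), the hypothesis of arbitrary $k$ prevents a direct appeal to Theorem \ref{thm:explicitequivalence}, but the key observation is that every piece of data --- $Q_{2n-1}$, $Q_{2n}$, the subschemes $V_{2n}^i$, the map $\psi_n$, and the matrix $\beta_n$ --- is defined over $\Spec \Z$, and both the abstract clutching $cl_{GL_n}(\beta_n)$ (available over $\Spec \Z$ by Theorem \ref{thm:abstractclutching}(i)) and the explicit gluing construction commute with base change. Base change to $\Spec \Q$ reduces the identification to part (1), and the plan to descend back to $\Spec \Z$ (and then to arbitrary $k$) is to argue at the level of \v{C}ech cocycles on the cover $\{V_{2n}^0, V_{2n}^1\}$: the cocycle $\beta_n \circ \psi_n$ literally realizes the restriction to the overlap of the composite defining $cl_{GL_n}(\beta_n)$, so by functoriality of the clutching construction the two cocycles agree up to coboundary in $H^1_{\Zar}(Q_{2n,\Z}, GL_n)$, and base change yields the result over any $k$.

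The main obstacle is this descent step: the $\aone$-homotopy-level identification in part (1) is strongest over a field, while matching specific \v{C}ech cocycles over $\Spec \Z$ requires that the pushout-theoretic description of the classifying map remains valid at the cocycle level over a non-field base. This should follow from naturality of the pushout construction together with the fact that the Zariski cover $\{V_{2n}^0, V_{2n}^1\}$ is already a Nisnevich distinguished square over $\Spec \Z$, but careful bookkeeping of base-points and of the identifications coming from the Mayer--Vietoris cofiber sequence will be required.
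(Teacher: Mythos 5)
Part (1) of your proposal is essentially the paper's argument: combine the functoriality assertion of Theorem \ref{thm:abstractclutching} with the explicit weak equivalence of Theorem \ref{thm:explicitequivalence}, reading off the cocycle from the factorization of $\varphi_n$ through $\Sigma^1_s(V_{2n}^0\cap V_{2n}^1)$; the account of how the homotopically constant maps on $V_{2n}^i$ correspond to trivializations glued along $f\circ\psi_n$ matches the paper.

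However, your strategy for Part (2) has a genuine gap, and you half-acknowledge it yourself.  You propose to base change to $\Spec \Q$ to invoke Part (1), then ``descend back to $\Spec\Z$'' and finally base change to arbitrary $k$.  Two things go wrong.  First, $\Spec\Q \to \Spec\Z$ is not a covering in any topology one can use for descent; there is no descent mechanism that carries a cocycle identification over $\Q$ back to $\Z$, and matching \v{C}ech cocycles ``up to coboundary'' over $\Q$ tells you nothing over $\Z$.  Second, the theorem asserts Part (2) for an \emph{arbitrary} field $k$ (this is the whole point, since Part (1) requires $k$ infinite and perfect), and if $k$ has characteristic $p>0$ there is no map $\Spec k \to \Spec\Q$ at all, so the reduction never gets started.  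The paper avoids both problems by working over the prime field $F\subset k$ (which is always perfect but may be finite), passing to an algebraic closure $\bar{F}$, applying Part (1) and Theorem \ref{thm:generator} over $\bar{F}$ (which is infinite and perfect), and then using Galois descent for the faithfully flat cover $\Spec\bar{F}\to\Spec F$ to land the cocycle over $F$; the result over $k$ is then obtained by base change along $F\hookrightarrow k$.  You should replace your $\Z\to\Q\to\Z$ step with this prime-field/Galois-descent argument.
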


\begin{proof}
Here the assumption that $k$ is infinite is used in conjunction with the assumption that $G$ is isotropic to appeal to Theorem \ref{thm:abstractclutching}(ii).  Considering the diagram above, point (1) follows from the functoriality assertion in Theorem \ref{thm:abstractclutching} combined with Theorem \ref{thm:explicitequivalence}.  In more detail, given a vector bundle on $[Q_{2n},BG]_{\aone}$, we can lift it to a pointed homotopy class $[(Q_{2n},0),(BG,\ast)]_{\aone}$.  Since the weak equivalence $Q_{2n} \to \Sigma^1_s Q_{2n-1}$ factors through $V_{2n}^0 \cap V_{2n}^1$, every element of $[(Q_{2n},0),(BG,\ast)]_{\aone}$ corresponds to an element of $[\Sigma^1_s Q_{2n-1},(BG,\ast)]_{\aone}$ and thus determines an element of $[\Sigma^1_s(V_{2n}^0 \cap V_{2n}^1),(BG,\ast)]_{\aone}$.  The adjoints to these elements are precisely a pointed map $f: Q_{2n-1} \to G$ and the composite map $f \circ \psi_n$.

The clutching construction proceeds by composing $\Sigma^1_s f$ with the canonical morphism $\Sigma^1_s G \to BG$.  The factorization through $V_{2n}^0 \cap V_{2n}^1$ in the previous paragraph, then corresponds, by contemplating the diagram before the statement, to gluing two homotopically constant maps $V_{2n}^i \to \ast \to BG$, i.e., trivial $G$-torsors on $V_{2n}^i$ along the morphism $f \circ \psi_n$ as claimed.

For Point (2), begin by observing that the function $cl_{GL_n}(\beta_n)$ over the field $k$ is obtained by base-change from a morphism of schemes over $\Spec \Z$.  In particular, the function $cl_{GL_n}(\beta_n)$ over $k$ is obtained by extending scalars from a function defined over the prime field $F \subset k$, which is perfect.  Now, since an algebraic closure $\bar{F}$ of the prime field is infinite and perfect, Point (2) follows by combining Theorem \ref{thm:generator} with the conclusion of Point (1).  However, by Galois descent, there is an equivalence between vector bundles over $F$ and $Gal(\bar{F}/F)$-invariant cocycles over $\bar{F}$.  Therefore, the vector bundle corresponding to $cl_{GL_n}(\beta_n)$ over $\bar{F}$ descends to a vector bundle over $F$.  Pulling back this bundle to $k$ yields the required vector bundle.
\end{proof}

\begin{rem}
\label{rem:descent}
Note that Point (2) makes sense even if the intersection $V_{2n}^0 \cap V_{2n}^1$ is empty.  More generally, using faithfully flat descent techniques, it is possible to build $G$-torsors over fields that arise from clutching functions given by $k$-morphisms $f: Q_{2n-1} \to G$ defined over perfect subfields $F \subset k$.  For example, if $k$ is a finite field, every $G$-torsor over $Q_{2n}$ under an isotropic reductive $G$ is described by means of a clutching function $Q_{2n-1} \to G$.  Lacking any applications of the result in this form at the moment, we leave the proof of this fact to the reader.
\end{rem}

\begin{ex}
\label{ex:Hopfbundle}
Unwinding the definitions, the Suslin matrix $Q_3 \to SL_2$ corresponds to the identity map $SL_2 \to SL_2$ under the isomorphism of $Q_3$ with $SL_2$ given by sending $(x_1,x_2,y_1,y_2)$ to the $2 \times 2$-matrix, $\begin{pmatrix} x_1 & x_2 \\ -y_2 & y_1 \end{pmatrix}$.  From this, one can see that the resulting vector bundle on $Q_4$ is precisely the Hopf bundle.
\end{ex}

We close with one final application of our results.  Recall that, since the inclusion $X_{2n} \hookrightarrow Q_{2n}$ has complement of codimension $n$, whenever $n \geq 2$ the restriction functor on categories of vector bundles is fully-faithful \cite[Lemma 2.4]{ADBundle}.  Combining this fact with Theorem \ref{thm:generator} one obtains the following result, which is a generalization of \cite[Corollary 3.1]{ADBundle}.

\begin{cor}
If $k$ is a (Noetherian) regular base ring, then for every $n \geq 2$, the $\aone$-contractible $k$-scheme $X_{2n}$ has a non-trivial vector bundle.
\end{cor}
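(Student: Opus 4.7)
The plan is to reduce the corollary to a direct combination of Theorem~\ref{thm:generator} with the full-faithfulness statement \cite[Lemma 2.4]{ADBundle} cited just before the statement. Concretely, Theorem~\ref{thm:generator} provides, for each $n\geq 1$, an explicit rank $n$ vector bundle $\mathcal{E}_n := cl_{GL_n}(\beta_n)$ on $Q_{2n}$ whose class generates the free $K_0(k)$-module $\widetilde{K}_0(Q_{2n})$ of rank one; in particular $[\mathcal{E}_n]\neq 0$ in $\widetilde{K}_0(Q_{2n})$, so $\mathcal{E}_n$ is not stably (and hence not) isomorphic to the trivial bundle $\O_{Q_{2n}}^{n}$.

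First I would restrict $\mathcal{E}_n$ along the open immersion $j\colon X_{2n}\hookrightarrow Q_{2n}$ to obtain a rank $n$ vector bundle $j^{*}\mathcal{E}_n$ on the $\aone$-contractible scheme $X_{2n}$. Since $E_n\subset Q_{2n}$ has codimension $n\geq 2$ and $Q_{2n}$ is smooth (hence regular), \cite[Lemma 2.4]{ADBundle} gives that the restriction functor from vector bundles on $Q_{2n}$ to vector bundles on $X_{2n}$ is fully faithful: for any two vector bundles $\mathcal{E},\mathcal{F}$ on $Q_{2n}$, the natural map
\[
\hom_{Q_{2n}}(\mathcal{E},\mathcal{F})\longrightarrow \hom_{X_{2n}}(j^{*}\mathcal{E},j^{*}\mathcal{F})
\]
is a bijection (this is essentially the statement that pushforward along $j$ for coherent sheaves preserves reflexivity in codimension $\geq 2$, so $j_{*}j^{*}\mathcal{E}\cong \mathcal{E}$).

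Consequently, if $j^{*}\mathcal{E}_n$ were trivial, i.e.\ isomorphic to $\O_{X_{2n}}^{n}$, the mutually inverse isomorphisms $j^{*}\mathcal{E}_n\rightleftarrows j^{*}\O_{Q_{2n}}^{n}$ would lift uniquely to mutually inverse morphisms $\mathcal{E}_n\rightleftarrows \O_{Q_{2n}}^{n}$ on $Q_{2n}$, contradicting the non-triviality of $\mathcal{E}_n$ established above. Hence $j^{*}\mathcal{E}_n$ is a non-trivial rank $n$ vector bundle on $X_{2n}$, proving the corollary. There is no serious obstacle here: the only non-formal inputs are Theorem~\ref{thm:generator} and the cited codimension-$\geq 2$ extension lemma, both of which have already been established.
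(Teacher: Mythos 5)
Your proof is correct and is exactly the argument the paper has in mind: restrict the explicit generator $cl_{GL_n}(\beta_n)$ of $\widetilde{K}_0(Q_{2n})$ from Theorem~\ref{thm:generator} to $X_{2n}$ and use full faithfulness of restriction in codimension $\geq 2$ (from \cite[Lemma 2.4]{ADBundle}) to conclude the restriction cannot be trivial. The paper simply states the corollary as an immediate consequence of these two inputs; you have filled in the same details.
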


\begin{footnotesize}
\bibliographystyle{alpha}
\bibliography{quadrics}

\begin{thebibliography}{AHW15b}

\bibitem[ABS64]{ABS}
M.~F. Atiyah, R.~Bott, and A.~Shapiro.
\newblock Clifford modules.
\newblock {\em Topology}, 3(suppl. 1):3--38, 1964.

\bibitem[AD07]{ADContractible}
A.~Asok and B.~Doran.
\newblock Unipotent groups and some {${\mathbb A}^1$}-contractible smooth
  schemes.
\newblock {\em Int. Math. Res. Pap.}, 5, 2007.
\newblock Art. ID rpm005.

\bibitem[AD08]{ADBundle}
A.~Asok and B.~Doran.
\newblock Vector bundles on contractible smooth schemes.
\newblock {\em Duke Math. J.}, 143(3):513--530, 2008.

\bibitem[AF14a]{AsokFaselSpheres}
A.~Asok and J.~Fasel.
\newblock Algebraic vector bundles on spheres.
\newblock {\em J. Topology}, 7(3):894--926, 2014.
\newblock doi:10.1112/jtopol/jtt046.

\bibitem[AF14b]{AsokFaselThreefolds}
A.~Asok and J.~Fasel.
\newblock A cohomological classification of vector bundles on smooth affine
  threefolds.
\newblock {\em Duke Math. J.}, 163(14):2561--2601, 2014.

\bibitem[AF14c]{AsokFaselKO}
A.~Asok and J.~Fasel.
\newblock An explicit {${\mathbf {KO}}$}-degree map and applications.
\newblock {\em Preprint}, available at \url{http://arxiv.org/abs/1403.4588},
  2014.

\bibitem[AF15]{AsokFaselmotiviccohomotopy}
A.~Asok and J.~Fasel.
\newblock Euler class groups are motivic cohomotopy groups.
\newblock {\em In preparation}, 2015.

\bibitem[AHW15a]{AHW}
A.~Asok, M.~Hoyois, and M.~Wendt.
\newblock Affine representability results in {${\mathbb A}^1$}-homotopy theory
  {I}: vector bundles.
\newblock {\em Preprint}, available at \url{http://arxiv.org/abs/1506.07093},
  2015.

\bibitem[AHW15b]{AHWII}
A.~Asok, M.~Hoyois, and M.~Wendt.
\newblock Affine representability results in {${\mathbb A}^1$}-homotopy theory
  {II}: principal bundles and homogeneous spaces.
\newblock {\em Preprint}, available at \url{http://arxiv.org/abs/1507.08020},
  2015.

\bibitem[Bor91]{Borel}
A.~Borel.
\newblock {\em Linear algebraic groups}, volume 126 of {\em Graduate Texts in
  Mathematics}.
\newblock Springer-Verlag, New York, 1991.

\bibitem[Chi15]{Chintala}
V.~Chintala.
\newblock On {S}uslin matrices and their connection to spin groups.
\newblock {\em Doc. Math.}, 20:531--550, 2015.

\bibitem[CTHK97]{CTHK}
J.-L. Colliot-Th{\'e}l{\`e}ne, R.~T. Hoobler, and B.~Kahn.
\newblock The {B}loch-{O}gus-{G}abber theorem.
\newblock In {\em Algebraic {$K$}-theory ({T}oronto, {ON}, 1996)}, volume~16 of
  {\em Fields Inst. Commun.}, pages 31--94. Amer. Math. Soc., Providence, RI,
  1997.

\bibitem[Cur71]{Curtis}
E.~B. Curtis.
\newblock Simplicial homotopy theory.
\newblock {\em Advances in Math.}, 6:107--209 (1971), 1971.

\bibitem[DI08]{DuggerIsaksenHopf}
D.~Dugger and D.~Isaksen.
\newblock {\em Unpublished work}, 2008.

\bibitem[Fas11]{Fasel08b}
J.~Fasel.
\newblock Some remarks on orbit sets of unimodular rows.
\newblock {\em Comment. Math. Helv.}, 86(1):13--39, 2011.

\bibitem[Fas15]{FaselMurthy}
J.~Fasel.
\newblock On the number of generators of ideals in polynomial rings.
\newblock {\em Preprint}, available at \url{http://arxiv.org/abs/1507.05734},
  2015.

\bibitem[Gro61]{EGAIII1}
A.~Grothendieck.
\newblock \'{E}l\'ements de g\'eom\'etrie alg\'ebrique. {III}. \'{E}tude
  cohomologique des faisceaux coh\'erents. {I}.
\newblock {\em Inst. Hautes \'Etudes Sci. Publ. Math.}, 11:167, 1961.

\bibitem[Gup13]{Gupta2}
N.~Gupta.
\newblock On {Z}ariski's cancellation problem in positive characteristic.
\newblock {\em Adv. Math.}, 264:296--307, 2013.

\bibitem[Gup14]{Gupta}
N.~Gupta.
\newblock On the cancellation problem for the affine space {$\Bbb{A}^3$} in
  characteristic {$p$}.
\newblock {\em Invent. Math.}, 195(1):279--288, 2014.

\bibitem[Har66]{HartshorneRD}
R.~Hartshorne.
\newblock {\em Residues and duality}.
\newblock Lecture notes of a seminar on the work of A. Grothendieck, given at
  Harvard 1963/64. With an appendix by P. Deligne. Lecture Notes in
  Mathematics, No. 20. Springer-Verlag, Berlin, 1966.

\bibitem[Hov99]{Hovey}
M.~Hovey.
\newblock {\em Model {C}ategories}, volume~63 of {\em Math. {S}urveys and
  {M}onographs}.
\newblock American Mathematical Society, Providence, RI, 1999.

\bibitem[Hoy14]{Hoyois}
M.~Hoyois.
\newblock A quadratic refinement of the {G}rothendieck-{L}efschetz-{V}erdier
  trace formula.
\newblock {\em Algebr. Geom. Topol.}, 14(6):3603--3658, 2014.

\bibitem[Lur11]{DAGXI}
J.~Lurie.
\newblock Derived algebraic geometry {XI}: Descent theorems.
\newblock 2011.
\newblock {\em Preprint,} available at
  \url{http://www.math.harvard.edu/~lurie/papers/DAG-XI.pdf}.

\bibitem[Mor05]{MStable}
F.~Morel.
\newblock The stable {${\mathbb A}^1$}-connectivity theorems.
\newblock {\em $K$-Theory}, 35(1-2):1--68, 2005.

\bibitem[Mor08]{Morelquadrics}
F.~Morel.
\newblock {\em Personal communication} dated {April 3}, 2008.

\bibitem[Mor12]{MField}
F.~Morel.
\newblock {\em {${\mathbb A}^1$}-algebraic topology over a field}, volume 2052
  of {\em Lecture Notes in Mathematics}.
\newblock Springer, Heidelberg, 2012.

\bibitem[MV99]{MV}
F.~Morel and V.~Voevodsky.
\newblock {${\mathbb A}^1$}-homotopy theory of schemes.
\newblock {\em Inst. Hautes \'Etudes Sci. Publ. Math.}, 90:45--143 (2001),
  1999.

\bibitem[MVW06]{MVW}
C.~Mazza, V.~Voevodsky, and C.~Weibel.
\newblock {\em Lecture notes on motivic cohomology}, volume~2 of {\em Clay
  Mathematics Monographs}.
\newblock American Mathematical Society, Providence, RI, 2006.

\bibitem[NRS10]{Nori10}
M.~V. Nori, R.~A. Rao, and R.~G. Swan.
\newblock Non-self-dual stably free modules.
\newblock In J.-L. Colliot-Th{\'e}l{\`e}ne, S.~Garibaldi, R.~Sujatha, and
  V.~Suresh, editors, {\em Quadratic forms, linear algebraic groups, and
  cohomology}, volume~18 of {\em Dev. Math.}, pages 315--324. Springer, New
  York, 2010.

\bibitem[PW10]{PaninWalterPontryaginClasses}
I.~Panin and C.~Walter.
\newblock Quaternionic grassmannians and pontryagin classes in algebraic
  geometry.
\newblock {\em Preprint} available at \url{http://arxiv.org/abs/1011.0649},
  2010.

\bibitem[Sch15]{Schlichting}
M.~Schlichting.
\newblock Euler class groups, and the homology of elementary and special linear
  groups.
\newblock {\em Preprint}, available at \url{http://arxiv.org/abs/1502.05424},
  2015.

\bibitem[ST15]{SchlichtingTripathi}
M.~Schlichting and G.~S. Tripathi.
\newblock Geometric models for higher {G}rothendieck-{W}itt groups in
  {$\Bbb{A}^1$}-homotopy theory.
\newblock {\em Math. Ann.}, 362(3-4):1143--1167, 2015.

\bibitem[Sus77]{SuslinStablyFree}
A.~A. Suslin.
\newblock Stably free modules.
\newblock {\em Mat. Sb. (N.S.)}, 102(144)(4):537--550, 632, 1977.

\bibitem[Sus82]{SuslinMennicke}
A.~A. Suslin.
\newblock Mennicke symbols and their applications in the {$K$}-theory of
  fields.
\newblock In {\em Algebraic {$K$}-theory, {P}art {I} ({O}berwolfach, 1980)},
  volume 966 of {\em Lecture Notes in Math.}, pages 334--356. Springer,
  Berlin-New York, 1982.

\bibitem[Voe03a]{VMod2}
V.~Voevodsky.
\newblock Motivic cohomology with {${\bf Z}/2$}-coefficients.
\newblock {\em Publ. Math. Inst. Hautes \'Etudes Sci.}, 98:59--104, 2003.

\bibitem[Voe03b]{VRed}
V.~Voevodsky.
\newblock Reduced power operations in motivic cohomology.
\newblock {\em Publ. Math. Inst. Hautes \'Etudes Sci.}, 98:1--57, 2003.

\bibitem[Voe10]{VCancellation}
V.~Voevodsky.
\newblock Cancellation theorem.
\newblock {\em Doc. Math.}, Extra volume: Andrei A. Suslin sixtieth
  birthday:671--685, 2010.

\bibitem[Woo93]{WoodQuad}
R.M.W. Wood.
\newblock Polynomial maps of affine quadrics.
\newblock {\em Bull. London Math. Soc.}, 25(5):491--497, 1993.

\end{thebibliography}
\end{footnotesize}
\Addresses
\end{document}